\numberwithin{equation}{section}
\newtheorem{Theorem}{Theorem}[section]
\newtheorem{Corollary}[Theorem]{Corollary}
\newtheorem{Lemma}[Theorem]{Lemma}
\newtheorem{Proposition}[Theorem]{Proposition}
 { \theoremstyle{definition}
\newtheorem{Definition}[Theorem]{Definition}
\newtheorem{Remark}[Theorem]{Remark} }
\newcommand{\nbige}{\mathcal{E}}
\newcommand{\nbigf}{\mathcal{F}}
\newcommand{\nbigl}{\mathcal{L}}
\newcommand{\nbigm}{\mathcal{M}}
\newcommand{\nbigo}{\mathcal{O}}
\newcommand{\nbigu}{\mathcal{U}}
\newcommand{\nbigv}{\mathcal{V}}
\newcommand{\seisuu}{{\mathbb Z}}
\newcommand{\cnum}{{\mathbb C}}
\newcommand{\real}{{\mathbb R}}
\newcommand{\gbigl}{\mathfrak L}
\newcommand{\gbigt}{\mathfrak T}
\newcommand{\gminia}{\mathfrak a}
\newcommand{\gminit}{\mathfrak t}
\newcommand{\gminiv}{\mathfrak v}
\newcommand{\vecx}{{\boldsymbol x}}
\newcommand{\vectau}{{\boldsymbol \tau}}
\newcommand{\lrarr}{\longrightarrow}
\def\End{\mathop{\rm End}\nolimits}
\def\Image{\mathop{\rm Im}\nolimits}
\def\Re{\mathop{\rm Re}\nolimits}
\def\rank{\mathop{\rm rank}\nolimits}
\def\length{\mathop{\rm length}\nolimits}
\def\Tr{\mathop{\rm Tr}\nolimits}
\def\vol{\mathop{\rm vol}\nolimits}
\def\dvol{\mathop{\rm dvol}\nolimits}
\def\id{\mathop{\rm id}\nolimits}
\newcommand{\del}{\partial}
\newcommand{\delbar}{\overline{\del}}
\newcommand{\barz}{\overline{z}}
\newcommand{\zbar}{\barz}
\newcommand{\baralpha}{\overline{\alpha}}
\newcommand{\alphabar}{\baralpha}
\newcommand{\vecnbigl}{{\boldsymbol{\mathcal L}}}
\newcommand{\closedopen}[2]{[#1,#2[}
\newcommand{\Etilde}{\widetilde{E}}
\newcommand{\nablatilde}{\widetilde{\nabla}}
\newcommand{\wbar}{\overline{w}}
\newcommand{\htilde}{\widetilde{h}}
\newcommand{\Utilde}{\widetilde{U}}
\def\DR{\mathop{\rm DR}\nolimits}
\def\cov{\mathop{\rm cov}\nolimits}
\def\an{\mathop{\rm an}\nolimits}
\def\KS{\mathop{\rm KS}\nolimits}
\newcommand{\ubar}{\overline{u}}
\newcommand{\phitilde}{\widetilde{\phi}}
\newcommand{\gammatilde}{\widetilde{\gamma}}
\newcommand{\Gammatilde}{\widetilde{\Gamma}}
\newcommand{\gammabar}{\overline{\gamma}}
\newcommand{\Btilde}{\widetilde{B}}
\newcommand{\tte}{{\tt e}}
\begin{document}
\allowdisplaybreaks

\newcommand{\arXivNumber}{1903.03264}

\renewcommand{\thefootnote}{}

\renewcommand{\PaperNumber}{048}

\FirstPageHeading

\ShortArticleName{Triply Periodic Monopoles and Difference Modules on Elliptic Curves}

\ArticleName{Triply Periodic Monopoles and Difference Modules\\ on Elliptic Curves\footnote{This paper is a~contribution to the Special Issue on Integrability, Geometry, Moduli in honor of Motohico Mulase for his 65th birthday. The full collection is available at \href{https://www.emis.de/journals/SIGMA/Mulase.html}{https://www.emis.de/journals/SIGMA/Mulase.html}}}

\Author{Takuro MOCHIZUKI}

\AuthorNameForHeading{T.~Mochizuki}

\Address{Research Institute for Mathematical Sciences, Kyoto University, Kyoto, 606-8502, Japan}
\Email{\href{mailto:takuro@kurims.kyoto-u.ac.jp}{takuro@kurims.kyoto-u.ac.jp}}

\ArticleDates{Received October 29, 2019, in final form May 18, 2020; Published online June 03, 2020}

\Abstract{We explain the correspondences between twisted monopoles with Dirac type singularity and polystable twisted mini-holomorphic bundles with Dirac type singularity on a 3-dimensional torus. We also explain that they are equivalent to polystable parabolic twisted difference modules on elliptic curves.}

\Keywords{twisted monopoles; twisted difference modules; twisted mini-holomorphic bundles; Kobayashi--Hitchin correspondence}

\Classification{53C07; 58E15; 14D21; 81T13}

\renewcommand{\thefootnote}{\arabic{footnote}}
\setcounter{footnote}{0}

\section{Introduction}

We studied the Kobayashi--Hitchin correspondences for singular monopoles with periodicity
in one direction~\cite{Mochizuki-difference-modules}
or two directions~\cite{Mochizuki-q-difference-modules}.
In this paper, we study singular monopoles with periodicity in three directions.
In the analytic aspect, this case is much simpler than the other cases
because a $3$-dimensional torus is compact.
But, there still exist interesting correspondences
with algebro-geometric objects.
Moreover, everything is generalized
to the twisted case. (See Section~\ref{section;19.10.17.10} for the twisted objects.)
Though we also study a generalization to the twisted case,
this introduction is devoted to explain the results in the untwisted case.

\subsection{Triply periodic monopoles with Dirac type singularity}

Let $Y$ be an oriented $3$-dimensional $\real$-vector space
with an Euclidean metric $g_Y$.
Let $\Gamma$ be a~lattice of $Y$.
We set $\nbigm:=Y/\Gamma$,
which is equipped with the induced metric
$g_{\nbigm}$.
Let $Z$ be a~finite subset of $\nbigm$.
Let $E$ be a~$C^{\infty}$-vector bundle on $\nbigm\setminus Z$
with a Hermitian metric $h$,
a~unitary connection $\nabla$
and an anti-self-adjoint endomorphism $\phi$.
A tuple $(E,h,\nabla,\phi)$ is called a monopole
on $\nbigm\setminus Z$
if the Bogomolny equation
\[
 F(\nabla)=\ast\nabla\phi
\]
is satisfied, where $F(\nabla)$ denotes the curvature of $\nabla$,
and $\ast$ denotes the Hodge star operator
with respect to $g_{\nbigm}$.
A point of $P\in Z$ is called a Dirac type singularity of
the monopole $(E,h,\nabla,\phi)$
if $|\phi_Q|_h=O\big(d(Q,Z)^{-1}\big)$ for any $Q\in\nbigm\setminus Z$,
where $\phi_Q$ denotes the element of
the fiber $\End(E)_{|Q}$ over $Q$ induced by $\phi$,
and $d(Q,Z)$ denotes the distance between $Q$ and $Z$.
 Note that the notion of Dirac type singularity was
originally introduced by
Kronheimer~\cite{Kronheimer-Master-Thesis}.
The above condition is equivalent to the original definition,
according to~\cite{Mochizuki-Yoshino}.

\subsection{Mini-holomorphic bundles with Dirac type singularity}\label{subsection;19.3.2.20}

Let us explain a correspondence between monopoles with Dirac type singularity and polystable mini-holomorphic bundles with Dirac type singularity on a $3$-dimensional torus. (See Section~\ref{section;19.10.17.10} below for more details on the notions of mini-complex structures
and mini-holomorphic bundles with Dirac type singularity on $3$-dimensional manifolds.) It was formulated by Kontsevich and Soibelman~\cite{Kontsevich-Soibelman}.

\subsubsection{Mini-complex structure}

We take a linear coordinate system $(x_1,x_2,x_3)$ on $Y$
compatible with the orientation such that $g_Y=\sum dx_i\,dx_i$,
and we set $t:=x_1$ and $w=x_2+\sqrt{-1}x_3$.
The coordinate system induces a~mini-complex structure
on $\nbigm\setminus Z$.
A $C^{\infty}$-function~$f$ on an open subset of $\nbigm$
is called mini-holomorphic
if $\del_tf=\del_{\wbar}f=0$.
Let $\nbigo_{\nbigm\setminus Z}$ denote the sheaf of
mini-holomorphic functions on~$\nbigm\setminus Z$.

\subsubsection{Mini-holomorphic bundles with Dirac type singularity}

Let $\nbigv$ be a locally free $\nbigo_{\nbigm\setminus Z}$-module.
Let $P$ be a point of~$Z$.
We take a lift $(t_0,w_0)\in Y$ of~$P$.
Let $\epsilon$ and $\delta$ denote small positive numbers.
Set
$B_{w_0}(\delta):=
 \bigl\{w\in\cnum\,\big|\,
 |w-w_0|<\delta
 \bigr\}$ and
$B_{w_0}^{\ast}(\delta):=
 \bigl\{w\in\cnum\,\big|\,
 0<|w-w_0|<\delta
 \bigr\}$.
For any $t_0-\epsilon<t<t_0+\epsilon$,
the restriction
$\nbigv_{|\{t\}\times B^{\ast}_{w_0}(\delta)}$
is naturally a locally free $\nbigo_{B^{\ast}_{w_0}(\delta)}$-module.
If $t\neq t_0$,
they extend to locally free $\nbigo_{B_{w_0}(\delta)}$-modules
$\nbigv_{|\{t\}\times B_{w_0}(\delta)}$.
Because mini-holomorphic functions are constant
in the $t$-direction,
we obtain an isomorphism of
$\nbigo_{B^{\ast}_{w_0}(\delta)}$-modules
$\nbigv_{|\{t_0-\epsilon_1\}\times B^{\ast}_{w_0}(\delta)}
\simeq
 \nbigv_{|\{t_0+\epsilon_1\}\times B^{\ast}_{w_0}(\delta)}$
for $0<\epsilon_1<\epsilon$.
If it is meromorphic at $w_0$,
then $P$ is called a Dirac type singularity of $\nbigv$.
If every point of $Z$ is Dirac type singularity,
then $\nbigv$ is called a mini-holomorphic bundle
with Dirac type singularity on~$(\nbigm;Z)$.

\subsubsection{Stability condition}

Kontsevich and Soibelman \cite{Kontsevich-Soibelman}
introduced a sophisticated way
to define a stability condition for
mini-holomorphic bundles
with Dirac type singularity on $(\nbigm;Z)$.

Let $H^j(\nbigm\setminus Z)$
denote the $j$-th cohomology group of $\nbigm\setminus Z$
with $\real$-coefficient.
Let $H_j(\nbigm,Z)$ denote the relative $j$-th homology group
of $(\nbigm,Z)$ with $\real$-coefficient.
Note that there exists the natural isomorphism
\[
 \Phi_Z\colon \ H^2(\nbigm\setminus Z) \simeq H_1(\nbigm,Z).
\]

Let $\gbigt$ denote the space of left invariant vector fields on $\nbigm$,
and let $\gbigt^{\lor}$ denote the left invariant $1$-forms on $\nbigm$.
Let $\sigma$ denote the image of $1$ via the canonical morphism
$\real\lrarr\gbigt\otimes\gbigt^{\lor}$.
It is described as
$\sigma=\sum_{i=1,2,3} \del_{x_i}\otimes dx_i$.

For any mini-holomorphic bundle with Dirac type singularity
$\nbigv$ on $(\nbigm;Z)$,
we obtain
$c_1(\nbigv)\in H^2(\nbigm\setminus Z)$,
and hence
$\Phi_Z(c_1(\nbigv))\in H_1(\nbigm,Z)$.
Then, we obtain the following invariant vector field
\[
\int_{\Phi_Z(c_1(\nbigv))}
 \sigma=
 \sum_{i=1,2,3}
 \left(
 \int_{\Phi_Z(c_1(\nbigv))}dx_i
 \right)\del_{x_i}
\in\gbigt.
\]
Kontsevich and Soibelman discovered that
$\int_{\Phi_Z(c_1(\nbigv))}\sigma$
is a scalar multiplication of $\del_t=\del_{x_1}$,
and they define the degree
$\deg^{\KS}(\nbigv)$ for $\nbigv$
as follows
\[
 \int_{\Phi_Z(c_1(\nbigv))}\sigma=\deg^{\KS}(\nbigv)\del_t.
\]
They introduced the following stability condition.

\begin{Definition}
A mini-holomorphic bundle with Dirac type singularity
$\nbigv$ on $(\nbigm;Z)$ is called stable (resp. semistable)
if
\begin{gather*}
 \deg^{\KS}(\nbigv')/\rank(\nbigv')
<\deg^{\KS}(\nbigv)/\rank(\nbigv)
\\
\bigl(
 \mbox{\rm resp. }
 \deg^{\KS}(\nbigv')/\rank(\nbigv')
\leq\deg^{\KS}(\nbigv)/\rank(\nbigv)
\bigr)
\end{gather*}
for any locally free $\nbigo_{\nbigm\setminus Z}$-submodule
$\nbigv'$ of $\nbigv$ such that
$0<\rank(\nbigv')<\rank(\nbigv)$.
It is called polystable
if it is semistable and a direct sum of stable submodules.
\end{Definition}

\subsubsection{Kobayashi--Hitchin correspondence}

Let $(E,h,\nabla,\phi)$ be a monopole with Dirac type singularity
on $\nbigm\setminus Z$.
We set $\del_{E,\wbar}:=\nabla_{\wbar}$
and $\del_{E,\del_t}:=\nabla_t-\sqrt{-1}\phi$.
Let $\nbigv$ be the sheaf of sections $s$ of $E$
such that $\del_{E,\wbar}s=\del_{E,t}s=0$.
It is a~standard fact that~$\nbigv$ is
a mini-holomorphic bundle with Dirac type singularity
on $(\nbigm;Z)$.
The following theorem was formulated by
Kontsevich and Soibelman~\cite{Kontsevich-Soibelman}.
\begin{Theorem}[the untwisted case in Theorem~\ref{thm;19.2.27.10}, Proposition~\ref{prop;19.2.27.21}]
\label{thm;19.3.2.1}
The procedure induces
an equivalence between
monopoles with Dirac type singularity
on $\nbigm\setminus Z$
and polystable mini-holomorphic bundles with Dirac type singularity
of degree $0$ on $(\nbigm;Z)$.
\end{Theorem}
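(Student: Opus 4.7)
The plan is to establish the equivalence in two directions. For the forward direction, I would verify that the construction $(E,h,\nabla,\phi) \mapsto \nbigv$ described before the theorem yields a mini-holomorphic bundle with Dirac type singularity, then show that it has degree zero and is polystable. For the reverse direction, I would construct, given any polystable mini-holomorphic bundle $\nbigv$ of degree zero with Dirac singularity, a Hermitian metric $h$ on $\nbigv_{|\nbigm \setminus Z}$ whose induced unitary connection $\nabla$ and anti-self-adjoint endomorphism $\phi$, defined by splitting $\del_{E,\del_t} = \nabla_t - \sqrt{-1}\phi$, solve the Bogomolny equation with the required Dirac asymptotics $|\phi|_h = O(d(\cdot,Z)^{-1})$.

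In the forward direction, the integrability of the mini-holomorphic structure falls out by projecting $F(\nabla) = \ast \nabla \phi$ onto its $\del_{\wbar}\del_{\wbar}$, $\del_w\del_{\wbar}$, and $\del_t\del_{\wbar}$ components; the vanishing of $[\del_{E,\wbar}, \del_{E,\del_t}]$ is one of these identities. Near $P = (t_0, w_0) \in Z$, the Dirac asymptotics and Kronheimer's local model \cite{Kronheimer-Master-Thesis}, together with the conversion to Dirac asymptotics in~\cite{Mochizuki-Yoshino}, force the gluing isomorphism between $\nbigv_{|\{t_0-\epsilon_1\}\times B^{\ast}_{w_0}(\delta)}$ and $\nbigv_{|\{t_0+\epsilon_1\}\times B^{\ast}_{w_0}(\delta)}$ to be meromorphic at $w_0$. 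To compute $\deg^{\KS}(\nbigv)$, I represent $c_1(\nbigv)$ by the Chern-Weil form $(2\pi)^{-1}\tr F(\nabla)$ on $\nbigm \setminus Z$ plus residue contributions at $Z$ dictated by the local model; pairing with $\sigma$ via $\Phi_Z$ and applying the Bogomolny equation converts the integrand to an exact form, with boundary contributions near $Z$ that cancel by the anti-self-adjointness of $\phi$ and the Dirac model, yielding $\deg^{\KS}(\nbigv) = 0$.

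For polystability, given a mini-holomorphic sub-bundle $\nbigv' \subset \nbigv$, I would introduce the $h$-orthogonal projection $\pi'$ onto the $C^{\infty}$-subbundle underlying $\nbigv'$ over $\nbigm \setminus Z$ and derive a Simpson-type identity whose schematic form is $\deg^{\KS}(\nbigv') = -\|\del_{E,\wbar}\pi'\|_{L^2}^2 - \|\del_{E,\del_t}\pi'\|_{L^2}^2$, with the singular contributions near $Z$ vanishing because $\nbigv'$ itself has Dirac type singularity. Nonpositivity is then immediate; equality forces $\pi'$ to be mini-holomorphic, splitting off $\nbigv'$ as a direct summand and giving polystability. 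Uniqueness of the decomposition follows by standard endomorphism-algebra arguments.

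The main obstacle is the reverse direction. Starting from a polystable mini-holomorphic bundle of degree zero with Dirac type singularity, I would first build a reference metric $h_0$ on $\nbigv_{|\nbigm \setminus Z}$ by gluing Kronheimer-type model metrics $h_P$ near each $P \in Z$, chosen from the classification of~\cite{Mochizuki-Yoshino} so as to reproduce the prescribed meromorphic local structure, to a smooth background metric on the complement; the defect of $h_0$ from solving Bogomolny is then supported away from $Z$. I would solve the equation by a Donaldson-type heat flow or continuity method in weighted Sobolev spaces adapted to the Dirac asymptotics. Long-time existence and convergence rely on ruling out degenerations: any divergence of the flow extracts, by weak compactness, a mini-holomorphic subsheaf $\nbigv' \subset \nbigv$ whose $\deg^{\KS}(\nbigv')/\rank(\nbigv')$ would violate polystability or the degree hypothesis, paralleling~\cite{Mochizuki-difference-modules, Mochizuki-q-difference-modules}. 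The truly delicate step will be maintaining the Dirac asymptotics $|\phi|_h = O(d(\cdot,Z)^{-1})$ uniformly along the flow so that they pass to the limit; this is handled by the preselected local models at each $P \in Z$ together with a removable-singularity argument, which is significantly cleaner than in the earlier periodic-in-one-or-two-directions cases because $\nbigm$ itself is compact and no asymptotic behavior at spatial infinity needs to be controlled.
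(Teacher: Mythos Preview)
Your forward direction and polystability arguments are essentially those of the paper: the Chern--Weil identity you sketch is formula~(\ref{eq;19.2.27.2}), and the degree-zero computation is Proposition~\ref{prop;19.2.27.21}. The reverse direction, however, takes a genuinely different route. You propose to work directly on the $3$-manifold $\nbigm\setminus Z$: glue local Kronheimer models to a background metric, run a Donaldson-type heat flow in weighted Sobolev spaces, and control the Dirac asymptotics along the flow. This is viable in principle, but it means rebuilding the full analytic machinery (weighted function spaces, a~priori estimates, convergence and destabilising-subsheaf extraction) from scratch in the mini-complex setting.

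The paper instead uses a dimensional lifting trick. One pulls back $\big(E,\delbar_E\big)$ along the projection $p\colon X\to\nbigm$, where $X$ is a complex $2$-torus carrying an $S^1$-action with quotient $\nbigm$; the resulting $S^1$-equivariant holomorphic bundle $\big(\Etilde,\delbar_{\Etilde}\big)$ on the K\"ahler manifold $X\setminus p^{-1}(Z)$ lands directly in the hypotheses of Simpson's theorem \cite[Theorem~1]{Simpson88}. Analytic stability of $\big(E,\delbar_E,h_0\big)$ is equivalent to $S^1$-equivariant analytic stability upstairs (Lemma~\ref{lem;19.2.27.4}), Simpson produces an $S^1$-invariant Hermitian--Einstein metric $\htilde$, and descent gives the monopole metric $h$; the Dirac asymptotics then follow from mutual boundedness of $h$ and $h_0$ via \cite[Theorem~3]{Mochizuki-Yoshino}. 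What this buys is that all the hard analysis is already packaged in Simpson's theorem: your approach would reprove that analysis in a new setting, whereas the paper recognises that the $3$-dimensional problem is literally an $S^1$-reduction of one Simpson already solved.
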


We shall relate the degree of Kontsevich and Soibelman
with the analytic degree defined in terms of Hermitian metrics
(Proposition~\ref{prop;19.2.27.21}).
Then,
Theorem~\ref{thm;19.3.2.1} follows
from the fundamental theorem due to Simpson~\cite{Simpson88}
as we shall explain in
the proof of Theorem~\ref{thm;19.2.27.10},
which is an analogue of a result due to
Charbonneau and Hurtubise~\cite{Charbonneau-Hurtubise}
for singular monopoles on $3$-dimensional manifolds
obtained as the product of~$S^1$ and a compact Riemann surface.
See also the work of Yoshino~\cite{Yoshino-KH} on the Kobayashi--Hitchin correspondence for monopoles with Dirac type singularity on mini-complex 3-dimensional manifolds.

\subsection{Parabolic difference modules on elliptic curves}

Let us give a complement
on correspondences between mini-holomorphic bundles
with Dirac type singularity
on a $3$-dimensional torus
and parabolic difference modules on elliptic curves.

\begin{Remark}
After completing the first version of this paper,
the author was informed that
{\rm\cite{Kontsevich-Soibelman}} also already contains
the correspondence with difference modules
on elliptic curves.
\end{Remark}

\subsubsection{Parabolic difference modules on elliptic curves
and a stability condition}
\label{subsection;19.10.18.20}

Let $\Gamma_0$ be a lattice of $\cnum$.
We set $T:=\cnum/\Gamma_0$.
Let $\gminia\in \cnum$.
Let $\Phi\colon T\lrarr T$ be the morphism
induced by $\Phi(z)=z+\gminia$.
Let $D\subset T$ be a finite subset.
Let $\nbigo_T(\ast D)$ denote the sheaf of
meromorphic functions on $T$ which may have
poles along $D$.
For any $\nbigo_T$-module $\nbigf$,
we set $\nbigf(\ast D):=\nbigf\otimes_{\nbigo_T}\nbigo_T(\ast D)$.
A parabolic $\gminia$-difference module on~$T$
consists of the following data
$V_{\ast}=\bigl( V,(\vectau_P,\vecnbigl_P)_{P\in D}\bigr)$:{\samepage
\begin{itemize}\itemsep=0pt
\item
 A locally free $\nbigo_T$-module $V$.
\item
 An isomorphism of $\nbigo_T(\ast D)$-modules
 $V(\ast D)\simeq (\Phi^{\ast})^{-1}(V)(\ast D)$.
\item
A sequence
$0\leq \tau_{P,1}<\tau_{P,2}<\cdots<\tau_{P,m(P)}<1$
 for each $P\in D$.
\item
Lattices $\nbigl_{P,i}$ $(i=1,\ldots,m(P)-1)$
of the stalk $V(\ast D)_{P}$ at each $P\in D$.
We formally set
$\nbigl_{P,0}:=V_{P}$
and $\nbigl_{P,m(P)}:=(\Phi^{\ast})^{-1}(V)_P$
at each $P\in D$.
\end{itemize}
When we fix $(\vectau_P)_{P\in D}$,
it is called a parabolic $\gminia$-difference module
on $(T,(\vectau_P)_{P\in D})$.}

The degree of a parabolic $\gminia$-difference module
$(V,(\vectau_P,\vecnbigl_P)_{P\in D})$
is defined as follows
\begin{gather}\label{eq;19.8.10.1}
 \deg\bigl(V,(\vectau_{P},\vecnbigl_P)_{P\in D}\bigr):=
 \deg(V)
+\sum_{P\in D}
 \sum_{i=1}^{m(P)}
 (1-\tau_{P,i})\deg(\nbigl_{P,i},\nbigl_{P,i-1}).
\end{gather}
Here, we set
$\deg(\nbigl_{P,i},\nbigl_{P,i-1}):=
 \length\bigl(\nbigl_{P,i}/\nbigl_{P,i-1}\cap\nbigl_{P,i}\bigr)
-\length\bigl(\nbigl_{P,i-1}/\nbigl_{P,i-1}\cap\nbigl_{P,i}\bigr)$.
The degree can be rewritten as
\[
 \deg\bigl(V,(\vectau_{P},\vecnbigl_P)_{P\in D}\bigr):=
 \deg(V)
-\sum_{P\in D}
 \sum_{i=1}^{m(P)}
 \tau_{P,i}\deg(\nbigl_{P,i},\nbigl_{P,i-1}),
\]
because $\sum_{P\in D}
 \sum_{i=1}^{m(P)}\deg(\nbigl_{P,i},\nbigl_{P,i-1})=0$.
The slope is defined in the standard way
\[
 \mu(V,(\vectau_P,\vecnbigl_P)_{P\in D}):=
 \deg(V,(\vectau_P,\vecnbigl_P)_{P\in D})/\rank V.
\]

For any $\nbigo_T(\ast D)$-submodule $0\neq V'\subset V$
such that
$V'(\ast D)\simeq (\Phi^{\ast})^{-1}(V')(\ast D)$,
we obtain lattices
$\nbigl'_{P,i}$ of $V'(\ast D)_P$
by setting
$\nbigl'_{P,i}:=\nbigl_{P,i}\cap V'(\ast D)_P$
in $V(\ast D)_P$,
and we obtain a parabolic
$\gminia$-difference module
$(V',(\vectau_P,\vecnbigl'_P)_{P\in D})$.
Such $(V',(\vectau_P,\vecnbigl'_P)_{P\in D})$
is called a parabolic $\gminia$-difference submodule
of $(V,(\vectau_P,\vecnbigl_P)_{P\in D})$.

\begin{Definition}
\label{df;19.8.10.2}
$(V,(\vectau_P,\vecnbigl_P)_{P\in D})$
is called stable (resp. semistable)
if
\begin{gather*}
\mu(V',(\vectau_P,\vecnbigl'_P)_{P\in D})
<\mu(V,(\vectau_P,\vecnbigl_P)_{P\in D})\\
 \bigl(
\mbox{\rm resp. }
\mu(V',(\vectau_P,\vecnbigl'_P)_{P\in D})
\leq\mu(V,(\vectau_P,\vecnbigl_P)_{P\in D})
\bigr)
\end{gather*}
for any parabolic $\gminia$-difference submodules
such that $0<\rank V'<\rank V$.
It is called polystable if it is semistable
and a direct sum of stable objects.
\end{Definition}

\subsubsection{Equivalence}

We return to the situation in Section~\ref{subsection;19.3.2.20}.
We take a generator
$e_i=(a_i,\alpha_i)$ $(i=1,2,3)$
of $\Gamma\subset\real_t\times\cnum_w=Y$,
which is compatible with the orientation of $Y$.
We also assume that~$\alpha_1$ and~$\alpha_2$ generate
a lattice in~$\cnum$
and compatible with the orientation of~$\cnum$.
Let $\Gamma_0$ denote the lattice,
and we set $T:=\cnum/\Gamma_0$.
We set
\[
\gamma:=-\frac{a_1\alphabar_2-a_2\alphabar_1}
 {\alpha_1\alphabar_2-\alpha_2\alphabar_1},
\qquad
 \gminit:=a_3+2\Re(\gamma\alpha_3),
\qquad
 \gminia:=\alpha_3.
\]
It is easy to see that $\gminit>0$.
We define the isomorphism
$F\colon \real_t\times\cnum_w\simeq\real_s\times\cnum_u$
by
\[
 s=t+2\Re(\gamma w), \qquad
 u=w.
\]
Note that the induced action of $\Gamma$
on $\real_s\times\cnum_u$ is expressed as follows:
\[
 e_i(s,u)=(s,u+\alpha_i)\quad (i=1,2),
\qquad
 e_3(s,u)=(s+\gminit,u+\gminia).
\]

We set $\closedopen{0}{\gminit}{}:=\{0\leq s<\gminit\}$.
Let $Z_Y$ be the pull back of $Z$
by $Y\lrarr \nbigm$.
Let $D$ denote the image of the composite
of the following maps:
\[
F(Z_Y)\cap
 \bigl(
 \closedopen{0}{\gminit}\times\cnum_u
 \bigr)
\subset
 \real_s\times\cnum_u
\lrarr \cnum_u\lrarr T.
\]

For any $P\in D$,
we take $u_0\in\cnum$ which is mapped to $P$.
We obtain a sequence
$0\leq s_{P,1}<s_{P,2}<\cdots <s_{P,m(P)}<\gminit$
by the condition:
\[
 \{(s_{P,i},u_0)\,|\,i=1,\ldots,m(P)\}
=F(Z_Y)\cap
 \bigl(
 \closedopen{0}{\gminit}\times\{u_0\}
 \bigr).
\]
It is independent of the choice of $u_0$.
We set
$\tau_{P,i}:=s_{P,i}/\gminit$.

\begin{Proposition}[the untwisted case in Propositions~\ref{prop;19.3.2.10} and~\ref{prop;19.2.27.50}]\label{prop;19.3.3.20}
There exists an equivalence between
parabolic difference modules on
$(T,(\vectau_P)_{P\in D})$
and mini-holomorphic bundles
with Dirac type singularity on $(\nbigm;Z)$.
The equivalence preserves the degree
up to the multiplication of a~positive constant.
As a result, the equivalence preserves
the $($poly$)$stability condition.
\end{Proposition}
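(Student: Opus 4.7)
The plan is to exploit the coordinate change $F\colon (t,w)\mapsto (s,u)$, which makes both the $\Gamma$-action and the mini-complex structure transparent. Under $F$, the subgroup $\langle e_1,e_2\rangle$ acts by $u$-translations generating $\Gamma_0$ while $e_3$ acts by $(s,u)\mapsto(s+\gminit,u+\gminia)$, realising $\nbigm$ as the mapping torus of $\Phi\colon T\to T$ with period $\gminit$. Moreover $F$ preserves the mini-complex structure because $\del_s=\del_t$ and $\del_{\bar u}=\del_{\bar w}-\bar\gamma\,\del_t$, so the defining conditions become $\del_s f=\del_{\bar u}f=0$. In the fundamental domain $\closedopen{0}{\gminit}\times T$, the lift of $Z$ is exactly $\{(s_{P,i},P)\}_{P,i}$, which will supply the parabolic weights and lattices.

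For the forward functor, pull back a mini-holomorphic bundle $\nbigv$ on $(\nbigm;Z)$ to $\real_s\times\cnum_u$ minus the lift of $Z$. Because mini-holomorphic functions are constant in $s$, the restrictions $\nbigv_{|\{s\}\times T}$ are canonically isomorphic as $s$ varies within any open interval avoiding all $s_{P,i}+k\gminit$; let $V$ be this holomorphic bundle for small $s>0$. Crossing $s_{P,i}$ keeps the bundle unchanged outside $P$ and changes its stalk at $P$ from a lattice $\nbigl_{P,i-1}$ to a lattice $\nbigl_{P,i}$ of $V(\ast D)_P$. The $e_3$-identification yields $\nbigv_{|\{\gminit-\epsilon\}\times T}\simeq (\Phi^\ast)^{-1}V$ which, composed with the transitions, produces the required isomorphism $V(\ast D)\simeq (\Phi^\ast)^{-1}(V)(\ast D)$. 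The inverse functor reconstructs $\nbigv$ slab by slab: on each $(s_j,s_{j+1})\times T$ take the pullback of the $j$-th lattice; glue across $s_j$ by the identity outside the relevant point of $D$; glue at $s=\gminit$ by the prescribed $\Phi$-structure. Each reconstructed singular point is a Dirac type singularity because the local transition is meromorphic in $u$ and trivial elsewhere. Verifying that the two functors are mutually inverse is direct bookkeeping from the construction.

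For the degree comparison, order the singular values $s_0:=0<s_1<\cdots<s_M<s_{M+1}:=\gminit$ and let $V^{(j)}$ be the holomorphic bundle on $T$ just after $s_j$, so that $\deg V^{(j)}-\deg V^{(j-1)}=\deg(\nbigl_{P_j,i_j},\nbigl_{P_j,i_j-1})$ for the unique $(P_j,i_j)$ with $s_{P_j,i_j}=s_j$. A \v{C}ech--de~Rham representation of $c_1(\nbigv)\in H^2(\nbigm\setminus Z)$, localised transversely to the $s$-direction, exhibits the Poincar\'e dual in $H_1(\nbigm,Z)$ as a union of segments in the $s$-direction with multiplicity $\deg V^{(j)}$ on the $j$-th segment, giving
\[
 \int_{\Phi_Z(c_1(\nbigv))}ds=\sum_{j=0}^{M}\deg V^{(j)}\bigl(s_{j+1}-s_j\bigr).
\]
Since $dt=ds$ at fixed $u$ and the transverse components along $dx_2$ and $dx_3$ vanish on this cycle, an Abel summation gives
\[
 \deg^{\KS}(\nbigv)=\gminit\deg V+\sum_{P,i}(\gminit-s_{P,i})\deg(\nbigl_{P,i},\nbigl_{P,i-1})=\gminit\cdot\deg\bigl(V,(\vectau_P,\vecnbigl_P)_{P\in D}\bigr),
\]
whence $\gminit>0$ forces the preservation of (poly)stability.

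The main obstacle lies in the last step: producing a Poincar\'e dual 1-cycle for $c_1(\nbigv)$ that splits cleanly into a ``bulk'' contribution from the varying first Chern class $\deg V^{(j)}$ along the $s$-direction and a ``defect'' contribution at each Dirac singularity, and verifying that the integrals of $dx_2$ and $dx_3$ along it vanish. Once each local Dirac charge near $(s_{P,i},P)$ is identified with the lattice jump $\deg(\nbigl_{P,i},\nbigl_{P,i-1})$ through a small linking 2-sphere, the Abel summation is mechanical; the delicate point is to obtain the term-by-term identification with the parabolic formula~\eqref{eq;19.8.10.1} rather than a mere agreement up to an undetermined constant.
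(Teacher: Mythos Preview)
Your construction of the equivalence functor is essentially the same as the paper's functor $\Upsilon$ (Proposition~\ref{prop;19.3.2.10}): both pull back to $\nbigm^{\cov}\simeq\real_s\times T$, read off $V$ as a slice at small~$s$, and record the lattices $\nbigl_{P,i}$ from slices between consecutive singular levels, with the $\Phi$-structure coming from the $\tte_3$-action.

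For the degree comparison, however, you take a genuinely different route from the paper. The paper does \emph{not} compute the Kontsevich--Soibelman degree $\deg^{\KS}$ by exhibiting a Poincar\'e dual $1$-cycle. Instead it works with the \emph{analytic} degree $\deg^{\an}(E)=\int_{\nbigm}\Tr G(h)$ for an adapted metric~$h$, and proves the algebraic identity
\[
 G(h)=\bigl[\del_{E,h,u},\del_{E,\ubar}\bigr]-\sqrt{-1}\nabla_{h,\gminiv}\phi_h
\]
for a specific invariant vector field $\gminiv$ (Lemma~\ref{lem;19.2.25.1}). The $\nabla_{h,\gminiv}\phi_h$ term integrates to zero by Stokes, using the $L^1$ estimates from Lemma~\ref{lem;19.10.18.30}; the commutator term is handled by Fubini, since on each slice $\{s\}\times T$ it computes $\pi\deg\bigl(E^{\cov}_{|\{s\}\times T}\bigr)$. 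The resulting integral $\int_0^{\gminit}\deg\bigl(E^{\cov}_{|\{s\}\times T}\bigr)\,ds$ is then exactly your piecewise sum $\sum_j\deg V^{(j)}(s_{j+1}-s_j)$, and the Abel summation matching it with the parabolic formula is the same. The identification $\deg^{\KS}=\pi^{-1}\deg^{\an}$ is established separately (Proposition~\ref{prop;19.2.27.21}), again analytically via Lemma~\ref{lem;19.2.27.12} applied to the $L^1$ curvature form.

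Your topological route is appealing because it bypasses Hermitian metrics entirely, but as you yourself flag, it has a real gap. You have not actually produced a $1$-cycle in $H_1(\nbigm,Z)$ Poincar\'e dual to $c_1(\nbigv)$ that is supported on $s$-segments at fixed~$u$: the slabwise Chern classes $c_1(V^{(j)})\in H^2(T)$ are numbers, not canonical $0$-cycles, and to glue slab representatives into a relative $1$-cycle with boundary in $Z$ you must arrange that the $0$-cycles differ only at the singular points, which requires choices you have not made. Without such a cycle you cannot conclude that the $dx_2$ and $dx_3$ integrals vanish; in the paper this vanishing is a nontrivial consequence of the Bogomolny-type identities in Lemma~\ref{lem;19.10.20.2} together with the Stokes argument~(\ref{eq;19.3.3.10}). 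The paper's analytic approach trades this geometric construction for differential identities and $L^1$ control, which makes the argument go through cleanly and, as a bonus, extends uniformly to the $\varrho$-twisted case where the transverse components no longer vanish but are computed by $\int_{\nbigm}\varrho_0$.
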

See Section~\ref{subsection;19.2.25.12}
for the explicit correspondence.
As a consequence
of Theorem~\ref{thm;19.3.2.1}
and Proposition~\ref{prop;19.3.3.20},
we obtain the following theorem.

\begin{Theorem}\label{thm;19.8.10.110}
We have the equivalence of the following objects:
\begin{itemize}\itemsep=0pt
\item
 Monopoles with Dirac type singularity
 on $\nbigm\setminus Z$.
\item
 Polystable mini-holomorphic bundles with Dirac type singularity
 of degree $0$ on $(\nbigm;Z)$.
\item
 Polystable parabolic difference modules
 of degree $0$ on $(T,(\vectau_{P})_{P\in D})$.
\end{itemize}
Here, $Z$ and $(\vectau_P)_{P\in D}$
are related as above.
\end{Theorem}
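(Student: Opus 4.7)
The plan is to obtain Theorem~\ref{thm;19.8.10.110} by composing the two equivalences already established in Theorem~\ref{thm;19.3.2.1} and Proposition~\ref{prop;19.3.3.20}. Concretely, Theorem~\ref{thm;19.3.2.1} identifies monopoles with Dirac type singularity on $\nbigm\setminus Z$ with polystable mini-holomorphic bundles with Dirac type singularity of degree $0$ on $(\nbigm;Z)$, via the construction $\nbigv:=\ker(\del_{E,\wbar})\cap\ker(\del_{E,\del_t})$; Proposition~\ref{prop;19.3.3.20} identifies mini-holomorphic bundles with Dirac type singularity on $(\nbigm;Z)$ with parabolic $\gminia$-difference modules on $(T,(\vectau_P)_{P\in D})$ through the combinatorial correspondence between $Z$ and $(D,(\vectau_P)_{P\in D})$ specified just before the statement.

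First I would invoke Theorem~\ref{thm;19.3.2.1} (in its untwisted form) to pass from a monopole with Dirac type singularity on $\nbigm\setminus Z$ to the associated polystable mini-holomorphic bundle $\nbigv$ of degree $0$ on $(\nbigm;Z)$, and conversely to reconstruct the monopole from such a $\nbigv$. No new work is required at this step, since the equivalence is a direct output of the Kobayashi--Hitchin theory, resting on Simpson's fundamental theorem together with the comparison between $\deg^{\KS}$ and the analytic degree provided by Proposition~\ref{prop;19.2.27.21}.

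Next I would apply Proposition~\ref{prop;19.3.3.20} to identify mini-holomorphic bundles with Dirac type singularity on $(\nbigm;Z)$ with parabolic $\gminia$-difference modules on $(T,(\vectau_P)_{P\in D})$. The proposition asserts that the equivalence preserves the degree up to multiplication by a positive constant, so it sends objects of $\deg^{\KS}$ zero to objects of degree zero in the sense of~\eqref{eq;19.8.10.1}, and it matches the (poly)stability conditions on the two sides. Composing this with the first equivalence yields the claimed threefold equivalence in the theorem.

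The only point left to verify is the compatibility of the degree-zero conditions under the composition, and this is immediate from the positive scaling of degrees built into Proposition~\ref{prop;19.3.3.20}. I do not anticipate any substantive obstacle in this deduction; the argument is a purely formal concatenation of two results already proved, and whatever genuine difficulty there is has been absorbed into Theorem~\ref{thm;19.3.2.1} and Proposition~\ref{prop;19.3.3.20} themselves.
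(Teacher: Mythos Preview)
Your proposal is correct and matches the paper's own argument exactly: the paper states the theorem as an immediate consequence of Theorem~\ref{thm;19.3.2.1} and Proposition~\ref{prop;19.3.3.20}, with no further proof given. Your observation that the degree-zero condition is preserved because Proposition~\ref{prop;19.3.3.20} scales degrees by a positive constant is precisely the compatibility check needed, and there is nothing more to add.
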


This study is partially motivated by
the holomorphic Floer theory~\cite{Kontsevich-Soibelman}
of Kontsevich and Soibelman.
Among other things,
they revisit the Riemann--Hilbert correspondence for $D$-modules
from the viewpoint of symplectic topology,
and they extend it to the context of difference modules
of various types.
Moreover, they propose an analogue of the non-abelian Hodge theory
in the context of difference modules,
where the role of harmonic bundles should be played by monopoles
as in Theorem~\ref{thm;19.8.10.110}.

Though the untwisted case is explained
in this introduction,
we shall study the twisted case,
i.e.,
equivalences of twisted mini-holomorphic bundles,
twisted difference modules,
and twisted monopoles.
We should note that
Kontsevich and Soibelman suggested
that there should exist a~twisted version of
of Theorem~\ref{thm;19.8.10.110}.

\section{Preliminary}\label{section;19.10.17.10}

We introduce the notions of
twisted mini-holomorphic bundles
and twisted monopoles
as ge\-neralizations of
the notions of mini-holomorphic bundles~\cite{Mochizuki-difference-modules}
and monopoles.
We are interested only in the case
where the base manifolds are $3$-dimensional torus.
We also introduce twisted difference modules
on elliptic curves.

\subsection{Mini-complex structure on 3-dimensional manifolds}

Let $(t,w)$ denote the standard coordinate system
on $\real\times\cnum$.
Let $M$ be an oriented $3$-dimensional $C^{\infty}$-manifold.
A mini-complex coordinate system on $M$
is a family of open subsets
$U_{\lambda}$ $(\lambda\in\Lambda)$
equipped with an oriented embedding
$\varphi_{\lambda}\colon U_{\lambda}\lrarr
 \real\times\cnum$
satisfying the following conditions.
\begin{itemize}\itemsep=0pt
\item
 $M=\bigcup_{\lambda\in\Lambda} U_{\lambda}$.
\item
 Let $F_{\lambda,\mu}\colon
 \varphi_{\mu}(U_{\lambda}\cap U_{\mu})
\lrarr
 \varphi_{\lambda}(U_{\lambda}\cap U_{\mu})$
denote the induced diffeomorphism
of open subsets in $\real\times\cnum$.
Note that
$F_{\lambda,\mu}$
is expressed as
$((F_{\lambda,\mu})_t(t,w),(F_{\lambda,\mu})_w(t,w))$
in terms of the coordinate systems.
Then,
it holds that
$\del_t(F_{\lambda,\mu})_w=0$
and $\del_{\wbar}(F_{\lambda,\mu})_w=0$.
\end{itemize}
Two mini-complex coordinate systems
$\{(U_{\lambda},\varphi_{\lambda})\}_{\lambda\in\Lambda}$
and
 $\{(V_{\mu},\psi_{\mu})\}_{\mu\in\Gamma}$
are called equivalent
if their union is also a mini-complex coordinate system.
A mini-complex structure on~$M$
is an equivalence class of mini-complex coordinate systems.
We shall not distinguish a mini-complex structure
and a mini-complex coordinate system
contained in the mini-complex structure.

Suppose that $M$ is equipped with a mini-complex structure.
On a mini-complex coordinate neighbourhood
$(U;t,w)$,
let $T_SU$ denote the subbundle of
the tangent bundle $TU$
generated by~$\del_t$.
By patching $T_SU$ for any mini-complex
coordinate neighbourhoods $(U;t,w)$
we obtain the subbundle $T_SM\subset TM$.

Let $T^{\ast}_SM$ denote the dual bundle of $T_SM$.
Let $T_Q^{\ast}M$
denote the kernel of the natural surjection
$T^{\ast}M\lrarr T^{\ast}_SM$.
It is naturally equipped with a complex structure $J$.
Let $\Omega_Q^{1,0}M\subset T_Q^{\ast}M\otimes\cnum$
(resp.~$\Omega_Q^{0,1}M$)
denote the eigen subbundle
with respect to $J$ corresponding to $\sqrt{-1}$
(resp.~$-\sqrt{-1}$).
We set $\Omega^{0,1}M:=(T^{\ast}M\otimes\cnum)/\Omega_Q^{1,0}M$
and $\Omega^{0,i}M:=\bigwedge^i\Omega^{0,1}M$
for $i=0,1,2$.
Similarly,
we set $\Omega^{1,0}M:=(T^{\ast}M\otimes\cnum)/\Omega_Q^{0,1}M$
and $\Omega^{i,0}M:=\bigwedge^i\Omega^{1,0}M$
for $i=0,1,2$.

Let $\delbar_M$ denote the differential operator
$C^{\infty}(M,\cnum)\lrarr
 C^{\infty}\big(M,\Omega^{0,1}M\big)$
induced by the exterior derivative
and the projection
$T^{\ast}M\otimes\cnum
\lrarr
 \Omega^{0,1}M$.
 The induced operator
 \[
C^{\infty}\big(M,\Omega^{0,1}M\big)\lrarr
 C^{\infty}\big(M,\Omega^{0,2}M\big)
 \]
is also denoted by $\delbar_M$.
Similarly, we obtain the operator
\[
 \del_M\colon \ C^{\infty}\big(M,\Omega^{i,0}M\big)
\lrarr
 C^{\infty}\big(M,\Omega^{i+1,0}M\big).
\]

\subsubsection{Riemannian case}\label{subsection;19.10.19.2}

Suppose that $M$ is also equipped with a Riemannian metric~$g_M$.
Let $T_{S,g_M}^{\ast}M$ denote the orthogonal complement of~$T_Q^{\ast}M$.
We shall naturally identify~$T_{S,g_M}^{\ast}M$
and~$T^{\ast}_SM$.

Because $T^{\ast}M$ and $T_Q^{\ast}M$
are oriented,
$T_{S,g_M}^{\ast}M$ is also oriented.
Let $\eta$ be the unique section of~$T_{S,g_M}^{\ast}M$
in the positive direction
such that the norm of $\eta$ is $1$.
By $\eta$,
$T_{S,g_M}^{\ast}M$ is identified with~$\real\times M$.
If there exists a mini-complex coordinate system
$(U;t,w)$ such that $g_{M|U}=dt\,dt+dw\,d\wbar$,
then $\eta_{|M}=dt$.

We obtain a decomposition
\begin{gather}
\label{eq;19.10.18.1}
 T^{\ast}M\otimes\cnum
=\Omega_Q^{1,0}M\oplus \Omega_Q^{0,1}M
\oplus
 T_{S,g_M}^{\ast}M\otimes\cnum.
\end{gather}
We also obtain the isomorphisms
\[
 \Omega_Q^{1,0}M\oplus
 T_{S,g_M}^{\ast}M\otimes\cnum
\simeq
 \Omega^{1,0}M,
\qquad
 \Omega_Q^{0,1}M\oplus
 T_{S,g_M}^{\ast}M\otimes\cnum
\simeq\Omega^{0,1}M.
\]
If the complex structure $J$ on $T^{\ast}_QM$
is an isometry with respect to $g_M$,
the decomposition (\ref{eq;19.10.18.1})
is orthogonal.

\subsection{Twisted mini-holomorphic bundles}

Let $M$ be a mini-complex $3$-dimensional manifold.
Let $E$ be a $C^{\infty}$-vector bundle on $M$.
We shall always assume that the rank of $E$ is finite.
Let $\varrho\in C^{\infty}\big(M,\Omega^{0,2}M\big)$.
\begin{Definition}
A $\varrho$-twisted mini-holomorphic structure of $E$
is a differential operator
$\delbar_E\colon \allowbreak C^{\infty}(M,E)
\lrarr
 C^{\infty}\big(M,\Omega^{0,1}M\otimes E\big)$
such that the following conditions are satisfied.
\begin{itemize}\itemsep=0pt
\item
 $\delbar_E(fs)=f\delbar_E(s)+(\delbar_Mf)\otimes s$
holds for any $f\in C^{\infty}(M,\cnum)$
and $s\in C^{\infty}(M,E)$.
\item
 The induced operator
 $C^{\infty}\big(M,
 \Omega^{0,1}M\otimes
 E\big)\lrarr C^{\infty}\big(M,\Omega^{0,2}M\otimes E\big)$
is also denoted by $\delbar_E$.
Then, $\delbar_E\circ\delbar_E=\varrho\id_E$ holds.
\end{itemize}
Such $(E,\delbar_E)$ is called
a $\varrho$-twisted mini-holomorphic vector bundle.
If $\varrho=0$, we shall omit the adjective ``$0$-twisted''.
\end{Definition}

\begin{Remark}
A $C^{\infty}$-function $f$ on
an open subset $\nbigu\subset M$
is called mini-holomorphic
if \mbox{$\delbar_Mf=0$}.
Let $\nbigo_M$ denote the sheaf of mini-holomorphic functions.
In the case $\varrho=0$,
mini-holomorphic bundles
are naturally identified with
locally free $\nbigo_M$-modules of finite rank.
Let $(E,\delbar_E)$ be a mini-holomorphic bundle on~$M$.
A local section~$s$ of~$E$ is called mini-holomorphic
if $\delbar_E(s)=0$.
Let $\Etilde$ denote the sheaf of mini-holomorphic sections
of $E$.
Then, it is easy to observe that
$\Etilde$ is a~locally free $\nbigo_M$-module
of finite rank.
This correspondence induces an equivalence
between mini-holomorphic bundles
and locally free $\nbigo_M$-modules of finite rank.
\end{Remark}

\subsubsection{Scattering map}
Let $(E,\delbar_E)$ be a $\varrho$-twisted
mini-holomorphic vector bundle on $M$.
Let $\gamma\colon [0,1]\lrarr M$ be a~$C^{\infty}$-path
such that
$T\gamma(T[0,1])\subset T_SM$.
Then,
$\gamma^{-1}(E)$ is equipped with a connection
induced by the $\varrho$-twisted mini-holomorphic structure $\delbar_E$,
and hence we obtain the induced isomorphism
$E_{\gamma(0)}\simeq E_{\gamma(1)}$.
It is called the scattering map in \cite{Charbonneau-Hurtubise}.

Let $(U;t,w)$ be a mini-complex coordinate neighbourhood of $M$.
Let $\del_{E,t}$ (resp. $\del_{E,\wbar}$)
denote the differential operators of $E_{|U}$
induced by $\delbar_E$ and $\del_t$
(resp. $\del_{\wbar}$).
We have the expression
$\varrho=\varrho_0\,dt\,d\wbar$.
Then, the condition $\delbar_E\circ\delbar_E=\varrho\id_E$ on $U$
is equivalent to
$\bigl[
 \del_{E,t},\del_{E,\wbar}
 \bigr]=\varrho_0\id_E$.
Assume that there exists
$\nu=\nu_t\,dt+\nu_{\wbar}d\wbar
 \in C^{\infty}(U,\Omega^{0,1})$
such that $\delbar\nu=\varrho$ on $U$.
Note that such $\nu$ always exists locally.
On $U$,
we set $\delbar_E^{\nu}=\delbar_E-\nu\,\id_E$.
Then, $(E_{|U},\delbar_E^{\nu})$ is clearly a~mini-holomorphic bundle.

Suppose that $U$ is isomorphic to
$\{t_0<t<t_1\}\times B_{\delta}$,
where $B_{\delta}=\{w\in\cnum\,|\,|w|<\delta\}$.
Take $t_0<b_1<b_2<t_1$.
We obtain the scattering map
$F\colon E_{|\{t=b_1\}\times B_{\delta}}
\simeq
 E_{|\{t=b_2\}\times B_{\delta}}$.
Let $\del_{E,\wbar,b_i}$ denote the operators
on $E_{|\{t=b_i\}\times B_{\delta}}$
by $\del_{E,\wbar}$.

\begin{Lemma}
\label{lem;19.10.19.110}
$F^{\ast}(\del_{E,\wbar,b_2})
=\del_{E,\wbar,b_1}+
 \bigl(
 \int_{b_1}^{b_2}\varrho_0\,dt
\bigr)\id$.
\end{Lemma}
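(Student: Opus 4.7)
The plan is to trivialize $E$ on $U_0:=\{b_1\leq t\leq b_2\}\times B_{\delta}$ by a frame of scattering-parallel sections and then read off the claim from the curvature (twist) equation. Concretely, I would fix any $C^{\infty}$-frame $e_1^{(0)},\ldots,e_r^{(0)}$ of $E_{|\{t=b_1\}\times B_{\delta}}$ and extend each $e_{\alpha}^{(0)}$ to a smooth section $e_{\alpha}$ on $U_0$ by solving the first-order linear ODE (with $w$ as parameter)
\[
\del_{E,t}e_{\alpha}=0,\qquad e_{\alpha}(b_1,\cdot)=e_{\alpha}^{(0)}.
\]
Standard ODE theory with smooth dependence on parameters provides a unique smooth solution; moreover, the restrictions $(e_{\alpha}(b_2,\cdot))$ again form a frame. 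By the very definition of the scattering map, $F$ carries $e_{\alpha}(b_1,\cdot)$ to $e_{\alpha}(b_2,\cdot)$, so in these two frames $F$ is represented by the identity matrix.

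In the frame $(e_{\alpha})$ on $U_0$, the operator $\del_{E,t}$ becomes $\del_t$, while $\del_{E,\wbar}$ takes the form $\del_{\wbar}+A(t,w,\wbar)$ for a smooth $\End(\cnum^r)$-valued function $A$. The twist condition $[\del_{E,t},\del_{E,\wbar}]=\varrho_0\,\id_E$ then translates into the matrix ODE $\del_tA=\varrho_0\,\id$. Integrating in $t$ from $b_1$ to $b_2$ gives
\[
A(b_2,w,\wbar)-A(b_1,w,\wbar)=\Bigl(\int_{b_1}^{b_2}\varrho_0(t,w,\wbar)\,dt\Bigr)\id.
\]

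Since $F$ is the identity in the chosen frames, $F^{\ast}(\del_{E,\wbar,b_2})$ is represented on $\{t=b_1\}\times B_{\delta}$ by $\del_{\wbar}+A(b_2,w,\wbar)$, whereas $\del_{E,\wbar,b_1}$ is represented by $\del_{\wbar}+A(b_1,w,\wbar)$. Subtracting and invoking the displayed equation yields the required identity. There is no real obstacle here: once the scattering-parallel trivialization is set up, the whole proof collapses to the elementary observation that the $t$-derivative of the $\wbar$-component of the connection is $\varrho_0\,\id$, which is exactly the content of $\delbar_E\circ\delbar_E=\varrho\,\id_E$.
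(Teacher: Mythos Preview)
Your proof is correct. The paper takes a slightly different, though closely related, route: instead of trivializing $E$ by a scattering-parallel frame, it chooses $\nu=\nu_{\wbar}\,d\wbar$ with $\del_t\nu_{\wbar}=\varrho_0$, so that $\delbar_E^{\nu}:=\delbar_E-\nu\,\id$ is an honest (untwisted) mini-holomorphic structure. Since $\nu$ has no $dt$-component, the scattering map $F$ for $\delbar_E^{\nu}$ coincides with that for $\delbar_E$, and for the untwisted structure one has $F^{\ast}(\del^{\nu}_{E,\wbar,b_2})=\del^{\nu}_{E,\wbar,b_1}$; adding back $\nu_{\wbar}(b_i,\cdot)\,\id$ and noting $\nu_{\wbar}(b_2,\cdot)-\nu_{\wbar}(b_1,\cdot)=\int_{b_1}^{b_2}\varrho_0\,dt$ gives the claim. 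Your argument is more self-contained (it does not appeal to the untwisted case as a known fact), while the paper's argument fits its recurring theme of reducing twisted statements to untwisted ones by subtracting a scalar potential~$\nu$, a device used repeatedly later (e.g., in Lemmas~\ref{lem;19.10.19.101} and~\ref{lem;19.10.19.200}). The two computations are of course equivalent: your connection matrix $A(t,\cdot)$ differs from a fixed matrix by a scalar $\nu_{\wbar}(t,\cdot)\,\id$, which is exactly the paper's~$\nu$.
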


\begin{proof} Take $\nu=\nu_{\wbar}\,d\wbar$ such that $\delbar\nu=\varrho$,
i.e.,
$\del_t\nu_{\wbar}=\varrho_0$.
Then,
$F^{\ast}(\del^{\nu}_{E,\wbar,b_2})
=\del^{\nu}_{E,\wbar,b_1}$
because of $[\del^{\nu}_{E,t},\del^{\nu}_{E,\wbar}]=0$.
Then, the claim of the lemma follows.
\end{proof}

\subsubsection{Twisted mini-holomorphic bundles with Dirac type singularity}

Let $Z\subset M$ be a discrete subset.
Let $(E,\delbar_E)$ be a $\varrho$-twisted
mini-holomorphic bundle on $M\setminus Z$.
Let $P$ be a point of $Z$.
Let $(U;t,w)$ be a mini-complex coordinate
neighbourhood around $P\in Z$.
We may assume $(t(P),w(P))=(0,0)$.
By shrinking $U$,
we assume that
$U\simeq
 \{-2\epsilon<t<2\epsilon\}
\times
 B_{\delta}$
by the mini-complex coordinate system
for some $\epsilon>0$ and $\delta>0$.
Set $B^{\ast}_{\delta}:=B_{\delta}\setminus\{0\}$.
We obtain the scattering map
$F\colon E_{|\{-\epsilon\}\times B^{\ast}_{\delta}}
\simeq
E_{|\{\epsilon\}\times B^{\ast}_{\delta}}$.
\begin{Definition}\label{df;19.10.19.1}
$P$ is a Dirac type singularity of $(E,\delbar_E)$
if $F$ and $F^{-1}$ are $O(|w|^{-N})$ for some $N>0$
with respect to $C^{\infty}$-frames of
$E_{|\{\pm\epsilon\}\times B^{\ast}_{\delta}}$.
If each point of $Z$ is Dirac type singularity of $(E,\delbar_E)$,
we say that $(E,\delbar_E)$ is
a $\varrho$-twisted mini-holomorphic bundle
with Dirac type singularity on $(M;Z)$.
\end{Definition}

Take $\nu=\nu_t\,dt+\nu_{\wbar}d\wbar
\in C^{\infty}\big(U,\Omega^{0,1}\big)$
such that
$\delbar\nu=\varrho$.
We set
$\delbar^{\nu}_{E}:=
 \delbar_{E_{|U}}-\nu\id$
so that
$\big(E_{|U},\delbar^{\nu}_{E}\big)$
is mini-holomorphic.
The scattering map
$F^{\nu}\colon
 E_{|\{-\epsilon\}\times B_{\delta}^{\ast}}
 \simeq
 E_{|\{\epsilon\}\times B_{\delta}^{\ast}}$
for $\delbar^{\nu}_E$
is holomorphic with respect to
$\del^{\nu}_{E,\wbar}$.
Note that
$F^{\nu}=\exp\big(\int_{-\epsilon}^{\epsilon}\nu_t\big)F$.
The condition in Definition~\ref{df;19.10.19.1}
is satisfied if and only if
 $F^{\nu}$ extends to a meromorphic isomorphism
$\big(E_{|\{-\epsilon\}\times B_{\delta}},\del^{\nu}_{E,\wbar,-\epsilon}\big)
 (\ast 0)
\simeq
 (E_{|\{\epsilon\}\times B_{\delta}},\del^{\nu}_{E,\wbar,\epsilon})
 (\ast 0)$,
i.e.,
$P$ is Dirac type singularity of
$\big(E_{|U},\delbar^{\nu}_E\big)$
in the sense of \cite[Section~2.2]{Mochizuki-difference-modules}.

We regard $U$ as an open subset of $\real\times\cnum$
by the coordinate system $(t,w)$.
Let $\varphi\colon \cnum^2\lrarr\real\times\cnum$
be given by
$\varphi(z_1,z_2)=\big(|z_1|^2-|z_2|^2,2z_1z_2\big)$.
Let $\Utilde$ be the pull back of $U$
by $\varphi$.
The mini-holomorphic bundle
$\big(E,\delbar^{\nu}_E\big)_{|U\setminus\{P\}}$
induces an $S^1$-equivariant holomorphic vector bundle
$\big(\Etilde'_P,\delbar^{\nu}_{\Etilde'_P}\big)$ on
$\Utilde\setminus\{(0,0)\}$,
which uniquely extends to an $S^1$-equivariant
holomorphic vector bundle
$\big(\Etilde^{\nu}_P,\delbar^{\nu}_{\Etilde_P}\big)$
on $\Utilde$.
(See \cite[Section~2.2]{Mochizuki-Yoshino} for
a more detailed explanation.)

\begin{Lemma}
Suppose that $\nu_i=\nu_{i,t}\,dt+\nu_{i,\wbar}d\wbar
 \in C^{\infty}\big(U,\Omega^{0,1}\big)$ $(i=1,2)$
satisfy
$\delbar\nu_i=\varrho$.
Then,
the natural identification
$\Etilde^{\nu_1}_{P|\Utilde\setminus\{(0,0)\}}
=\Etilde'_P
=\Etilde^{\nu_2}_{P|\Utilde\setminus\{(0,0)\}}$
uniquely extends to
a $C^{\infty}$-isomorphism
$\Etilde^{\nu_1}_P\simeq
 \Etilde^{\nu_2}_P$.
\end{Lemma}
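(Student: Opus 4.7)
The plan is to exhibit the extension as multiplication by $e^g$ for a smooth primitive $g$ of $\eta:=\nu_2-\nu_1$ with respect to the mini-complex $\delbar$-operator. The equality $\delbar\nu_i=\varrho$ gives $\delbar\eta=0$; in local coordinates, for $\eta=\eta_t\,dt+\eta_{\wbar}\,d\wbar$, this reads $\del_{\wbar}\eta_t=\del_t\eta_{\wbar}$. After shrinking $U$ around $P$ to a product of an interval and a disk, I would invoke a Poincar\'e-type lemma for the mini-complex $\delbar$-complex to produce $g\in C^{\infty}(U,\cnum)$ with $\delbar g=\eta$; concretely, first solve $\del_{\wbar}h=\eta_{\wbar}$ by the Cauchy--Green integral in $w$ for each fixed $t$, then correct by a $w$-holomorphic function of $(t,w)$ to arrange $\del_t g=\eta_t$, which is possible because the compatibility $\del_{\wbar}(\eta_t-\del_t h)=\del_{\wbar}\eta_t-\del_t\eta_{\wbar}=0$ holds.

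A direct Leibniz computation shows that multiplication by $e^g$ intertwines the two twisted mini-holomorphic structures on $E_{|U}$:
\[
\delbar_E^{\nu_2}\bigl(e^g s\bigr)=e^g\delbar_E(s)+e^g(\delbar g)\,s-\nu_2 e^g s=e^g\bigl(\delbar_E s+\eta s-\nu_2 s\bigr)=e^g\bigl(\delbar_E s-\nu_1 s\bigr)=e^g\delbar_E^{\nu_1}(s),
\]
using $\eta-\nu_2=-\nu_1$. Hence multiplication by $e^g$ defines an isomorphism of mini-holomorphic bundles $\bigl(E_{|U\setminus\{P\}},\delbar_E^{\nu_1}\bigr)\simeq\bigl(E_{|U\setminus\{P\}},\delbar_E^{\nu_2}\bigr)$. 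Pulling back by $\varphi$, and noting that the $S^1$-action on $\cnum^2$ preserves $\varphi$ so that $\varphi^{\ast}g$ is $S^1$-invariant, we obtain an $S^1$-equivariant holomorphic isomorphism $e^{\varphi^{\ast}g}\colon\Etilde^{\nu_1}_{P|\Utilde\setminus\{(0,0)\}}\simeq\Etilde^{\nu_2}_{P|\Utilde\setminus\{(0,0)\}}$.

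Since $g\in C^{\infty}(U,\cnum)$ and $\varphi\colon\cnum^2\to\real\times\cnum$ is smooth with $\varphi(0,0)=P$, the composition $\varphi^{\ast}g=g\circ\varphi$ is smooth on all of $\Utilde$, so $e^{\varphi^{\ast}g}$ is a nowhere vanishing smooth function on $\Utilde$. Multiplication by $e^{\varphi^{\ast}g}$ therefore defines a $C^{\infty}$-isomorphism $\Etilde^{\nu_1}_P\simeq\Etilde^{\nu_2}_P$ on all of $\Utilde$ extending the natural identification on the complement of $(0,0)$. Uniqueness is automatic: two smooth bundle morphisms on $\Utilde$ agreeing outside the single point $(0,0)$ must coincide by continuity. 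The main technical point I anticipate is the mini-complex $\delbar$-Poincar\'e lemma furnishing $g$; the remaining work is the formal Leibniz computation and the smooth extension of $\varphi^{\ast}g$ across $(0,0)$, which is immediate from smoothness of $g$ and $\varphi$.
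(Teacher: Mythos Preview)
Your construction of $g$ and the intertwining relation $\delbar_E^{\nu_2}\circ e^g=e^g\circ\delbar_E^{\nu_1}$ are correct, but the final step misidentifies which map is being extended. Multiplication by $e^{\varphi^{\ast}g}$ on $\Etilde'_P$ is a \emph{holomorphic} isomorphism between the two holomorphic structures; it is not the natural identification, which is the identity map on the underlying $C^{\infty}$-bundle $\varphi^{\ast}E$. Furthermore, asserting that multiplication by the smooth scalar $e^{\varphi^{\ast}g}$ defines a $C^{\infty}$-map $\Etilde^{\nu_1}_P\to\Etilde^{\nu_2}_P$ over $(0,0)$ is circular: a priori the two extensions are different $C^{\infty}$-bundles at the origin, and ``multiplication by a scalar'' only makes sense once they have already been identified there.

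Your argument can be repaired. Since $e^{\varphi^{\ast}g}$ is a holomorphic bundle isomorphism on $\Utilde\setminus\{(0,0)\}$ and the origin has complex codimension two, Hartogs extends it to a holomorphic isomorphism $\Psi\colon\Etilde^{\nu_1}_P\to\Etilde^{\nu_2}_P$ on all of $\Utilde$. On the punctured set the natural identification equals $e^{-\varphi^{\ast}g}\cdot\Psi$; as $e^{-\varphi^{\ast}g}$ is smooth and nowhere vanishing on $\Utilde$ and $\Psi$ is holomorphic, the product extends to the required $C^{\infty}$-isomorphism.

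The paper's proof is shorter and avoids solving for a primitive $g$ altogether. Setting $\nu_0=\nu_2-\nu_1$, one has on $\Etilde'_P$ the relation
\[
\delbar^{\nu_2}_{\Etilde'_P}=\delbar^{\nu_1}_{\Etilde'_P}-\bigl(\varphi^{\ast}(\nu_{0,t})\,\delbar\varphi^{\ast}(t)+\varphi^{\ast}(\nu_{0,\wbar})\,\delbar\varphi^{\ast}(\wbar)\bigr)\id,
\]
and the correction term is visibly a smooth $(0,1)$-form on all of $\Utilde$. Hence the $C^{\infty}$-bundle $\Etilde^{\nu_1}_P$ equipped with this perturbed $\delbar$-operator is itself an $S^1$-equivariant holomorphic extension of $(\Etilde'_P,\delbar^{\nu_2}_{\Etilde'_P})$; by uniqueness of the extension it coincides with $\Etilde^{\nu_2}_P$, so the identity map already furnishes the $C^{\infty}$-isomorphism. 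This route uses only that $\nu_0$ and $\varphi$ are smooth, whereas your route additionally invokes a $\delbar$-Poincar\'e lemma and Hartogs.
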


\begin{proof}
Set $\nu_0=\nu_{0,t}\,dt+\nu_{0,\wbar}\,d\wbar:=\nu_2-\nu_1$.
We have
$\delbar_E^{\nu_2}=
 \delbar_E^{\nu_1}-\nu_0\id_E$.
By the construction
(see \cite[Section~2.2]{Mochizuki-Yoshino}),
we have
$\delbar^{\nu_2}_{\Etilde'_P}
=\delbar^{\nu_1}_{\Etilde'_P}
-\bigl(
 \varphi^{\ast}(\nu_{0,t})\delbar\varphi^{\ast}(t)
+\varphi^{\ast}(\nu_{0,\wbar})\delbar\varphi^{\ast}(\wbar)
 \bigr)\id$.
Then, the claim of the lemma is clear.
\end{proof}

We set
$\Etilde_{P}:=\Etilde^{\nu}_P$
for $\nu\in C^{\infty}\big(U,\Omega^{0,1}\big)$
such that $\delbar\nu=\varrho$,
which is called the Kronheimer resolution of
$\big(E,\delbar_E\big)$ at~$P$.

\begin{Definition}\label{df;19.10.19.100}
A Hermitian metric $h$ of $E$
is called adapted at~$P$
if the induced metric~$\htilde_P$
of~$\Etilde'_P$ extends to a $C^{\infty}$-metric
of the Kronheimer resolution $\Etilde_{P}$.
If~$h$ is adapted at any point of~$Z$,
then $h$ is called an adapted metric of $\big(E,\delbar_E\big)$.
\end{Definition}

\subsubsection{Chern connections and Higgs fields}

Suppose that we are given a splitting
$TM/T_SM\lrarr TM$.
It induces the following decompositions:
\begin{gather}\label{eq;19.10.17.1}
 T^{\ast}M\otimes\cnum
\simeq
 \Omega^{1,0}_QM
\oplus
 \Omega^{0,1}_QM
\oplus
 T^{\ast}_SM\otimes\cnum,
\\
\label{eq;19.10.17.2}
 \Omega^{0,1}M
\simeq
 \Omega^{0,1}_QM\oplus T^{\ast}_SM\otimes\cnum,
\\
\label{eq;19.10.17.3}
 \Omega^{1,0}M
\simeq
 \Omega^{0,1}_QM\oplus T^{\ast}_SM\otimes\cnum.
\end{gather}

Let $(E,\delbar_E)$ be a $\varrho$-twisted mini-holomorphic bundle
on $M$. By~(\ref{eq;19.10.17.2}),
we obtain a decomposition
$\delbar_E=\delbar^S_E\oplus \delbar^Q_E$,
where
$\delbar^S_E(s)\in
 C^{\infty}\big(X,(T_SM\otimes\cnum)^{\lor}\big)$
and
$\delbar^Q_E(s)\in
 C^{\infty}\big(X,\Omega^{0,1}_QM\big)$.

Let $h$ be a Hermitian metric of $E$.
We obtain the differential operator
$\del_{E,h}\colon C^{\infty}(X,E)\lrarr C^{\infty}\big(X,\Omega^{1,0}M\otimes E\big)$
satisfying the condition
$\delbar_Mh(u,v)=h(\delbar_Eu,v)+h(u,\del_{E,h}v)$
for any $u,v\in C^{\infty}(X,E)$.
We also obtain the decomposition
$\del_{E,h}=\del_{E,h}^Q+\del_{E,h}^S$
induced by~(\ref{eq;19.10.17.3}).
For a mini-complex coordinate neighbourhood
$(U;t,w)$,
we obtain the operators
$\del_{E,h,w}$
(resp. $\del_{E,h,t}$) on $E$
induced by
$\del_{E,h}$ and $\del_w$
(resp.~$\del_t$).
\begin{Remark}
In \cite{Mochizuki-difference-modules}, $\del_{E,h,t}$ is denoted
as $\del_{E,h,t}'$.
\end{Remark}

By using~(\ref{eq;19.10.17.1}), we set
\[
 \nabla_h:=
 \delbar_{E}^Q+\del_{E,h}^Q
+\frac{1}{2}
 \bigl(
 \delbar^S_E
+\del^S_{E,h}
 \bigr),
\qquad
 \phi_h:=
 \frac{\sqrt{-1}}{2}
 \bigl(
 \delbar_E^S-\del_{E,h}^S
 \bigr).
\]
They are called the Chern connection and the Higgs field
of $(E,\delbar_E,h)$.
Note that they depend on the choice of a splitting
$TM/T_SM\lrarr TM$.

If $M$ is also equipped with a Riemannian metric $g_M$,
we shall use the splitting $TM/T_SM\lrarr TM$ induced by $g_M$.
Moreover, by the section $\eta$ in Section~\ref{subsection;19.10.19.2},
$T^{\ast}_{S,g_M}M$ is identified with
the product bundle $\real\times M$.
Hence, we regard $\phi_h$
as an anti-Hermitian endomorphism of $E$.
In particular,
if $g_M=dt\,dt+dw\,d\wbar$
on a mini-complex coordinate neighbourhood $(U;t,w)$,
the following holds for any $s\in C^{\infty}(U,E)$:
\begin{gather*}
 \nabla_h(s)
=(\del_{E,\wbar}s)\,d\wbar+(\del_{E,h,w}s)\,dw
+\frac{1}{2}\bigl(
 \del_{E,t}s+\del_{E,h,t}s
 \bigr)\,dt,
\\
 \phi_h(s)=
 \frac{\sqrt{-1}}{2}
 \bigl(
 \del_{E,t}s-\del_{E,h,t}s
 \bigr).
\end{gather*}

\subsection{Twisted monopoles in the locally Euclidean case}

\subsubsection{Twisted monopoles}

Let $(M,g_M)$ be an oriented Riemannian $3$-dimensional manifold.
Let $B$ be a real $2$-form on $M$.
Let $E$ be a vector bundle on $M$
equipped with a Hermitian metric $h$,
a unitary connection $\nabla$,
and an anti-Hermitian endomorphism $\phi$.
\begin{Definition}
Such a tuple $(E,h,\nabla,\phi)$ is called
a $B$-twisted monopole
if the following $B$-twisted Bogomolny equation is
satisfied:
\[
F(\nabla)=\ast\nabla\phi+\sqrt{-1}B\id_E.
\]
Here $F(\nabla)$ denotes the curvature of $\nabla$, and $\ast$ denotes the Hodge star operator.
\end{Definition}

Let $A$ and $f$ be a real $1$-form and an $\real$-valued $C^{\infty}$-function on $M$, respectively.
We set
$\nablatilde:=\nabla+\sqrt{-1}A\,\id$,
$\phitilde:=\phi+\sqrt{-1}f\,\id$
and
$\Btilde:=B+dA-\ast(df)$.
Then, the following is easy to see.
\begin{Lemma}
\label{lem;19.10.19.30}
$(E,h,\nabla,\phi)$ is a $B$-twisted monopole
if and only if
$\big(E,h,\nablatilde,\phitilde\big)$
is a $\Btilde$-twisted monopole.
\end{Lemma}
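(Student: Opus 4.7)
The plan is to verify the equivalence by a direct computation on the three terms of the twisted Bogomolny equation, using that the additional summands $\sqrt{-1}A\,\id$ and $\sqrt{-1}f\,\id$ are central in $\End(E)$.

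First I would observe that $\nablatilde$ remains a unitary connection because $A$ is real, and $\phitilde$ remains an anti-Hermitian endomorphism because $f$ is real, so it makes sense to ask whether $(E,h,\nablatilde,\phitilde)$ is a $\Btilde$-twisted monopole. Next I would compute the curvature: because $\sqrt{-1}A\,\id_E$ is a scalar-valued $1$-form, the induced connection on $\End(E)$ from $\nablatilde$ coincides with that from $\nabla$, and one gets
\[
F(\nablatilde)=F(\nabla)+\sqrt{-1}\,dA\,\id_E.
\]

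Then I would compute $\nablatilde\phitilde$, again using centrality: the induced $\End(E)$-connection gives $\nablatilde(\phi)=\nabla(\phi)$, while $\id_E$ is parallel for any unitary connection, so
\[
\nablatilde\phitilde=\nabla\phi+\sqrt{-1}\,df\,\id_E.
\]
Applying the Hodge star,
\[
\ast\nablatilde\phitilde=\ast\nabla\phi+\sqrt{-1}\ast(df)\,\id_E.
\]
Adding $\sqrt{-1}\Btilde\,\id_E=\sqrt{-1}(B+dA-\ast(df))\,\id_E$, the $\sqrt{-1}\ast(df)\,\id_E$ terms cancel and the $\sqrt{-1}\,dA\,\id_E$ term moves to match $F(\nablatilde)-F(\nabla)$. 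Thus
\[
F(\nablatilde)=\ast\nablatilde\phitilde+\sqrt{-1}\Btilde\,\id_E
\]
is equivalent to
\[
F(\nabla)=\ast\nabla\phi+\sqrt{-1}B\,\id_E,
\]
which is the lemma.

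There is no essential obstacle here; the proof is just a routine cancellation. The only point worth being careful about is the remark that adding a central $1$-form to a connection on $E$ does not change the induced connection on $\End(E)$, so that $\nablatilde\phi=\nabla\phi$ and $\nablatilde(\id_E)=0$; after that everything follows by linearity of $F$ and $\ast$.
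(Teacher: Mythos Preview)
Your proof is correct and is exactly the routine direct computation the paper has in mind; the paper does not give a proof at all beyond the remark ``the following is easy to see,'' and your cancellation argument is the natural way to make that remark explicit.
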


\begin{Remark}\label{rem;19.10.19.31}
If $M$ is compact,
any real $2$-form $B$ on $M$ is expressed as
$B=dA-\ast df+B_0$,
where $A$ is a real $1$-form,
$f$ is a $\real$-valued $C^{\infty}$-function,
and $B_0$ is a harmonic $1$-form.
Indeed,
let~$G$ denote the Green operator
for the Laplace-Beltrami operator
on the space of $2$-forms on~$M$.
Then, $B-(d^{\ast}d+dd^{\ast})G(B)$ is a harmonic $2$-form,
and $G(B)$ is $C^{\infty}$.
We can also deduce that
for any point $P\in M$,
there exists a neighbourhood $M_P$ of $P$
such that $B_{|M_P}=dA_P-\ast df_P$
for a real $1$-form $A_P$ and an $\real$-valued
$C^{\infty}$-function $f_P$ on~$M_P$.
\end{Remark}

\subsubsection[Twisted monopoles and twisted mini-holomorphic bundles in the locally Euclidean case]{Twisted monopoles and twisted mini-holomorphic bundles\\ in the locally Euclidean case}
\label{subsection;19.10.19.32}

Suppose that $M$ is also equipped with a mini-complex structure. Moreover, we assume that $M$ is a locally Euclidean,
i.e.,
for each $P\in M$,
there exists a mini-complex coordinate neighbourhood
$(U;t,w)$ of $P$
such that the Riemannian metric of $M$ on $U$
is $dt\,dt+dw\,d\wbar$.
Note that $d\eta=0$
for the global trivialization $\eta$
of $T_{S,g_M}^{\ast}M$
in Section~\ref{subsection;19.10.19.2}.
By~(\ref{eq;19.10.18.1}),
for any complex vector bundle $\nbigv$ on $M$,
we obtain the decomposition
\begin{gather}\label{eq;19.10.19.3}
\nbigv\otimes
\bigwedge^2\bigl(
 T^{\ast}M\otimes\cnum
 \bigr)
=\bigl(
 \nbigv\otimes
 \Omega^{1,0}_QM\wedge\eta
\bigr)
\oplus
 \bigl(
 \nbigv\otimes
 \Omega^{0,1}_QM\wedge\eta
 \bigr)
\oplus
 \bigl(
 \nbigv\otimes
 \Omega^{1,1}_QM
\bigr),
\end{gather}
where
$\Omega^{1,1}_QM:=
 \Omega_Q^{1,0}M\wedge
 \Omega_Q^{0,1}M$.
For any section $s$ of
$\nbigv\otimes
 \bigwedge^2\bigl(
 T^{\ast}M\otimes\cnum
 \bigr)$,
we obtain the decomposition
$s=s^{(1,0),\eta}+s^{(0,1),\eta}+s^{(1,1)}$
according to (\ref{eq;19.10.19.3}).
In particular,
we obtain the decomposition
$B=B^{(1,0),\eta}
+B^{(0,1),\eta}+B^{(1,1)}$.
Because $B$ is real,
$B^{(1,1)}$ is also real,
and $B^{(1,0),\eta}=\overline{B^{(0,1),\eta}}$ holds.

We can check the following lemma
by a direct computation.
\begin{Lemma}
Let $(E,h,\nabla,\phi)$ be a $B$-twisted monopole on $M$.
We have the decomposition
$\nabla=\nabla_Q^{1,0}+\nabla_Q^{0,1}+\nabla_S$
induced by {\rm(\ref{eq;19.10.18.1})}.
We set
$\delbar_{E}:=\nabla_Q^{0,1}+\nabla_S-\sqrt{-1}\phi \eta$.
Then, $(E,\delbar_E)$
is a $\sqrt{-1}B^{(0,1),\eta}$-twisted mini-holomorphic bundle.
\end{Lemma}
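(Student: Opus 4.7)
The plan is to verify both defining properties of a $\sqrt{-1}B^{(0,1),\eta}$-twisted mini-holomorphic structure, namely the Leibniz rule and the curvature identity $\delbar_E\circ\delbar_E=\sqrt{-1}B^{(0,1),\eta}\id_E$, by reducing to a direct local computation in a mini-complex coordinate neighbourhood $(U;t,w)$ where $g_M=dt\,dt+dw\,d\wbar$ (such neighbourhoods cover $M$ by the locally Euclidean hypothesis). In such coordinates $\eta=dt$, the decomposition $\Omega^{0,1}M\simeq\Omega^{0,1}_QM\oplus T^{\ast}_{S,g_M}M\otimes\cnum$ is spanned by $d\wbar$ and $dt$, and $\delbar_E$ takes the concrete form
\[
 \delbar_E(s)=(\nabla_{\wbar}s)\,d\wbar+\bigl((\nabla_t-\sqrt{-1}\phi)s\bigr)\,dt.
\]

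First I would verify the Leibniz rule: applying the above expression to $fs$ and using the standard Leibniz rule for $\nabla_{\wbar}$ and $\nabla_t$ immediately yields $\delbar_E(fs)=f\delbar_E(s)+(\del_{\wbar}f\,d\wbar+\del_tf\,dt)\otimes s=f\delbar_E(s)+(\delbar_Mf)\otimes s$.

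Next I would compute $\delbar_E\circ\delbar_E$. Since $\Omega^{0,2}M$ is locally spanned by $d\wbar\wedge dt$, applying $\delbar_E$ once more and using that $d\wbar$ and $dt$ are $\delbar_M$-closed gives, after standard sign bookkeeping,
\[
 \delbar_E^2(s)=\bigl(\bigl[\nabla_{\wbar},\nabla_t-\sqrt{-1}\phi\bigr]s\bigr)\,d\wbar\wedge dt
 =\bigl(F(\nabla)(\del_{\wbar},\del_t)-\sqrt{-1}\nabla_{\wbar}\phi\bigr)s\cdot d\wbar\wedge dt.
\]
Here I use that $[\del_{\wbar},\del_t]=0$ and that $[\nabla_{\wbar},\phi]=\nabla_{\wbar}\phi$ since $\phi$ is an endomorphism.

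Finally I would extract the $d\wbar\wedge dt$-component from the $B$-twisted Bogomolny equation $F(\nabla)=\ast\nabla\phi+\sqrt{-1}B\id_E$. Using $\ast dt=\tfrac{\sqrt{-1}}{2}dw\wedge d\wbar$, $\ast dw=\sqrt{-1}dt\wedge dw$, $\ast d\wbar=-\sqrt{-1}dt\wedge d\wbar$, the $(d\wbar,dt)$-component of $\ast\nabla\phi$ equals $\sqrt{-1}(\nabla_{\wbar}\phi)\,d\wbar\wedge dt$, while the $(d\wbar,dt)$-component of $B$ is precisely $B^{(0,1),\eta}$. Comparing yields $F(\nabla)(\del_{\wbar},\del_t)-\sqrt{-1}\nabla_{\wbar}\phi=\sqrt{-1}B^{(0,1),\eta}\id_E$ as coefficients of $d\wbar\wedge dt$, so $\delbar_E^2=\sqrt{-1}B^{(0,1),\eta}\id_E$ on $U$. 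Since the mini-complex coordinate neighbourhoods in question cover $M$ and the identity is independent of the choice of local coordinates, the global conclusion follows. The only real difficulty is keeping the Hodge star conventions and wedge-product signs consistent; once these are fixed there is no conceptual obstacle.
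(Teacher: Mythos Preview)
Your proposal is correct and is precisely the ``direct computation'' the paper refers to (the paper does not spell out the details). Your local expression for $\delbar_E$, the verification of the Leibniz rule, the commutator computation for $\delbar_E^2$, and the extraction of the $d\wbar\wedge dt$-component from the Bogomolny equation via the Hodge star are all sound; the only care needed is the sign/ordering convention for $d\wbar\wedge dt$ versus $dt\wedge d\wbar$ when matching with the paper's notation $\varrho=\varrho_0\,dt\,d\wbar$, which you have handled consistently.
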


Conversely,
let $(E,\delbar_E)$ be a $\varrho$-twisted mini-holomorphic bundle
on $M$.
Let $h$ be a Hermitian metric of $E$.
We obtain the Chern connection $\nabla_h$
and the Higgs field $\phi_h$.

\begin{Lemma}
\label{lem;19.10.20.2}
We have
$\bigl(
 F(\nabla_h)-\ast\nabla_h\phi_h
 \bigr)^{(0,1),\eta}
=\varrho\id_E$
and
$\bigl(
 F(\nabla_h)-\ast\nabla_h\phi_h
 \bigr)^{(1,0),\eta}
=-\overline{\varrho}\id_E$.
\end{Lemma}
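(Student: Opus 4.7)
The plan is to reduce to a direct local computation in a mini-complex coordinate neighbourhood $(U;t,w)$ where $g_M = dt\,dt+dw\,d\wbar$. On such a chart the formulas displayed just before the lemma give $\nabla_h = \del_{E,\wbar}\,d\wbar + \del_{E,h,w}\,dw + \tfrac{1}{2}(\del_{E,t}+\del_{E,h,t})\,dt$ and $\phi_h = \tfrac{\sqrt{-1}}{2}(\del_{E,t}-\del_{E,h,t})$ (viewed as an endomorphism via $\eta=dt$). Expanding $F(\nabla_h)=\nabla_h^2$ and $\nabla_h\phi_h=[\nabla_h,\phi_h]$ produces explicit linear combinations of commutators of the four operators $\del_{E,\wbar}$, $\del_{E,h,w}$, $\del_{E,t}$, $\del_{E,h,t}$. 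Applying the Hodge star via $\ast d\wbar = -\sqrt{-1}\,dt\wedge d\wbar$, $\ast dw = \sqrt{-1}\,dt\wedge dw$, $\ast dt = \tfrac{\sqrt{-1}}{2}\,dw\wedge d\wbar$ and subtracting, systematic cancellation leaves the $d\wbar\wedge dt$-coefficient of $F(\nabla_h)-\ast\nabla_h\phi_h$ equal to $[\del_{E,\wbar},\del_{E,t}]$ and the $dw\wedge dt$-coefficient equal to $[\del_{E,h,w},\del_{E,h,t}]$. The $dw\wedge d\wbar$-component is not constrained by the lemma and need not be examined.

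Writing $\varrho = \varrho_0\,dt\wedge d\wbar$, the condition $\delbar_E\circ\delbar_E = \varrho\,\id_E$ unwinds exactly as in the paragraph preceding Lemma~\ref{lem;19.10.19.110} to $[\del_{E,t},\del_{E,\wbar}] = \varrho_0\,\id_E$, hence $[\del_{E,\wbar},\del_{E,t}] = -\varrho_0\,\id_E$. Reinserting this into the $d\wbar\wedge dt$-coefficient, the first identity $\bigl(F(\nabla_h)-\ast\nabla_h\phi_h\bigr)^{(0,1),\eta} = \varrho\,\id_E$ follows.

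The heart of the argument is the second identity, for which one needs $[\del_{E,h,w},\del_{E,h,t}] = \overline{\varrho_0}\,\id_E$. Since $\del_{E,h,w}$ and $\del_{E,h,t}$ are the $h$-Chern counterparts of $\del_{E,\wbar}$ and $\del_{E,t}$, this is the $h$-adjoint of the twisting condition on $\delbar_E$ rather than a direct instance of it, and I expect this to be the main obstacle. I would derive it by differentiating the two Chern relations $\del_\wbar h(u,v) = h(\del_{E,\wbar}u,v) + h(u,\del_{E,h,w}v)$ and $\del_t h(u,v) = h(\del_{E,t}u,v) + h(u,\del_{E,h,t}v)$: applying $\del_t$ to the first, $\del_\wbar$ to the second and subtracting, the identity $[\del_t,\del_\wbar]=0$ on $C^{\infty}$-functions kills the left-hand side, the first-order cross terms cancel in pairs, and what remains is $h([\del_{E,t},\del_{E,\wbar}]u,v) + h(u,[\del_{E,h,t},\del_{E,h,w}]v) = 0$ for all local sections $u,v$. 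Substituting $[\del_{E,t},\del_{E,\wbar}]=\varrho_0\,\id_E$ and moving the scalar through the Hermitian form yields $[\del_{E,h,w},\del_{E,h,t}]=\overline{\varrho_0}\,\id_E$, and then $\bigl(F(\nabla_h)-\ast\nabla_h\phi_h\bigr)^{(1,0),\eta} = \overline{\varrho_0}\,dw\wedge dt\,\id_E = -\overline{\varrho}\,\id_E$ as required.
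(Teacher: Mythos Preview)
Your proposal is correct and follows essentially the same approach as the paper's proof. The paper packages your key derivation of $[\del_{E,h,w},\del_{E,h,t}]=\overline{\varrho_0}\,\id_E$ as the single structural statement $\del_{E,h}\circ\del_{E,h}=-\overline{\varrho}\,\id_E$ (deduced from $\delbar_E\circ\delbar_E=\varrho\,\id_E$ by exactly the $h$-adjoint argument you spell out), and then says the lemma follows ``by computations''; your local-coordinate expansion of $F(\nabla_h)-\ast\nabla_h\phi_h$ is precisely that computation made explicit.
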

\begin{proof} We have
$\delbar_E=\nabla_{h,Q}^{0,1}+\nabla_{h,S}-\sqrt{-1}\phi_h\eta$
and
$\del_{E,h}=\nabla_{h,Q}^{1,0}+\nabla_{h,S}+\sqrt{-1}\phi_h\eta$.
Because
$\delbar_E\circ\delbar_E=\varrho\id$,
we obtain
$\del_{E,h}\circ\del_{E,h}=-\overline{\varrho}\id$.
Then, we obtain the claim of the lemma by computations.
\end{proof}

\begin{Corollary}\label{cor;19.10.20.1}
There exists a real $2$-form $B$
such that
$(E,h,\nabla_h,\phi_h)$ is a $B$-twisted monopole
if and only if
the trace-free part of
$\bigl(F(\nabla_h)-\ast\nabla_h\phi_h\bigr)^{(1,1)}$ is $0$,
i.e., there exists a real $2$-form $\varpi$
such that
$\bigl(F(\nabla_h)-\ast\nabla_h\phi_h\bigr)^{(1,1)}
=\sqrt{-1}\varpi\id$.
In that case,
$B=-\sqrt{-1}(\varrho-\overline{\varrho})+\varpi$.
\end{Corollary}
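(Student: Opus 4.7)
The plan is to reduce the corollary to a direct bookkeeping consequence of Lemma~\ref{lem;19.10.20.2} by decomposing the $B$-twisted Bogomolny equation
\[
F(\nabla_h)-\ast\nabla_h\phi_h=\sqrt{-1}B\id_E
\]
with respect to the type decomposition~(\ref{eq;19.10.19.3}). Write $B=B^{(1,0),\eta}+B^{(0,1),\eta}+B^{(1,1)}$; since $B$ is real one has $B^{(1,0),\eta}=\overline{B^{(0,1),\eta}}$ and $B^{(1,1)}$ is a real $2$-form. The equation then splits into three independent identities, one for each summand of~(\ref{eq;19.10.19.3}).

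The first step is to apply Lemma~\ref{lem;19.10.20.2}, which identifies the $(0,1),\eta$ and $(1,0),\eta$ components of $F(\nabla_h)-\ast\nabla_h\phi_h$ as $\varrho\id_E$ and $-\overline{\varrho}\id_E$. Matching these with the corresponding components of $\sqrt{-1}B\id_E$ uniquely determines
\[
B^{(0,1),\eta}=-\sqrt{-1}\varrho,\qquad B^{(1,0),\eta}=\sqrt{-1}\overline{\varrho},
\]
and these two values are complex-conjugate to each other, hence consistent with the reality of $B$. In particular, the $(0,1),\eta$ and $(1,0),\eta$ pieces of the twisted Bogomolny equation impose \emph{no} constraint on $(E,h,\nabla_h,\phi_h)$; they simply prescribe the off-diagonal type components of $B$.

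It then remains to analyze the $(1,1)$ piece, $(F(\nabla_h)-\ast\nabla_h\phi_h)^{(1,1)}=\sqrt{-1}B^{(1,1)}\id_E$. This admits a solution $B^{(1,1)}$ (a real $2$-form) if and only if the left-hand side is a scalar multiple of $\id_E$, which is exactly the vanishing of its trace-free part. Because $\nabla_h$ is unitary and $\phi_h$ is anti-Hermitian, the endomorphism-valued $2$-form $F(\nabla_h)-\ast\nabla_h\phi_h$ is anti-Hermitian, so writing the scalar multiple as $\sqrt{-1}\varpi\id_E$ automatically forces $\varpi$ real. Summing the three pieces gives $B=\sqrt{-1}\overline{\varrho}-\sqrt{-1}\varrho+\varpi=-\sqrt{-1}(\varrho-\overline{\varrho})+\varpi$, exactly as stated. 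I do not anticipate any genuine obstacle: the entire argument is a type-decomposition consistency check, with Lemma~\ref{lem;19.10.20.2} supplying precisely the information needed to pin down the off-diagonal components and isolate the $(1,1)$ condition.
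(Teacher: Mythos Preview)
Your argument is correct and is exactly the intended one: the paper states this result as an immediate corollary of Lemma~\ref{lem;19.10.20.2} without giving a separate proof, and your type-decomposition bookkeeping is precisely how the corollary follows from that lemma.
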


\begin{Remark}If the condition in Corollary~\ref{cor;19.10.20.1}
is satisfied,
$\big(E,\delbar_E,h\big)$ is also called
a $B$-twisted monopole.
\end{Remark}

\subsubsection{Dirac type singularity}

Let $Z$ be a discrete subset of $M$.
Let $B$ be a real $2$-form on $M$.
Let $(E,h,\nabla,\phi)$ be a $B$-twisted monopole
on $M\setminus Z$.
Let $(E,\delbar_E)$ be the underlying
$\sqrt{-1}B^{(0,1),\eta}$-twisted mini-holomorphic bundle.

\begin{Definition}\label{df;19.10.19.20}
A point $P\in Z$ is called Dirac type singularity
of $(E,h,\nabla,\phi)$
if the following conditions are satisfied:
\begin{itemize}\itemsep=0pt
\item
$P$ is Dirac type singularity of
$\big(E,\delbar_E\big)$.
\item
$h$ is an adapted metric of
$\big(E,\delbar_E\big)$
 in the sense of Definition~\ref{df;19.10.19.100}.
\end{itemize}

We say that $(E,h,\nabla,\phi)$ is a $B$-twisted monopole
with Dirac type singularity on $(M;Z)$
if any point $P\in Z$ is Dirac type singularity of $(E,h,\nabla,\phi)$.
\end{Definition}

\begin{Lemma}$P$ is Dirac type singularity of
$(E,h,\nabla,\phi)$
 if and only if
 there exists a neighbourhood $M_P$ of $P$
 in~$M$ such that
 $|\phi_Q|_h=O\bigl(d(P,Q)^{-1}\bigr)$
 for $Q\in M_P\setminus\{P\}$.
\end{Lemma}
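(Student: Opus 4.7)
The plan is to reduce to the untwisted, locally Euclidean case and then analyse both directions on the Kronheimer resolution, invoking the analytic input of \cite{Mochizuki-Yoshino} for the nontrivial direction. The problem being local near $P$, I choose a mini-complex coordinate neighbourhood $(U;t,w)$ of $P$ with $(t(P),w(P))=(0,0)$ and $g_M=dt\,dt+dw\,d\wbar$; then $d(P,Q)^2=t^2+|w|^2$ on $U$.

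I first reduce to the mini-holomorphic case $\varrho=0$. Picking a smooth $\nu\in C^{\infty}(U,\Omega^{0,1})$ with $\delbar\nu=\varrho$, the operator $\delbar_E^{\nu}=\delbar_E-\nu\,\id$ is mini-holomorphic, its scattering map differs from that of $\delbar_E$ by the invertible factor $\exp(\int_{-\epsilon}^{\epsilon}\nu_t)$, and the Kronheimer resolutions agree canonically. The Higgs field $\phi_h^{\nu}$ built from $(E,\delbar_E^{\nu},h)$ differs from $\phi_h$ by the bounded smooth endomorphism $\frac{\sqrt{-1}}{2}(\nu_t-\overline{\nu_t})\,\id$, so the estimate $|\phi|_h=O(d(P,Q)^{-1})$ is preserved. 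Both Definition \ref{df;19.10.19.20} and the pointwise bound are thus invariant under this replacement, reducing the question to $\varrho=0$.

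Next I pass to the Kronheimer resolution $\varphi\colon\cnum^2\to\real\times\cnum$, $\varphi(z_1,z_2)=(|z_1|^2-|z_2|^2,2z_1z_2)$, on $\Utilde=\varphi^{-1}(U)$. A direct calculation shows $d(P,\varphi(z_1,z_2))=|z_1|^2+|z_2|^2=:R^2$ and that the horizontal lift of $\del_t$ (with respect to the natural $S^1$-action) equals $(2R^2)^{-1}(z_1\del_{z_1}+\bar z_1\del_{\bar z_1}-z_2\del_{z_2}-\bar z_2\del_{\bar z_2})$, namely $R^{-2}$ times a smooth vector field on $\Utilde$. For the forward direction, if $P$ is Dirac type in the sense of Definition \ref{df;19.10.19.20}, then $(\Etilde_P,\delbar_{\Etilde_P},\htilde_P)$ is a smooth $S^1$-equivariant Hermitian holomorphic bundle on all of $\Utilde$; in particular the $\del_t$-components of $\delbar_E$ and of the Chern connection pull back to $R^{-2}$ times smooth operators on $\Etilde_P$, whence $|\phi_h|_h=O(R^{-2})=O(d(P,Q)^{-1})$.

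For the converse, I would invoke the structure theorem for singular monopoles on mini-complex 3-manifolds proved in \cite{Mochizuki-Yoshino}, which asserts that if $(E,h,\nabla,\phi)$ satisfies the Bogomolny equation together with $|\phi|_h=O(d(P,Q)^{-1})$, then the underlying mini-holomorphic bundle has a meromorphic scattering map at $P$ and $h$ extends smoothly to the Kronheimer resolution, i.e.\ $P$ is Dirac type in the sense of Definition \ref{df;19.10.19.20}. The converse direction is the main obstacle: one must pass from a pointwise bound on $\phi$ to a structural description of the monopole near $P$, which requires detailed analysis of the Bogomolny equation, suitable gauge choices, and comparison with the Dirac model. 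Once this analytic input is available the rest of the argument, as sketched above, is routine.
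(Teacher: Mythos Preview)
Your strategy is sound but your reduction step has a gap. Solving $\delbar\nu=\varrho$ and passing to $\delbar_E^{\nu}$ kills the twist on the mini-holomorphic side, so $\varrho$ becomes $0$; however the resulting object $(E,h,\nabla_h^{\nu},\phi_h^{\nu})$ is only a $\Btilde$-twisted monopole with $\Btilde=\Btilde^{(1,1)}$ (by Lemma~\ref{lem;19.10.19.101} and Corollary~\ref{cor;19.10.20.1}), not an untwisted one. The result of \cite{Mochizuki-Yoshino} you invoke for the converse direction is stated for genuine monopoles satisfying $F(\nabla)=\ast\nabla\phi$, so it does not apply directly after your reduction. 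You would need one further local reduction to absorb $\Btilde^{(1,1)}$.

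The paper handles this more economically by working on the monopole side rather than the mini-holomorphic side: near $P$ one writes the full real $2$-form as $B_{|M_P}=dA_P-\ast df_P$ (Remark~\ref{rem;19.10.19.31}) and applies Lemma~\ref{lem;19.10.19.30} to pass in one step to an honest untwisted monopole $(E,h,\nabla-\sqrt{-1}A_P\id,\phi-\sqrt{-1}f_P\id)$. Since $f_P$ is bounded, the estimate $|\phi|_h=O(d(P,Q)^{-1})$ transfers, and both implications then follow from a single citation of \cite{Mochizuki-Yoshino}. Your forward direction via the explicit Kronheimer lift of $\partial_t$ is correct and gives the same conclusion, but the paper's route avoids that computation entirely by packaging both directions into the untwisted case.
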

\begin{proof} If $M_P$ is sufficiently small,
there exists a real $1$-form $A_P$
and an $\real$-valued $C^{\infty}$-function~$f_P$
such that $B_{|M_P}=dA_P-\ast df_P$.
Then,
$(\Etilde,\htilde):=
(E,h)_{|M_P\setminus\{P\}}$
with
$\nablatilde:=\nabla-\sqrt{-1}A_P\id_E$
and $\phitilde:=\phi-\sqrt{-1}f_P\id_E$
is a monopole on $M_P\setminus\{P\}$.
If $P$ is Dirac type singularity of
$(E,h,\nabla,\phi)$,
then $P$ is Dirac type singularity of
$(\Etilde,\htilde,\nablatilde,\phitilde)$.
According to~\cite{Mochizuki-Yoshino},
it is equivalent to
$|\phitilde_Q|_h=O\bigl(d(P,Q)^{-1}\bigr)$
around any point $P\in Z$,
which is equivalent to
$|\phi_Q|_h=O\bigl(d(P,Q)^{-1}\bigr)$
around any point $P\in Z$.
\end{proof}

\subsection{Twisted difference modules}

Let $\Gamma_0\subset \cnum$ be a lattice.
We put $T:=\cnum/\Gamma_0$.
Take any $\gminia\in T$,
and define the automorphism $\Phi$ of $T$
by $\Phi(z)=z+\gminia$.
Let $\gbigl$ be a holomorphic line bundle
of degree $0$ on $T$.

A parabolic $\gbigl$-twisted difference module
$V_{\ast}=(V,(\vectau_P,\vecnbigl_P)_{P\in D})$
on $T$
consists of the following data:
\begin{itemize}\itemsep=0pt
\item
A locally free $\nbigo_T$-module $V$
equipped with an isomorphism
$V\otimes\nbigo_T(\ast D)
\simeq (\Phi^{\ast})^{-1}(V)\otimes\gbigl\otimes\nbigo_T(\ast D)$,
where $D$ is a finite subset of $T$.
\item
A sequence
$0\leq \tau_{P,1}<\tau_{P,2}<\cdots<\tau_{P,m(P)}<1$
 for each $P\in D$.
\item
Lattices $\nbigl_{P,i}$ $(i=1,\ldots,m(P)-1)$
of the stalk $V(\ast D)_{P}$ at each $P\in D$.
We formally set
$\nbigl_{P,0}:=V_{P}$
and $\nbigl_{P,m(P)}:=\bigl(
 (\Phi^{\ast})^{-1}(V)\otimes\gbigl\bigr)_P$
 at each $P\in D$.
\end{itemize}
The degree of $V_{\ast}$ is defined
by the formula~(\ref{eq;19.8.10.1}), i.e.,{\samepage
\[
 \deg(V_{\ast}):=
 \deg(V)
+\sum_{P\in D}
 \sum_{i=1}^{m(P)}
 (1-\tau_{P,i})\deg(\nbigl_{P,i},\nbigl_{P,i-1}).
\]
We set
$\mu(V_{\ast}):=\deg(V_{\ast})/\rank(V)$.}

For any $\nbigo_T(\ast D)$-submodule
$0\neq V'\subset V$
such that
$V'(\ast D)\simeq (\Phi^{\ast})^{-1}(V')(\ast D)$,
we obtain lattices
$\nbigl'_{P,i}$ of $V'(\ast D)_P$
by setting
$\nbigl'_{P,i}:=\nbigl_{P,i}\cap V'(\ast D)_P$
in $V(\ast D)_P$,
and we obtain a parabolic
$\gbigl$-twisted
$\gminia$-difference module
$V'_{\ast}=(V',(\vectau_P,\vecnbigl'_P)_{P\in D})$.
Such $V'_{\ast}$
is called a parabolic $\gminia$-difference submodule
of $V_{\ast}$.

\begin{Definition}
$V_{\ast}$
is called stable (resp. semistable)
if
\[
\mu(V'_{\ast})
<\mu(V_{\ast})
\qquad
 \bigl(
\mbox{\rm resp. }
\mu(V'_{\ast})
\leq\mu(V_{\ast})
\bigr)
\]
for any
parabolic $\gminia$-difference submodules $V'$
such that $0<\rank V'<\rank V$.
It is called polystable if it is semistable
and a direct sum of stable objects.
\end{Definition}

\subsubsection{Example}

It is easy to construct examples of parabolic difference modules.

\begin{Lemma}\label{lem;19.10.19.300}
For any holomorphic line bundle $\gbigl$ of degree $0$,
and for any $d\in\real$,
there exists a parabolic $\gbigl$-difference module $V_{\ast}$
of rank one
such that $\deg(V_{\ast})=d$.
\end{Lemma}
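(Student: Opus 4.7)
The plan is to construct $V_\ast$ explicitly, encoding the integer part of $d$ in the degree of the underlying line bundle and the fractional part in a parabolic weight. Write $d = d_0 + \alpha$ with $d_0 \in \seisuu$ and $\alpha \in [0, 1)$, and fix any line bundle $V$ on $T$ of degree $d_0$ (for instance $V = \nbigo_T(d_0 [O])$ for some base point $O$).

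The key auxiliary object is the degree-zero line bundle $L := (\Phi^{\ast})^{-1}V \otimes \gbigl \otimes V^{-1}$, which by Abel--Jacobi can be written as $L \simeq \nbigo_T(R - S)$ for some $R, S \in T$. In the generic case $R \ne S$, I would set $D := \{R, S\}$ and use the canonical inclusion $L \hookrightarrow \nbigo_T(\ast D)$ to manufacture an isomorphism $V(\ast D) \simeq (\Phi^{\ast})^{-1}V \otimes \gbigl \otimes \nbigo_T(\ast D)$. Unwinding stalks in local parameters $z_R, z_S$ identifies $\nbigl_{R, 1}$ with $z_R^{-1}V_R$ and $\nbigl_{S, 1}$ with $z_S V_S$ inside $V(\ast D)_R$ and $V(\ast D)_S$ respectively, so $\deg(\nbigl_{R, 1}, \nbigl_{R, 0}) = +1$ and $\deg(\nbigl_{S, 1}, \nbigl_{S, 0}) = -1$. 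Choosing weights $\tau_{R, 1} = 0$ and $\tau_{S, 1} = \alpha$, the formula~(\ref{eq;19.8.10.1}) yields $\deg V_\ast = d_0 + 1 - (1 - \alpha) = d_0 + \alpha = d$.

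The step requiring the most care is the degenerate case $L \simeq \nbigo_T$, in which the above prescription would force $R = S$, producing only one marked point and no lever to produce the fractional $\alpha$. Here $V \simeq (\Phi^{\ast})^{-1}V \otimes \gbigl$ as honest line bundles, so for $\alpha = 0$ one may simply take $D = \emptyset$ and $V_\ast = V$, with degree $d_0 = d$. For $\alpha > 0$ I would instead take $D = \{P\}$ for any point $P$, set $m(P) = 2$, and insert the intermediate lattice $\nbigl_{P, 1} := z_P^{-1}V_P$ between $\nbigl_{P, 0} = V_P$ and $\nbigl_{P, 2} = V_P$ (the latter identified with $((\Phi^{\ast})^{-1}V \otimes \gbigl)_P$ via the trivialization of $L$). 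The consecutive lattice degrees are $+1$ and $-1$, and the weights $\tau_{P, 1} = 0 < \tau_{P, 2} = \alpha$ again yield $\deg V_\ast = d_0 + \alpha = d$, completing all cases.
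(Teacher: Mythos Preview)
Your construction is correct. The paper takes a slightly different route: it fixes $V=\nbigo_T$ throughout, trivializes $\gbigl$ by writing $\gbigl\bigl(\sum_i\ell_iP_i\bigr)=\nbigo_T$ with $\sum_i\ell_i=0$, and then adds one further point $P_0\notin\{P_1,\dots,P_n\}$ with $m(P_0)=2$ and an intermediate lattice $\nbigl_{P_0,1}=\nbigo_T(\ell P_0)_{P_0}$; the resulting degree is $\ell(\tau_{P_0,2}-\tau_{P_0,1})$, which realizes every real number by choosing $\ell\in\seisuu$ large enough in absolute value and adjusting the weight gap. The trade-off is this: by putting the integer part of $d$ into $\deg V$ you keep the lattice jumps $\pm1$ and the weights natural, but you are forced into a case analysis on whether $L=(\Phi^{\ast})^{-1}V\otimes\gbigl\otimes V^{-1}$ is trivial; the paper's choice $V=\nbigo_T$ avoids any case split at the cost of allowing an arbitrary integer jump $\ell$ at the auxiliary point. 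Both arguments are elementary and of comparable length.
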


\begin{proof}
There exist $P_1,\ldots,P_{n}\in T$
and $\ell_i\in\seisuu$
such that
$\gbigl\big(\sum_{i=1}^n\ell_i P_i\big)=\nbigo_T$.
Note that $\sum\ell_i=0$.
We take $P_0\in T\setminus\{P_1,\ldots,P_{n}\}$.
We set $D:=\{P_0,P_1,\ldots,P_n\}$.
We set $V:=\nbigo_T$.
By our choice of $D$,
there exists an isomorphism
$F\colon V(\ast D)\simeq (\Phi^{\ast})^{-1}(V)\otimes\gbigl(\ast D)$.
We set $m(P_i)=1$
and $\tau_{P_i,1}=0$ for $i=1,\ldots,n$.
We set $m(P_0)=2$,
and we choose $0\leq \tau_{P_0,1}<\tau_{P_0,2}<1$.
We set $\nbigl_{P_0,1}=\nbigo_{T}(\ell P_0)_{P_0}$
for an integer $\ell$.
Then, we obtain a parabolic $\gbigl$-twisted
difference module
$V_{\ast}^{(\ell,\tau_{P_0,1},\tau_{P_0,2})}$
for which
$\deg\big(V_{\ast}^{(\ell,\tau_{P_0,1},\tau_{P_0,2})}\big)
=(\tau_{P_0,2}-\tau_{P_0,1})\ell$.
Then, the claim is clear.
\end{proof}

\section{Equivalences}\label{section;19.10.19.400}

We shall study equivalences of
twisted mini-holomorphic bundles,
twisted difference modules,
and twisted monopoles.
First, in Section~\ref{subsection;20.5.17.1},
we introduce analytically stability condition
for twisted mini-holomorphic bundles
in terms of adapted metrics.
We also prepare some formulas for the curvature
and the Higgs field
of a twisted mini-holomorphic bundle with a Hermitian metric
which are standard in the context of
mini-holomorphic bundles as in \cite{Mochizuki-difference-modules}.
In Section~\ref{subsection;19.8.10.50},
we shall explain the equivalence between
twisted mini-holomorphic bundles
and twisted difference modules,
which preserves the stability conditions.
In Section~\ref{subsection;20.5.17.2},
we shall explain the equivalence between
polystable twisted mini-holomorphic bundles
and twisted monopoles.

\subsection{Analytic stability condition for twisted mini-holomorphic bundles}\label{subsection;20.5.17.1}

\subsubsection{3-dimensional torus with mini-complex structure}\label{subsection;19.2.27.30}

We take an oriented base
$(a_i,\alpha_i)$ $(i=1,2,3)$
of the $\real$-vector space
$\real\times\cnum$.
Let $Y:=\real\times\cnum$
with the Riemannian metric $dt\,dt+dw\,d\wbar$.
It is equipped with the mini-complex structure
induced by the mini-complex coordinate system $(t,w)$.
We consider the action of
$\Gamma:=\seisuu\tte_1\oplus\seisuu\tte_2\oplus\seisuu\tte_3$
on $Y$ given by
\[
 \tte_i(t,w)=(t,w)+(a_i,\alpha_i), \qquad i=1,2,3.
\]
Let $\nbigm$ denote the quotient space
of $Y$ by the action of $\Gamma$.
It is equipped with a naturally induced mini-complex structure.

\subsubsection{Contraction of the curvature}

Let $Z$ be a finite subset of $\nbigm$.
Take $\varrho\in C^{\infty}(\nbigm,\Omega^{0,2}\nbigm)$.
Let $(E,\delbar_E)$ be a $\varrho$-twisted
mini-holomorphic bundle
on $\nbigm\setminus Z$.
Let $h$ be a Hermitian metric of~$E$.
As in~\cite{Mochizuki-difference-modules},
we set
\begin{gather*}
 G(h):=
 \bigl[
 \nabla_{h,w},
 \nabla_{h,\wbar}
 \bigr]
-\frac{\sqrt{-1}}{2}\nabla_{h,t}\phi_h.
\end{gather*}
If we emphasize the dependence on $\delbar_E$,
we use the notation $G(h,\delbar_E)$.
Note that
\begin{gather}
\label{eq;19.10.20.3}
 G(h)\,dw\,d\wbar=
 \bigl(F(\nabla_h)-\ast\nabla_h\phi_h\bigr)^{(1,1)}
\end{gather}
for the notation in Section~\ref{subsection;19.10.19.32}.

Let $U$ be an open subset of $\nbigm\setminus Z$
with $\nu=\nu_t\,dt+\nu_{\wbar}\,d\wbar
\in C^{\infty}\big(U,\Omega^{0,1}\big)$.
On $U$,
we set
$\delbar^{\nu}_E:=\delbar_E-\nu\id_E$.
Then,
$\big(E_{|U},\delbar^{\nu}_E\big)$
is a
$(\varrho_{|U}-\delbar\nu)$-twisted
mini-holomorphic bundle on $U$.
We obtain the Chern connection
$\nabla^{\nu}_h$
and the Higgs field $\phi^{\nu}_h$.

\begin{Lemma}
\label{lem;19.10.19.101}
The following holds:
\begin{gather*}
 \phi_h^{\nu}=\phi_h-\sqrt{-1}\Re(\nu_t)\id_E,
\qquad
 \nabla_h^{\nu}
=\nabla_h
-\sqrt{-1}\bigl(
 2\Image(\nu_{\wbar}d\wbar)
+\Image(\nu_t\,dt)
 \bigr)\id_E,
\\
 \nabla_h^{\nu}\big(\phi_h^{\nu}\big)
=\nabla_h(\phi_h)-\sqrt{-1}d\Re(\nu_t)\id_E,
\\
 F\big(\nabla^{\nu}_h\big)
=F(\nabla_h)-
\sqrt{-1}d\bigl(
 2\Image(\nu_{\wbar}dw)
+\Image(\nu_t)dt
\bigr)\id_E,
\\
 G\big(h,\delbar_E^{\nu}\big)
=G(h,\delbar_E)-
\bigl(
 2\Re(\del_w\nu_{\wbar})
+2^{-1}\Re(\del_t\nu_t)
\bigr)\,\id_E.
\end{gather*}
\end{Lemma}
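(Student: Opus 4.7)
The plan is to prove all five identities by a single local computation, building them up in the order listed. First I would derive the transformation rule for the $(1,0)$-operator: using the antilinearity of $h$ in the second slot together with the defining relation $\delbar_M h(u,v) = h(\delbar_E^{\nu} u, v) + h(u, \del_{E,h}^{\nu} v)$, the hypothesis $\delbar_E^{\nu} = \delbar_E - \nu\id_E$ forces $\del_{E,h}^{\nu} = \del_{E,h} + \bar{\nu}\,\id_E$, where $\bar\nu = \bar\nu_t\,dt + \bar\nu_{\wbar}\,dw$ is the pointwise complex conjugate (noting $\bar{d\wbar} = dw$ while $dt$ is real, consistent with $\overline{\Omega^{0,1}_Q} = \Omega^{1,0}_Q$ and $T^{\ast}_SM$ being real).

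Next, I would decompose according to the $S/Q$ splitting. The modifications read $\delbar_E^{\nu, S} - \delbar_E^S = -\nu_t\,dt\,\id_E$, $\delbar_E^{\nu, Q} - \delbar_E^Q = -\nu_{\wbar}\,d\wbar\,\id_E$, $\del_{E,h}^{\nu,S} - \del_{E,h}^S = \bar\nu_t\,dt\,\id_E$, $\del_{E,h}^{\nu,Q} - \del_{E,h}^Q = \bar\nu_{\wbar}\,dw\,\id_E$. Substituting these into the defining formulas $\phi_h = \frac{\sqrt{-1}}{2}(\delbar_E^S - \del_{E,h}^S)$ and $\nabla_h = \delbar_E^Q + \del_{E,h}^Q + \frac{1}{2}(\delbar_E^S + \del_{E,h}^S)$, and using $z + \bar z = 2\Re(z)$ together with $-z + \bar z = -2\sqrt{-1}\Image(z)$, delivers the first two stated identities directly.

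The formulas for $F(\nabla_h^{\nu})$ and $\nabla_h^{\nu}(\phi_h^{\nu})$ then follow with no further effort, because the modifications $\omega := \nabla_h^{\nu} - \nabla_h$ and $\xi := \phi_h^{\nu} - \phi_h = -\sqrt{-1}\Re(\nu_t)\id_E$ are scalar-valued multiples of $\id_E$. Indeed, $\omega\wedge\omega = 0$ since $\omega$ is a $1$-form, so $F(\nabla_h^{\nu}) = F(\nabla_h) + d\omega\,\id_E$; and since scalars commute with $\phi_h^{\nu}$, $\nabla_h^{\nu}(\phi_h^{\nu}) = \nabla_h(\phi_h) + d\xi$. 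Plugging in the first-step expressions for $\omega$ and $\xi$ yields the third and fourth identities.

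For the fifth identity, I would work on a mini-complex coordinate neighbourhood, reading off $\nabla_{h,w}^{\nu} = \nabla_{h,w} + \bar\nu_{\wbar}$, $\nabla_{h,\wbar}^{\nu} = \nabla_{h,\wbar} - \nu_{\wbar}$, $\nabla_{h,t}^{\nu} = \nabla_{h,t} - \sqrt{-1}\Image(\nu_t)$ from the previous steps. Expanding the commutator $[\nabla_{h,w}^{\nu}, \nabla_{h,\wbar}^{\nu}]$ by bilinearity produces the cross terms $-\del_w\nu_{\wbar} - \del_{\wbar}\bar\nu_{\wbar} = -2\Re(\del_w\nu_{\wbar})$, while expanding $\nabla_{h,t}^{\nu}(\phi_h^{\nu}) = [\nabla_{h,t}^{\nu}, \phi_h^{\nu}]$ contributes only the scalar correction $-\sqrt{-1}\del_t\Re(\nu_t) = -\sqrt{-1}\Re(\del_t\nu_t)$; combining these via the definition of $G(h)$ gives the last formula. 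No step presents a genuine obstacle: the calculation is entirely mechanical, and the only care required is in tracking the $S/Q$ decomposition and the action of complex conjugation on the components of $\Omega^{0,1}$.
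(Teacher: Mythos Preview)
Your proposal is correct and is exactly the direct computation the paper has in mind; the paper's own proof consists of the single sentence ``We can check the formulas by direct computations,'' so your write-up in fact supplies the details the paper omits.
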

We can check the formulas by direct computations.

Let $E'$ be any $\varrho$-twisted mini-holomorphic subbundle of $E$,
i.e.,
$\delbar_EC^{\infty}(\nbigm\setminus Z,E')
\subset
C^{\infty}\big(\nbigm\setminus Z,\Omega^{0,1}\nbigm\otimes E'\big)$.
We have the natural $\varrho$-twisted mini-holomorphic
structure $\delbar_{E'}$ on~$E'$.
Let $h_{E'}$ be the induced metric of~$E'$.
Let $p_{E'}$ be the orthogonal projection of~$E$
onto $E'$ with respect to~$h$.

\begin{Lemma}The following Chern--Weil formula holds:
\begin{gather}\label{eq;19.2.27.2}
 \Tr G(h_{E'})=
 \Tr\bigl(
 G(h_E)\cdot p_{E'}
 \bigr)
-\bigl|\del_{E,\wbar}p_{E'}\bigr|^2
-\frac{1}{4}\bigl|\del_{E,t}p_{E'}\bigr|^2.
\end{gather}
\end{Lemma}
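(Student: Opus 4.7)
The plan is to split $G(h)$ into a pair of commutators, one adapted to each of the $w$- and $t$-directions, and apply the standard Chern--Weil block-matrix argument to each. First, I would rewrite $G(h)$ purely in terms of commutators. Using $\nabla_{h,t}=\tfrac{1}{2}(\del_{E,t}+\del_{E,h,t})$ and $\phi_h=\tfrac{\sqrt{-1}}{2}(\del_{E,t}-\del_{E,h,t})$, a direct expansion of the bracket yields $\nabla_{h,t}\phi_h=[\nabla_{h,t},\phi_h]=-\tfrac{\sqrt{-1}}{2}[\del_{E,t},\del_{E,h,t}]$, so that
\[
 G(h)=[\del_{E,h,w},\del_{E,\wbar}]-\tfrac{1}{4}[\del_{E,t},\del_{E,h,t}].
\]
The claim is then equivalent to proving two separate trace identities, one for each commutator, and combining them with the prefactor $-\tfrac{1}{4}$.

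Next, I would work in the pointwise orthogonal decomposition $E=E'\oplus (E')^{\perp}$ with respect to $h$. The mini-holomorphic invariance $\delbar_E C^{\infty}(E')\subset\Omega^{0,1}\otimes E'$ makes both $\del_{E,\wbar}$ and $\del_{E,t}$ block upper-triangular; denote their off-diagonal blocks by $a$ and $c$. Applying the Chern-connection defining identities
\[
 \del_w h(u,v)=h(\del_{E,h,w}u,v)+h(u,\del_{E,\wbar}v),
\qquad
 \del_t h(u,v)=h(\del_{E,t}u,v)+h(u,\del_{E,h,t}v)
\]
to pairs $u\in\Gamma(E')$, $v\in\Gamma((E')^{\perp})$ (for which $h(u,v)\equiv 0$) forces $\del_{E,h,w}$ and $\del_{E,h,t}$ to be block lower-triangular, with off-diagonal blocks $-a^{\ast}$ and $-c^{\ast}$ respectively. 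A short computation of $\del_{E,\wbar}p_{E'}$ and $\del_{E,t}p_{E'}$ in block form shows they have only upper-right entries, equal to $-a$ and $-c$, whence $|\del_{E,\wbar}p_{E'}|^2=\Tr(aa^{\ast})$ and $|\del_{E,t}p_{E'}|^2=\Tr(cc^{\ast})$; moreover, the induced operators on $E'$ are the $(1,1)$-blocks of the ambient ones.

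Finally, multiplying the block matrices and reading off the $(1,1)$-entries gives
\[
\bigl([\del_{E,h,w},\del_{E,\wbar}]\bigr)_{11}=[\del_{E',h',w},\del_{E',\wbar}]+aa^{\ast},
\qquad
\bigl([\del_{E,t},\del_{E,h,t}]\bigr)_{11}=[\del_{E',t},\del_{E',h',t}]-cc^{\ast}.
\]
Taking traces on $E'$ and assembling with the prefactors from the first paragraph produces the desired identity. The one delicate point is the sign flip between these two equations: in the first commutator the lower-triangular operator $\del_{E,h,w}$ sits on the left, whereas in the second the upper-triangular operator $\del_{E,t}$ sits on the left, which reverses the sign of the off-diagonal contribution. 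Together with the prefactor $-\tfrac{1}{4}$, this is exactly what delivers the negative coefficient in front of $|\del_{E,t}p_{E'}|^2$ in the final formula. I expect this sign bookkeeping, together with the verification of block lower-triangularity of $\del_{E,h,t}$ from the $t$-direction Chern condition, to be the only substantive step; everything else is routine block-matrix algebra.
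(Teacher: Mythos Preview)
Your argument is correct and is actually more direct than the paper's. The paper does not recompute anything: for $\varrho=0$ it simply cites \cite[Section~2.8.2]{Mochizuki-difference-modules}, and for general $\varrho$ it works locally, picks $\nu$ with $\overline{\partial}\nu=\varrho$, replaces $\overline{\partial}_E$ by $\overline{\partial}_E^{\nu}=\overline{\partial}_E-\nu\,\mathrm{id}$ to land in the untwisted setting, and then observes that $\partial_{E,\overline{w}}p_{E'}$, $\partial_{E,t}p_{E'}$ are unchanged while $G(h,\overline{\partial}_E)$ and $G(h_{E'},\overline{\partial}_{E'})$ shift by the \emph{same} scalar multiple of the identity (Lemma~\ref{lem;19.10.19.101}), so the identity transfers back.

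Your route bypasses this reduction entirely: once you rewrite $G(h)=[\partial_{E,h,w},\partial_{E,\overline{w}}]-\tfrac{1}{4}[\partial_{E,t},\partial_{E,h,t}]$, the block-matrix computation uses only that $E'$ is preserved by $\partial_{E,\overline{w}}$ and $\partial_{E,t}$ together with the metric compatibility, neither of which sees $\varrho$. So your argument handles the twisted case uniformly without ever invoking the untwisted result. The trade-off is that the paper's reduction makes transparent \emph{why} the twist is irrelevant (it is a scalar that cancels from both sides), whereas your computation shows it implicitly by never needing integrability. Both are fine; yours is self-contained, the paper's is shorter given the citation.
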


\begin{proof} If $\varrho=0$,
it is proved in \cite[Section~2.8.2]{Mochizuki-difference-modules}.
Let us study the general case.
It is enough to prove the equality
locally around any point of $Q\in \nbigm\setminus Z$.
On a neighbourhood~$U$ of~$Q$,
there exists
$\nu\in C^{\infty}\big(U,\Omega^{0,1}\big)$
such that
$\delbar_E^{\nu}$
is a mini-holomorphic structure of~$E_{|U}$.
Note that
$\del^{\nu}_{E,\wbar}p_{E'}
=\del_{E,\wbar}p_{E'}$,
$\del^{\nu}_{E,t}p_{E'}
=\del_{E,t}p_{E'}$.
Moreover,
$\big(E',\delbar^{\nu}_{E'}\big)$ is a mini-holomorphic subbundle
of $\big(E,\delbar^{\nu}_E\big)$,
and
$G\big(h_{E'},\delbar^{\nu}_{E'}\big)=G\big(h_{E'},\delbar_{E'}\big)-
\bigl(
 2\Re(\del_w\nu_{\wbar})+2^{-1}\Re(\del_t\nu_t)
 \bigr)\id_{E'}$.
Then, we obtain the desired formula.
\end{proof}

\subsubsection[Analytic stability condition for mini-holomorphic bundles with a Hermitian metric]{Analytic stability condition for mini-holomorphic bundles\\ with a Hermitian metric}

Let $\big(E,\delbar_E\big)$ be a $\varrho$-twisted mini-holomorphic bundle on $\nbigm\setminus Z$
with a Hermitian metric~$h$.

\begin{Definition}\label{df;19.8.10.20} If $\Tr G(h)$ is expressed as a sum of an $L^1$-function and a non-positive function, then we set
$\deg\big(E,\delbar_E,h\big):=\int_{\nbigm\setminus Z}\Tr G(h)\dvol_{\nbigm}
\in\real\cup\{-\infty\}$.
We also set
$\mu\big(E,\delbar_E,h\big):=\deg\big(E,\delbar_E,h\big)/\rank(E)$.
\end{Definition}

Suppose that $|G(h)|_h$ is $L^1$.
By~(\ref{eq;19.2.27.2}),
$\deg(E',h_{E'})$
is defined in $\real\cup\{-\infty\}$
for any $\varrho$-twisted mini-holomorphic subbundle~$E'$
of~$E$.
\begin{Definition}\label{df;19.8.10.21}
Suppose that $|G(h)|_h$ is $L^1$.
Then,
$\big(E,\delbar_E,h\big)$ is called analytically stable
if
$\mu\big(E',\delbar_{E'},h_{E'}\big)
<\mu\big(E,\delbar_E,h\big)$
for any $\varrho$-twisted
mini-holomorphic subbundle
$E'\subset E$
with $0<\rank(E')<\rank(E)$.
\end{Definition}

\subsubsection[Adapted metrics of twisted mini-holomorphic bundles with Dirac type singularity]{Adapted metrics of twisted mini-holomorphic bundles\\ with Dirac type singularity}

Let $\big(E,\delbar_E\big)$ be a $\varrho$-twisted mini-holomorphic bundle with Dirac type singularity on $(\nbigm;Z)$.

\begin{Lemma}\label{lem;19.10.19.200}
If $h$ is an adapted metric at $P$, then $G(h)_Q=O\big(d(P,Q)^{-1}\big)$ around $P$, where $d(P,Q)$ denotes the distance of $P$ and $Q$. In particular, if $h$ is an adapted metric of $\big(E,\delbar_E\big)$,
then $|G(h)|_h$ is $L^1$.
\end{Lemma}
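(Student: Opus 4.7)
The plan is to localize near each point of $Z$ and reduce the twisted case to the untwisted case, where the desired asymptotic follows from the Kronheimer resolution framework already used by the author.

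First, I would work in a mini-complex coordinate neighbourhood $(U;t,w)$ around $P$, shrunk so that $P$ is the unique point of $Z$ in $U$ and so that there exists $\nu\in C^{\infty}\bigl(U,\Omega^{0,1}\bigr)$ with $\delbar\nu=\varrho$. Setting $\delbar_E^{\nu}:=\delbar_E-\nu\id_E$ produces a genuine (untwisted) mini-holomorphic bundle on $U$, and the formula in Lemma~\ref{lem;19.10.19.101} gives
\[
G(h,\delbar_E)=G\bigl(h,\delbar_E^{\nu}\bigr)+\bigl(2\Re(\del_w\nu_{\wbar})+2^{-1}\Re(\del_t\nu_t)\bigr)\id_E,
\]
with a $C^{\infty}$ correction term that is bounded on any relatively compact sub-neighbourhood of $P$. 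Since Definition~\ref{df;19.10.19.100} is formulated precisely so that $h$ remains an adapted metric of the untwisted bundle $\bigl(E_{|U},\delbar_E^{\nu}\bigr)$, proving the estimate reduces to the case $\varrho=0$.

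In the untwisted case, the bundle $\bigl(E_{|U},\delbar_E^{\nu}\bigr)$ lifts to the $S^1$-equivariant holomorphic bundle $\bigl(\Etilde_P,\delbar^{\nu}_{\Etilde_P}\bigr)$ on the Kronheimer cover $\Utilde\to U$, and, by hypothesis, the pulled-back metric $\htilde_P$ extends smoothly across $(0,0)$. Consequently the Chern curvature $F(\nabla_{\htilde_P})$ is a bounded $C^{\infty}$ $(1,1)$-form on $\Utilde$ in the coordinates $(z_1,z_2)$. I would now transfer this bound to $U\setminus\{P\}$ via the map $\varphi(z_1,z_2)=\bigl(|z_1|^2-|z_2|^2,2z_1z_2\bigr)$. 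Using $d(P,\varphi(z))=|z_1|^2+|z_2|^2$ and the fact that $dt$, $dw$, $d\wbar$ each pull back to $1$-forms of size comparable to $|z|$, a direct comparison of frames shows that the contraction $G(h)$, which in view of (\ref{eq;19.10.20.3}) is the coefficient of the $(1,1)$-component of $F(\nabla_h)-\ast\nabla_h\phi_h$ in the frame $dw\wedge d\wbar$, inherits the bound $|G(h)|_h(Q)=O\bigl(|z|^{-2}\bigr)=O\bigl(d(P,Q)^{-1}\bigr)$. The analogous bound for $\nabla_{h,t}\phi_h$ follows from the same computation on $\Utilde$.

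Finally, the $L^1$ statement is obtained by a volume estimate: since $\nbigm$ has real dimension three, for a small ball $B_{\epsilon}(P)$ we have $\int_{B_{\epsilon}(P)} d(P,Q)^{-1}\dvol_{\nbigm}\sim \int_0^{\epsilon} r\,dr<\infty$, so the local estimate at each of the finitely many points $P\in Z$ combined with the smoothness of $G(h)$ away from $Z$ gives $|G(h)|_h\in L^1(\nbigm)$.

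The main technical obstacle is the precise comparison of $G(h)$ with the curvature on the Kronheimer resolution in the second paragraph. The operators $\del_{E,t}$, $\del_{E,w}$, $\del_{E,\wbar}$ and their commutators do not pull back to individual operators in the $(z_1,z_2)$-coordinates, but to $S^1$-equivariant combinations, and one has to verify that the specific linear combination defining $G(h)$ corresponds, up to bounded terms, to a component of $F(\nabla_{\htilde_P})$ in the $\tilde{}\,$-frame. This is a direct but careful calculation relying on the explicit form of $\varphi$ and the fact that $g_{\nbigm}=dt\,dt+dw\,d\wbar$; once carried out, the asymptotic and hence the $L^1$ conclusion follow as above.
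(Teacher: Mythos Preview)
Your proposal is correct and follows the same route as the paper: reduce to $\varrho=0$ locally via Lemma~\ref{lem;19.10.19.101}, then use the Kronheimer resolution to get the $O(d(P,Q)^{-1})$ bound in the untwisted case. The paper's own proof is simply a citation to \cite[Lemma~2.35]{Mochizuki-difference-modules} for the untwisted step plus the same reduction via Lemma~\ref{lem;19.10.19.101}; your sketch of the Kronheimer computation (bounded curvature upstairs, $\varphi^{\ast}dw$, $\varphi^{\ast}d\wbar$, $\varphi^{\ast}dt$ of size $|z|$, hence coefficient $\sim|z|^{-2}=d(P,Q)^{-1}$) is exactly how that cited lemma is proved, so you have effectively reproduced the argument rather than just invoking it.
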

\begin{proof}
In the case $\varrho=0$,
it is proved in \cite[Lemma~2.35]{Mochizuki-difference-modules}.
The general case follows from
Lemma~\ref{lem;19.10.19.101}.
\end{proof}

\begin{Lemma}\label{lem;19.2.27.1}
Let $\big(E,\delbar_E\big)$ be a $\varrho$-twisted mini-holomorphic bundle
with Dirac type singularity on $(\nbigm;Z)$.
Let $E'\neq 0$ be a $\varrho$-twisted
mini-holomorphic subbundle of~$E$.
Let $h$ and $h'$ be adapted Hermitian metrics of
$E$ and $E'$, respectively.
Let $h_{E'}$ be the metric of $E'$
induced by $h$.
Then,
$\deg(E',h_{E'})=\deg(E',h')$.
\end{Lemma}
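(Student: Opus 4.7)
The plan is to reduce to the line-bundle case by taking determinants, derive an explicit change-of-metric formula on a line bundle, and conclude by integration by parts with a cutoff around $Z$.

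First, I would take determinants. Setting $L:=\det E'$, a direct computation shows $\Tr G(h_{E'})=G(\det h_{E'})$: the trace of the curvature is the curvature of the determinant, and for the Higgs part, viewing $\phi_h$ as a section of $\End(E')$, one has $\Tr(\nabla_{h_{E'},t}\phi_{h_{E'}})=\del_t\Tr(\phi_{h_{E'}})=[\nabla_{\det h_{E'},t},\phi_{\det h_{E'}}]$, which is exactly the Higgs contribution to $G$ for the induced metric on~$L$. The same holds for $h'$, so the statement reduces to proving
\[
\int_{\nbigm\setminus Z}\bigl[G(\det h_{E'})-G(\det h')\bigr]\,\dvol_{\nbigm}=0
\]
on the $(\rank(E')\cdot\varrho)$-twisted mini-holomorphic line bundle $L$.

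Second, I would show that both $\det h_{E'}$ and $\det h'$ are adapted on $L$. This is immediate for $\det h'$ since $h'$ is adapted on $E'$ by hypothesis. For $h_{E'}$, it suffices to show that $h_{E'}$ is an adapted metric on $(E',\delbar_{E'})$. At each $P\in Z$, the $\varrho$-twisted mini-holomorphic subbundle $E'\subset E$ pulls back on $\Utilde\setminus\{(0,0)\}$ to an $S^{1}$-equivariant holomorphic subbundle of $\Etilde_{P|\Utilde\setminus\{(0,0)\}}$; its saturation on the smooth complex surface $\Utilde$ is a rank-$\rank(E')$ reflexive, hence locally free, $S^{1}$-equivariant holomorphic subsheaf of $\Etilde_{P}$. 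By uniqueness of such $S^{1}$-equivariant extensions across $\{(0,0)\}$, this coincides with the Kronheimer resolution of $(E',\delbar_{E'})$ at~$P$. Thus the pull-back of $h_{E'}$ is the restriction of a smooth metric to a smooth subbundle, so $h_{E'}$ is adapted.

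Third, I would derive the change-of-metric formula. Write $\det h_{E'}=e^{-u}\det h'$ with $u\in C^{\infty}(\nbigm\setminus Z,\real)$. Working in a mini-complex chart $(t,w)$ with $g_{\nbigm}=dt\,dt+dw\,d\wbar$ and choosing locally $\nu$ with $\delbar\nu=\rank(E')\cdot\varrho$, Lemma~\ref{lem;19.10.19.101} reduces the computation to the untwisted case because the $\nu$-correction to $G$ is independent of the metric and cancels in the difference. A direct computation in a local trivialization of $L$ (using $\nabla_{h,t}=\frac{1}{2}(\del_{L,t}+\del_{L,h,t})$ and $\phi_{h}=\frac{\sqrt{-1}}{2}(\del_{L,t}-\del_{L,h,t})$) then yields
\[
G(\det h_{E'})-G(\det h')=\left(\del_{w}\del_{\wbar}-\tfrac{1}{4}\del_{t}^{2}\right)u,
\]
a constant-coefficient, formally self-adjoint, second-order linear differential operator applied to $u$.

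Fourth, I would conclude via a cutoff argument. By Step~2, $u$ is the logarithm of the ratio of two smooth positive metrics on the line bundle $\det(\text{Kronheimer resolution of }E')$ over each $\Utilde$, so $u$ is bounded on $\nbigm\setminus Z$. Let $\chi_{\epsilon}\in C^{\infty}(\nbigm)$ equal~$0$ on the $\epsilon$-neighbourhood of $Z$ and~$1$ outside the $2\epsilon$-neighbourhood, with $\|\del^{2}\chi_{\epsilon}\|_{\infty}=O(\epsilon^{-2})$ and support of $\del^{2}\chi_{\epsilon}$ of volume $O(\epsilon^{3})$. By Lemma~\ref{lem;19.10.19.200}, $G(\det h_{E'})$ and $G(\det h')$ are $L^{1}$, so by dominated convergence
\[
\int_{\nbigm\setminus Z}[G(\det h_{E'})-G(\det h')]\,\dvol_{\nbigm}=\lim_{\epsilon\to 0}\int_{\nbigm}\chi_{\epsilon}\left(\del_{w}\del_{\wbar}-\tfrac{1}{4}\del_{t}^{2}\right)u\,\dvol_{\nbigm}.
\]
Integrating by parts twice on the closed manifold $\nbigm$ transfers both derivatives onto $\chi_{\epsilon}$ without boundary terms, and the result is bounded by $\|u\|_{\infty}\cdot O(\epsilon^{-2})\cdot O(\epsilon^{3})=O(\epsilon)\to 0$, completing the proof.

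The main obstacle is Step~2: establishing that $E'$ extends across each $P\in Z$ to an $S^{1}$-equivariant holomorphic subbundle of the Kronheimer resolution $\Etilde_{P}$. This rests on reflexivity of saturated coherent subsheaves on the smooth complex surface $\Utilde$, together with the uniqueness of $S^{1}$-equivariant holomorphic extensions across the codimension-two point~$\{(0,0)\}$; without this, one cannot conclude that $h_{E'}$ is itself adapted and hence that $u$ is bounded near~$Z$.
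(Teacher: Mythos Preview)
Your route—reduce to a line bundle via determinants, express $G(\det h_{E'})-G(\det h')$ as a constant-coefficient Laplacian of $u$, and kill the integral with a cutoff—is more self-contained than the paper's proof, which after reducing to rank one localizes near each $P\in Z$, passes to the untwisted case via Lemma~\ref{lem;19.10.19.101}, and then invokes the local computation from \cite[proof of Proposition~9.4]{Mochizuki-difference-modules}. So the strategies are genuinely different.

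There is, however, a real error in Step~2. The saturation of $\widetilde{E'}$ inside $\Etilde_P$ is locally free on the smooth surface $\Utilde$, but a \emph{locally free subsheaf} need not be a \emph{subbundle}: the inclusion can drop rank at $(0,0)$. Concretely, take $\Etilde_P=\nbigo_{\Utilde}^{\oplus 2}$ with $S^1$-weights $(1,-1)$ and let $\widetilde{E'}$ be the $S^1$-equivariant line spanned by $(z_1,z_2)$; this is saturated and isomorphic to $\nbigo_{\Utilde}$, yet the inclusion vanishes at the origin, so the induced metric degenerates and $h_{E'}$ is \emph{not} adapted. Consequently $u$ is unbounded in general, and you cannot quote Lemma~\ref{lem;19.10.19.200} for $\det h_{E'}$ in Step~4.

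The fix is that boundedness of $u$ is stronger than needed. By Hartogs the inclusion $\det\widetilde{E'}_P\to\bigwedge^{\rank E'}\Etilde_P$ extends holomorphically across $(0,0)$, so in local frames $\det h_{E'}/\det h'$ is a smooth positive function times $\sum_I|s_I|^2$ for finitely many holomorphic $s_I$ with no common zero on $\Utilde\setminus\{(0,0)\}$; hence $|u(Q)|=O\bigl(\log d(P,Q)^{-1}\bigr)$. With this estimate your cutoff argument still gives $\int u\,\Delta\chi_\epsilon=O(\epsilon|\log\epsilon|)\to 0$. For the left-hand limit, replace dominated convergence by the Chern--Weil formula~\eqref{eq;19.2.27.2}: it writes $\Tr G(h_{E'})$ as an $L^1$ term minus a nonnegative one, so DCT handles the first and monotone convergence the second. (A minor point: the operator in Step~3 should read $-\bigl(\del_w\del_{\wbar}+\tfrac14\del_t^2\bigr)u$, consistently with $G(h_0e^f)-G(h_0)=\tfrac14\Delta f$ in the proof of Lemma~\ref{lem;19.2.28.20}.)
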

\begin{proof}
It is enough to study the case $\rank E'=1$.
We may assume that there exist neighbourhoods
$\nbigu_P$ of $P\in Z$
such that
$h_{E'}=h'$ on $\nbigm\setminus\bigcup_{P\in Z}\nbigu_P$.
Then, we have only to prove that
$\int_{\nbigu_P}G(h')
=\int_{\nbigu_P}G(h_{E'})$
for any $P\in Z$.
By Lemma~\ref{lem;19.10.19.101},
it is enough to study the case
$\varrho=0$.
It is proved in the proof of
\cite[Proposition~9.4]{Mochizuki-difference-modules}
(See the argument to compare
 $\int G(h_{0,E_1})$ and $\int G(h_{2,E_1})$
in the proof of \cite[Proposition 9.4]{Mochizuki-difference-modules}.)
\end{proof}

\begin{Corollary}
If $h_1$ and $h_2$ are adapted metrics of
$\big(E,\delbar_E\big)$,
$\deg\big(E,\delbar_E,h_1\big)
=\deg\big(E,\delbar_E,h_2\big)$ holds.
\end{Corollary}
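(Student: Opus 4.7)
The plan is to observe that this Corollary is nothing more than the special case $E' = E$ of Lemma~\ref{lem;19.2.27.1}. I would simply substitute $E' := E$ (any bundle is tautologically a $\varrho$-twisted mini-holomorphic subbundle of itself), $h := h_1$, and $h' := h_2$ in the hypotheses of that lemma. Both $h_1$ and $h_2$ are adapted metrics of $(E,\delbar_E)$ by assumption, so the adaptedness requirements for $E$ and for $E'$ are both met.

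Under this substitution the metric $h_{E'}$ induced on $E'$ by $h = h_1$ coincides with $h_1$ itself. The conclusion $\deg(E',h_{E'}) = \deg(E',h')$ of the lemma then reads $\deg(E,\delbar_E,h_1) = \deg(E,\delbar_E,h_2)$, which is precisely the assertion of the Corollary. Hence no new argument is needed beyond invoking Lemma~\ref{lem;19.2.27.1}.

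The real content is of course already packed into that lemma, whose proof reduces the $\varrho$-twisted situation to the untwisted one via Lemma~\ref{lem;19.10.19.101} (the local gauge change $\delbar_E \mapsto \delbar_E^{\nu}$ adjusts $G(h)$ by an explicit locally integrable correction) and then further reduces the resulting equality of degrees to the rank-one statement handled in the proof of \cite[Proposition~9.4]{Mochizuki-difference-modules}. If one wished to give a self-contained argument for just this Corollary without routing through Lemma~\ref{lem;19.2.27.1}, the natural route would be to write $h_2 = h_1 \cdot s$ for a positive self-adjoint endomorphism $s$ with respect to $h_1$, derive a secondary Bott--Chern-type formula expressing $\Tr G(h_2) - \Tr G(h_1)$ as a sum of $\delbar_M$- and $\del_M$-exact terms, and integrate over $\nbigm$ minus small balls around each $P \in Z$. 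The adaptedness of both $h_1$ and $h_2$ at each Dirac singularity would force the boundary contributions to vanish in the limit; this boundary analysis near $Z$ is the only genuine obstacle, the bulk cancellation being purely formal.
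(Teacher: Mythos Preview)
Your proposal is correct and is exactly what the paper intends: the Corollary is stated immediately after Lemma~\ref{lem;19.2.27.1} with no separate proof, so the intended argument is precisely the specialization $E'=E$, $h=h_1$, $h'=h_2$ that you give. Your additional remarks on the content of the lemma and the alternative Bott--Chern route are accurate but not needed for the Corollary itself.
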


\begin{Lemma}\label{lem;19.10.18.30}
Take a small neighbourhood $\nbigu_P$ of $P\in Z$.
The following estimates hold
for $Q\in \nbigu_P\setminus\{P\}$:
\begin{gather*}
|\phi_{h,Q}|_h=O\bigl(d(P,Q)^{-1}\bigr),
\qquad
 |(\nabla\phi_h)_{Q}|_{h,g_{\nbigm}}=O\bigl(d(P,Q)^{-2}\bigr),
\\
 |F(\nabla_h)_Q|_{h,g_{\nbigm}}=O\bigl(d(P,Q)^{-2}\bigr).
\end{gather*}
In particular,
$\bigl|\nabla_h\phi_h|_h$
and $\bigl|F(\nabla_h)\bigr|_h$ are $L^1$.
\end{Lemma}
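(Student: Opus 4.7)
The plan is to follow exactly the same pattern used to prove Lemma~\ref{lem;19.10.19.200}: reduce to the untwisted case ($\varrho=0$) by a local gauge choice, then quote the corresponding estimates for (untwisted) mini-holomorphic bundles with Dirac type singularity, which are already available in \cite{Mochizuki-difference-modules} and \cite{Mochizuki-Yoshino}.

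Concretely, after shrinking $\nbigu_P$ if necessary so that it is contained in a mini-complex coordinate neighbourhood on which $\varrho$ is exact, choose $\nu=\nu_t\,dt+\nu_{\wbar}\,d\wbar\in C^{\infty}(\nbigu_P,\Omega^{0,1})$ with $\delbar\nu=\varrho$. Set $\delbar_E^{\nu}:=\delbar_E-\nu\id_E$; then $(E_{|\nbigu_P\setminus\{P\}},\delbar_E^{\nu})$ is an ordinary mini-holomorphic bundle, and by the preceding lemma identifying Kronheimer resolutions, $P$ remains a Dirac type singularity and $h$ remains an adapted metric. Lemma~\ref{lem;19.10.19.101} gives
\[
\phi_h=\phi_h^{\nu}-\sqrt{-1}\Re(\nu_t)\id_E,
\qquad
\nabla_h=\nabla_h^{\nu}+\sqrt{-1}\bigl(2\Image(\nu_{\wbar}d\wbar)+\Image(\nu_t\,dt)\bigr)\id_E,
\]
together with the corresponding formulas for $\nabla_h\phi_h$ and $F(\nabla_h)$. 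Because $\nu$ is smooth on $\nbigu_P$, all correction terms (and their first derivatives) are uniformly bounded on a smaller neighbourhood of $P$, so the estimates for $|\phi_h|_h$, $|\nabla_h\phi_h|_{h,g_{\nbigm}}$, $|F(\nabla_h)|_{h,g_{\nbigm}}$ are equivalent to the corresponding estimates for the untwisted objects $\phi_h^{\nu}$, $\nabla_h^{\nu}\phi_h^{\nu}$, $F(\nabla_h^{\nu})$.

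For the untwisted case, the pullback of $(E,\delbar_E^{\nu},h)_{|\nbigu_P\setminus\{P\}}$ under the Kronheimer map $\varphi(z_1,z_2)=(|z_1|^2-|z_2|^2,2z_1z_2)$ extends to an $S^1$-equivariant $C^{\infty}$ Hermitian holomorphic bundle $(\Etilde_P,\htilde)$ on $\Utilde$. Since $d(P,\varphi(z))\asymp |z_1|^2+|z_2|^2$, any smooth $S^1$-invariant object on $\Utilde$ which is the pullback of a tensor on $\nbigm\setminus\{P\}$ satisfies bounds on the latter that come from dividing by powers of $|z|$ produced by the Jacobian of $\varphi$. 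The combined connection-plus-Higgs-field $\nabla_h^{\nu}+\sqrt{-1}\phi_h^{\nu}\eta$ lifts to the smooth Chern connection of $(\Etilde_P,\htilde)$; extracting the vertical component yields $|\phi_h^{\nu}|_h=O(d(P,\cdot)^{-1})$, and one further differentiation gives $|\nabla_h^{\nu}\phi_h^{\nu}|=O(d(P,\cdot)^{-2})$ and $|F(\nabla_h^{\nu})|=O(d(P,\cdot)^{-2})$. These are exactly the estimates established in \cite[Section~2]{Mochizuki-difference-modules} and in \cite{Mochizuki-Yoshino}; I would cite those rather than redoing the analysis.

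The final $L^1$ statement is then formal: in geodesic polar coordinates centered at $P\in Z$, the volume form on $\nbigm$ is comparable to $r^2\,dr\,d\Omega$, and $\int_0^{\epsilon}r^{-2}\cdot r^2\,dr<\infty$, so the $O(d(P,Q)^{-2})$ pointwise bounds on $|\nabla_h\phi_h|_h$ and $|F(\nabla_h)|_h$ are integrable near each $P\in Z$; summing over the finite set $Z$ gives global $L^1$ integrability. The main technical ingredient is the Kronheimer-resolution description of adapted metrics in the untwisted case; everything else in the argument—the reduction via $\nu$ and the volume-form computation—is bookkeeping.
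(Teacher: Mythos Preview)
Your proof is correct and follows essentially the same approach as the paper: reduce to the untwisted case via Lemma~\ref{lem;19.10.19.101}, then invoke \cite{Mochizuki-Yoshino} for the estimates on $\phi_h$ and $\nabla_h\phi_h$. The only minor difference is that for the curvature bound in the untwisted case the paper does not read it off the Kronheimer resolution directly but instead combines Lemma~\ref{lem;19.10.20.2}, \eqref{eq;19.10.20.3}, and the already-established estimate $G(h)=O(d^{-1})$ from Lemma~\ref{lem;19.10.19.200} to write $F(\nabla_h)=\ast\nabla_h\phi_h+O(d^{-1})$ and then use the $O(d^{-2})$ bound on $\nabla_h\phi_h$.
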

\begin{proof}
Suppose that $\varrho=0$.
The estimates
$|\phi_{h,Q}|_h=O\bigl(d(P,Q)^{-1}\bigr)$
and
$|(\nabla\phi_h)_{Q}|_{h,g_{\nbigm}}=O\bigl(d(P,Q)^{-2}\bigr)$
directly follow from
\cite[Proposition~1]{Mochizuki-Yoshino}.
Because of Lemma~\ref{lem;19.10.20.2},
Lemma~\ref{lem;19.10.19.200}
and~(\ref{eq;19.10.20.3}),
we obtain
$|F(\nabla_h)_Q|_{h,g_{\nbigm}}=O\bigl(d(P,Q)^{-2}\bigr)$.
We can reduce the case $\varrho$
to the case $\varrho=0$
by using Lemma~\ref{lem;19.10.19.101}.
\end{proof}

\subsubsection[Analytic stability condition for $\varrho$-twisted mini-holomorphic bundles with Dirac type singularity]{Analytic stability condition for $\boldsymbol{\varrho}$-twisted mini-holomorphic bundles\\ with Dirac type singularity}

Let $\big(E,\delbar_E\big)$ be a $\varrho$-twisted
mini-holomorphic bundle
with Dirac type singularity on $(\nbigm;Z)$.
We set
\[
 \deg^{\an}\big(E,\delbar_E\big):=
 \deg\big(E,\delbar_E,h\big),
\qquad
 \mu^{\an}\big(E,\delbar_E\big):=
 \deg^{\an}\big(E,\delbar_E\big)/\rank(E)
\]
for an adapted Hermitian metric $h$ of $E$,
which is independent of the choice of $h$.
The numbers are called
the analytic degree and the analytic slope of
$\big(E,\delbar_E,h\big)$, respectively.

\begin{Definition}\label{df;19.10.18.10}
We say that $\big(E,\delbar_E\big)$ is analytically stable
if $\mu^{\an}\big(E',\delbar_{E'}\big)<\mu^{\an}\big(E,\delbar_E\big)$
holds for any $\varrho$-twisted mini-holomorphic subbundle
$E'\subset E$ with
$0<\rank(E')<\rank(E)$.
It is called polystable if
$\big(E,\delbar_E\big)=\bigoplus\big(E_i,\delbar_{E_i}\big)$,
where
each $\big(E_i,\delbar_{E_i}\big)$ is stable
such that
$\mu^{\an}\big(E_i,\delbar_{E_i}\big)
=\mu^{\an}\big(E,\delbar_{E}\big)$.
\end{Definition}

We obtain the following lemma from Lemma~\ref{lem;19.2.27.1}.
\begin{Lemma}\label{lem;19.8.10.40}
A $\varrho$-twisted
mini-holomorphic bundle with Dirac type singularity
$\big(E,\delbar_E\big)$ on $(\nbigm;Z)$
is analytically stable if and only if
$\big(E,\delbar_E,h\big)$ is analytically stable
for an adapted Hermitian metric~$h$ of~$E$.
\end{Lemma}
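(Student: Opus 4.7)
The plan is to deduce the lemma by directly comparing the two definitions, Definition~\ref{df;19.8.10.21} (analytic stability of the triple $(E,\delbar_E,h)$ using the metric $h_{E'}$ induced on each $\varrho$-twisted mini-holomorphic subbundle $E'$) with Definition~\ref{df;19.10.18.10} (analytic stability of the pair $(E,\delbar_E)$ using the invariant $\mu^{\an}$ which is computed from an adapted metric on $E'$ itself). The bridge between the two is Lemma~\ref{lem;19.2.27.1}, which guarantees that for any $\varrho$-twisted mini-holomorphic subbundle $E'$ of $E$, any two adapted metrics on $E'$ produce the same degree, and in particular $\deg(E',h_{E'})$ agrees with $\deg(E',h')$ whenever $h'$ is any adapted metric of $E'$.

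First, I would fix an adapted Hermitian metric $h$ on $(E,\delbar_E)$. By Lemma~\ref{lem;19.10.19.200}, $|G(h)|_h$ is $L^1$, so Definition~\ref{df;19.8.10.21} applies to $(E,\delbar_E,h)$; moreover, by definition, $\mu\big(E,\delbar_E,h\big)=\mu^{\an}\big(E,\delbar_E\big)$. Next, I would consider an arbitrary proper $\varrho$-twisted mini-holomorphic subbundle $E'\subset E$. Since $E$ has Dirac type singularity along $Z$, the subbundle $E'$ also has Dirac type singularity along $Z$ (meromorphy is inherited by subsheaves), so it admits an adapted metric $h'$; hence $\mu^{\an}\big(E',\delbar_{E'}\big)=\mu\big(E',\delbar_{E'},h'\big)$ is well-defined. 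Lemma~\ref{lem;19.2.27.1} then gives
\[
\deg\big(E',h_{E'}\big)=\deg\big(E',h'\big),
\]
so after dividing by $\rank(E')$ we obtain $\mu\big(E',\delbar_{E'},h_{E'}\big)=\mu^{\an}\big(E',\delbar_{E'}\big)$.

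Combining these identities, for every proper $\varrho$-twisted mini-holomorphic subbundle $E'$, the inequality $\mu\big(E',\delbar_{E'},h_{E'}\big)<\mu\big(E,\delbar_E,h\big)$ from Definition~\ref{df;19.8.10.21} is equivalent to $\mu^{\an}\big(E',\delbar_{E'}\big)<\mu^{\an}\big(E,\delbar_E\big)$ from Definition~\ref{df;19.10.18.10}. Taking these equivalences over all proper subbundles yields the claimed equivalence of the two stability notions.

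The only delicate point is the existence of an adapted metric on the subbundle $E'$, needed to invoke Lemma~\ref{lem;19.2.27.1}. This is standard: $E'$ inherits Dirac type singularity from $E$, so its Kronheimer resolutions at the points of $Z$ exist, and an adapted metric can be produced by a partition of unity argument gluing smooth metrics on those resolutions. Once this is in hand, the proof is a direct chain of equalities, and there is no analytic obstacle beyond what has already been packaged into Lemmas~\ref{lem;19.10.19.200} and~\ref{lem;19.2.27.1}.
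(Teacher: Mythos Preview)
Your proposal is correct and follows essentially the same approach as the paper: the paper simply states that the lemma follows from Lemma~\ref{lem;19.2.27.1}, and your argument is precisely the unpacking of this, identifying $\mu(E',\delbar_{E'},h_{E'})$ with $\mu^{\an}(E',\delbar_{E'})$ via that lemma and then matching Definitions~\ref{df;19.8.10.21} and~\ref{df;19.10.18.10} term by term.
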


\subsubsection[Complement on the choice of $\varrho$]{Complement on the choice of $\boldsymbol{\varrho}$}

Let $H^i(\nbigm,\nbigo_{\nbigm})$ denote the $i$-th cohomology group
of the complex
$\bigl( C^{\infty}\big(\nbigm,\Omega^{0,i}\nbigm\big),\delbar_{\nbigm}
\bigr)$.
For any $\nu\in C^{\infty}\big(\nbigm,\Omega^{0,1}\nbigm\big)$,
$\varrho$-twisted mini-holomorphic bundles
are equivalent to
$(\varrho-\delbar\nu)$-twisted mini-holomorphic bundles.
Hence, the essential ambiguity of the choice of $\varrho$
lives in $H^2(\nbigm,\nbigo_{\nbigm})$.

\begin{Lemma}
We have the following isomorphisms:
\[
 H^0(\nbigm,\nbigo_{\nbigm})\simeq \cnum,
\qquad
 H^1(\nbigm,\nbigo_{\nbigm})\simeq
 \cnum\,dt\oplus \cnum\,d\wbar,
\qquad
 H^2(\nbigm,\nbigo_{\nbigm})\simeq
 \cnum\,dt\wedge d\wbar.
\]
Hence, for the study of twisted mini-holomorphic bundles,
it is essential to study the case
$\varrho=\alpha\,dt\,d\wbar$ for some $\alpha\in\cnum$.
\end{Lemma}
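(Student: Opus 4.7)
The plan is to compute the cohomology of the complex $\bigl(C^\infty(\nbigm,\Omega^{0,\bullet}\nbigm),\delbar_{\nbigm}\bigr)$ explicitly using Fourier analysis on the $3$-torus $\nbigm=Y/\Gamma$. First I would observe that, because $\Gamma$ acts on $Y=\real_t\times\cnum_w$ by translations, the translation-invariant forms $dt$, $d\wbar$, and $dt\wedge d\wbar$ descend to global frames of $\Omega^{0,1}\nbigm$ and $\Omega^{0,2}\nbigm$. Using these frames, the direct computation $d(\alpha\,dt+\beta\,d\wbar)=(\del_t\beta-\del_{\wbar}\alpha)\,dt\wedge d\wbar + (\text{terms in }dw)$ identifies the complex with
\[
C^\infty(\nbigm)\xrightarrow{(\del_t,\del_{\wbar})}C^\infty(\nbigm)^{\oplus 2}\xrightarrow{(-\del_{\wbar},\del_t)}C^\infty(\nbigm),
\]
where $\delbar^2=0$ follows from the commutation $[\del_t,\del_{\wbar}]=0$.

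Next I would diagonalize via Fourier series. Every smooth function on $\nbigm$ admits an expansion $f=\sum_{\gamma^*\in\Gamma^*}c_{\gamma^*}\chi_{\gamma^*}$ with $\chi_{\gamma^*}(x):=\exp(2\pi\sqrt{-1}\langle\gamma^*,x\rangle)$ and rapidly decreasing coefficients, where $\Gamma^*\subset Y^{\lor}$ is the dual lattice. Each $\chi_{\gamma^*}$ is an eigenvector of $\del_t$ and $\del_{\wbar}$ with eigenvalues $a(\gamma^*):=2\pi\sqrt{-1}\langle\gamma^*,\del_t\rangle$ and $b(\gamma^*):=2\pi\sqrt{-1}\langle\gamma^*,\del_{\wbar}\rangle$. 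Since $\del_t$ and $\del_{\wbar}$ commute with the Fourier decomposition, the cohomology splits as a direct sum over $\gamma^*\in\Gamma^*$ of the cohomology of the finite-dimensional complex
\[
\cnum\xrightarrow{(a,b)}\cnum^{\oplus 2}\xrightarrow{(-b,a)}\cnum.
\]

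It remains to analyze this mode by mode. For $\gamma^*=0$ one has $a=b=0$, so the cohomologies are $\cnum$, $\cnum^2$, $\cnum$, represented by the constant function $1$, the constant forms $dt, d\wbar$, and the constant form $dt\wedge d\wbar$ respectively. For $\gamma^*\neq 0$, the identities $\del_t=\del_{x_1}$ and $2\del_{\wbar}=\del_{x_2}+\sqrt{-1}\del_{x_3}$ imply that $a(\gamma^*)$ and $b(\gamma^*)$ cannot both vanish (otherwise $\gamma^*$ would annihilate every $\del_{x_i}$); a two-line linear algebra check then shows the complex $\cnum\to\cnum^2\to\cnum$ is exact. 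Summing over $\gamma^*$ yields the claimed isomorphisms $H^0\simeq\cnum$, $H^1\simeq\cnum\,dt\oplus\cnum\,d\wbar$, and $H^2\simeq\cnum\,dt\wedge d\wbar$.

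There is no real obstacle: the only routine point is the verification that Fourier series of smooth functions on a compact torus converge in $C^\infty$ and that the computation of the Fourier coefficients commutes with the cohomology of the diagonal operator, both of which are standard. The argument is essentially the computation of the cohomology of a Koszul complex associated to the commuting pair $(\del_t,\del_{\wbar})$ on the torus.
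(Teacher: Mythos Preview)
Your argument is correct, but it takes a different route from the paper's. The paper constructs an auxiliary complex $2$-torus $X$ by setting $z=s+\sqrt{-1}t$ on $\real_s\times(\real_t\times\cnum_w)\simeq\cnum_z\times\cnum_w$ and dividing by $\seisuu\times\Gamma$; the projection $\varphi\colon X\to\nbigm$ (forgetting $s$) is $S^1$-equivariant, and the pullback identifies $\bigl(C^\infty(\nbigm,\Omega^{0,\bullet}\nbigm),\delbar_{\nbigm}\bigr)$ with the $S^1$-invariant part of the Dolbeault complex of $X$, so $H^i(\nbigm,\nbigo_{\nbigm})$ is the $S^1$-invariant part of $H^i(X,\nbigo_X)$, which is then read off from the standard Hodge theory of a complex torus. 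Your Fourier/Koszul computation is more elementary and entirely self-contained: it avoids introducing the auxiliary fourth direction and the appeal to Dolbeault cohomology, at the modest cost of checking the mode-by-mode exactness by hand. The paper's approach, on the other hand, makes transparent the general principle (used repeatedly elsewhere in the paper) that mini-holomorphic objects on $\nbigm$ correspond to $S^1$-equivariant holomorphic objects on a complex manifold one dimension higher. The key nonvanishing step---that $(a(\gamma^*),b(\gamma^*))\neq(0,0)$ for $\gamma^*\neq 0$---is exactly the observation that $\del_t,\del_{\wbar}$ together span $Y\otimes\cnum$ over $\cnum$ because $\gamma^*$ is real; your justification of this is fine.
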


\begin{proof}
We have the isomorphism
$\real_s\times(\real_t\times\cnum_w)
\simeq\cnum_z\times\cnum_w$
given by
$(s,t,w)\longmapsto\big(s+\sqrt{-1}t,w\big)$.
We consider the action of $\seisuu\times\Gamma$
on $\real\times(\real\times\cnum)$
induced by the natural action of~$\seisuu$ on~$\real$
and the $\Gamma$-action on $\real\times\cnum$.
Let $X$ denote the quotient space.
We have the projection $\varphi\colon X\lrarr \nbigm$
induced by $(s,t,w)\longmapsto(t,w)$.
We have the natural $S^1=\real/\seisuu$-action on $X$,
and the quotient space is identified with $\nbigm$.
Let
$\varphi^{\ast}\colon C^{\infty}\big(\nbigm,\Omega^{0,i}\nbigm\big)
\lrarr
 C^{\infty}\big(X,\Omega^{0,i}(X)\big)$
be the map induced by
$\varphi^{\ast}(d\wbar)=d\wbar$,
$\varphi^{\ast}(dt)=\del_{\zbar}(t)\,d\zbar
=\frac{\sqrt{-1}}{2}d\zbar$
and
the natural pull back
$\varphi^{\ast}\colon C^{\infty}(\nbigm,\cnum)
\lrarr
 C^{\infty}(X,\cnum)$.
Then, it is easy to check that
it is a morphism of complexes,
and that it induces an isomorphism between
$C^{\infty}\big(\nbigm,\Omega^{0,\bullet}\nbigm\big)$
and
the $S^1$-invariant part of
$C^{\infty}\big(X,\Omega^{0,\bullet}(X)\big)$.
Therefore,
it induces the isomorphism of
$H^i(\nbigm,\nbigo_{\nbigm})$
and the $S^1$-invariant part of
$H^i(X,\nbigo_X)$.
Then, the claim of the lemma follows.
\end{proof}

\begin{Remark}
Let $\nbigm=\bigcup_{\lambda\in\Lambda} U_{\lambda}$
be an open covering such that
the following holds:
\begin{itemize}\itemsep=0pt
\item
There exist
 $\nu_{\lambda}\in
 C^{\infty}\big(U_{\lambda},\Omega^{0,1}_{U_{\lambda}}\big)$
such that $\varrho_{|U_{\lambda}}=\delbar\nu_{\lambda}$.
\item
There exist
 $\alpha_{\lambda,\mu}\in
 C^{\infty}(U_{\lambda}\cap U_{\mu})$
such that
 $\nu_{\lambda}-\nu_{\mu}=\delbar \alpha_{\lambda,\mu}$.
We assume that
$\alpha_{\lambda,\lambda}=0$
and
$\alpha_{\lambda,\mu}=-\alpha_{\mu,\lambda}$.
\end{itemize}
Let $\nbige_{\lambda}$
be the $\nbigo_{U_{\lambda}}$-module
obtained as the sheaf of
mini-holomorphic sections of
$\big(E_{U_{\lambda}},\delbar_E-\nu_{\lambda}\big)$.
We obtain the isomorphism
$\beta_{\lambda,\mu}\colon
 \nbige_{\lambda|U_{\lambda}\cap U_{\mu}}
\simeq
 \nbige_{\mu|U_{\lambda}\cap U_{\mu}}$
by the multiplication of
$\exp(-\alpha_{\lambda,\mu})$.
We obtain the holomorphic functions
$\theta_{\lambda,\mu,\kappa}$
on $U_{\lambda,\mu,\kappa}$
such that
$\beta_{\lambda,\mu}\circ\beta_{\mu,\kappa}\circ
 \beta_{\kappa,\lambda}
=\theta_{\lambda,\mu,\kappa}\id$.
Such a~tuple
$(\{\nbige_{\lambda}\},\{\beta_{\lambda,\mu}\})$
is called a twisted sheaf.
The cohomology class of
$[\theta_{\lambda,\mu,\kappa}]$
in $H^2(\nbigm,\nbigo_{\nbigm}^{\ast})$
depends only on $\varrho$,
and it is equal to the image of $\varrho$
via the natural map
$H^2(\nbigm,\nbigo_{\nbigm})\lrarr
 H^2(\nbigm,\nbigo^{\ast}_{\nbigm})$.
\end{Remark}

\subsection{Twisted difference modules and twisted mini-holomorphic bundles} \label{subsection;19.8.10.50}

We assume that
(i) the tuple $(a_i,\alpha_i)$ $(i=1,2,3)$ is an oriented base of
 $\real\times\cnum$,
(ii) $\alpha_1$ and $\alpha_2$ are linearly independent over $\real$,
(iii) the tuple $(\alpha_1,\alpha_2)$ is an oriented base of
 $\cnum$.
Let $\Gamma_0\subset\cnum$ be the lattice
generated by $\alpha_1$ and $\alpha_2$.

Let $\nbigm^{\cov}$ denote the quotient space of
$Y$ by the action of
$\seisuu\tte_1\oplus\seisuu\tte_2$.
We have the natural isomorphism
$\nbigm^{\cov}/\seisuu\tte_3\simeq\nbigm$.
The projection $Y\lrarr\cnum$
induces a morphism
$\nbigm^{\cov}\lrarr T:=\cnum/\Gamma_0$.

\subsubsection{Another mini-complex coordinate system}

\label{subsection;19.8.10.51}

We introduce another mini-complex coordinate system $(s,u)$ on $Y$.
We set
\[
 \gamma:=-\frac{a_1\alphabar_2-a_2\alphabar_1}
 {\alpha_1\alphabar_2-\alpha_2\alphabar_1}.
\]
We introduce another mini-complex coordinate system
$(s,u)$ on the mini-complex manifold $Y$ as follows:
\[
 s:=t+2\Re(\gamma w)=t+\gammabar\wbar+\gamma w,
\qquad
 u:=w.
\]
Then, we obtain
$\tte_i(s,u)=(s,u+\alpha_i)$ for $i=1,2$.
We also obtain
$\tte_3(s,u)=(s+\gminit,u+\gminia)$,
where
\[
 \gminit:=a_3+2\Re(\gamma\alpha_3),
\qquad
 \gminia:=\alpha_3.
\]
Note that $\gminit>0$,
which follows from that
the tuple
$\{(a_i,\alpha_i)\}_{i=1,2,3}$
is an oriented base of $\real\times\cnum$,
and that $\{\alpha_1,\alpha_2\}$
is an oriented base of $\cnum$.
We have the following relations
of complex vector fields:
\[
 \del_{\wbar}=\del_{\ubar}+\gammabar\del_s,
\qquad
 \del_w=\del_u+\gamma\del_s,
\qquad
 \del_t=\del_s.
\]

The product $\real_s\times T$ is equipped
with the natural mini-complex structure.
The mini-complex coordinate system $(s,u)$
induces an isomorphism of
mini-complex manifolds
$\nbigm^{\cov}\simeq
\real_s\times T$.

\subsubsection{Twisted mini-holomorphic bundles and twisted difference modules}\label{subsection;19.2.25.12}

Let $Z$ be a finite subset in $\nbigm$.
Let $Z^{\cov}\subset\nbigm^{\cov}\simeq\real_s\times T$
denote the pull back of $Z$.
For any $a<b$, we set
$\closedopen{a}{b}:=\{a\leq s<b\}$.
We take $\epsilon>0$
such that
$(\closedopen{-\epsilon}{0}\times T)\cap Z^{\cov}=\varnothing$.
Let $D$ be the image of
$Z^{\cov}\cap(\closedopen{-\epsilon}{\gminit}\times T)$
via the projection $\real_s\times T\lrarr T$.
For each $P\in D$,
we obtain the sequence
$0\leq s_{P,1}<s_{P,2}<\cdots<s_{P,m(P)}<\gminit$
by the condition:
\[
\{(s_{P,i},P)\,|\,i=1,\ldots,m(P)\}
=(\closedopen{0}{\gminit}\times\{P\})
 \cap
 Z^{\cov}.
\]
We set $\tau_{P,i}:=s_{P,i}/\gminit$.

We have the expression
$\varrho=\varrho_0\,dt\,d\wbar
=\varrho_0\,ds\,d\ubar$.
Let $\varrho_0^{\cov}$ be the function
on $\nbigm^{\cov}=\real_s\times T$
obtained as the pull back of $\varrho_0$
by $\nbigm^{\cov}\lrarr\nbigm$.
We define
$\nu_{\varrho}=\nu_{\varrho,\wbar}\,d\wbar
 \in C^{\infty}(\nbigm^{\cov},\Omega^{0,1}\nbigm)$
by setting
\[
 \nu_{\varrho,\wbar}(s,u)=\int_0^s\varrho^{\cov}_0(\sigma,u)\,d\sigma.
\]
We set
$\vartheta_{\varrho}:=\nu_{\varrho|\{\gminit\}\times T}$.
Let $\gbigl_{\varrho}$ be the holomorphic line bundle
on $T$
given by
the product bundle
$\cnum\times T$
with $\delbar_T-\vartheta_{\varrho}$.

Let $\big(E,\delbar_E\big)$ be a $\varrho$-twisted
mini-holomorphic bundle with
Dirac type singularity on $(\nbigm;Z)$.
Let us observe that
$(E,\delbar_E)$ induces a parabolic $\gbigl_{\varrho}$-twisted
difference module
$\Upsilon(E,\delbar_E)$
over $(T,(\vectau_{P})_{P\in D})$.

Let $\varrho^{\cov}\in C^{\infty}\big(\nbigm,\Omega^{0,1}\nbigm\big)$
be the pull back of $\varrho$.
Let $\big(E^{\cov},\delbar_{E^{\cov}}\big)$
denote the $\varrho^{\cov}$-twisted mini-holomorphic bundle
on $\nbigm^{\cov}$
obtained as the pull back of $\big(E,\delbar_E\big)$.
We set
$\big(\Etilde^{\cov},\delbar_{\Etilde^{\cov}}\big):=
 \big(E^{\cov},\delbar_{E^{\cov}}-\nu_{\varrho}\big)$
which is a mini-holomorphic bundle
on $\nbigm^{\cov}$.

Let $V$ be the locally free $\nbigo_T$-module
obtained as
$\Etilde^{\cov}_{|\{-\epsilon\}\times T}$.
It is independent of the choice of~$\epsilon$ as above,
up to canonical isomorphisms.

Let $\Phi\colon T\lrarr T$ be the morphism induced by
$\Phi(u)=u+\gminia$.
We have the natural isomorphism
\[
 \Phi^{\ast}\big(E^{\cov}_{|\{\gminit-\epsilon\}\times T}\big)
\simeq E^{\cov}_{|\{-\epsilon\}\times T}.
\]
It induces the following isomorphism
of holomorphic bundles on $T$:
\[
 \Phi^{\ast}
 \bigl(
 \big(\Etilde^{\cov},\delbar_{\Etilde^{\cov}}\big)_{|\{\gminit-\epsilon\}\times T}
 \bigr)
\simeq
 \big(\Etilde^{\cov},\delbar_{\Etilde^{\cov}}\big)_{|\{-\epsilon\}\times T}
 \otimes
 \gbigl_{\varrho}.
\]
The scattering map induces an isomorphism
\[
 \big(\Etilde^{\cov},\delbar_{\Etilde^{\cov}}\big)
 _{|\{-\epsilon\}\times T}(\ast D)
\simeq
 \big(\Etilde^{\cov},\delbar_{\Etilde^{\cov}}\big)
 _{|\{\gminit-\epsilon\}\times T}
 (\ast D).
\]
Hence, $V$ is equipped with an isomorphism
$V(\ast D)\simeq
 \bigl(
 (\Phi^{\ast})^{-1}(V)
 \otimes\gbigl_{\varrho}
\bigr) (\ast D)$.

For each $P\in D$ and for $i=1,\ldots,m(P)-1$,
we take $s_{P,i}<b_{P,i}<s_{P,i+1}$.
Let $\big(\Etilde^{\cov}_{|\{-\epsilon\}\times T}\big)_P$
denote the $\nbigo_{T,P}$-module obtained as
the stalk of the sheaf of holomorphic sections of
$\Etilde^{\cov}_{|\{-\epsilon\}\times T}$ at $P$.
Similarly,
$\big(\Etilde^{\cov}_{|\{b_{P,i}\}\times T}\big)_P$
denote the $\nbigo_{T,P}$-module obtained as
the stalk of the sheaf of holomorphic sections of
$\Etilde^{\cov}_{|\{b_{P,i}\}\times T}$ at $P$.
The scattering map induces isomorphisms of
$\nbigo_T(\ast P)_P$-modules:
\[
 \big(\Etilde^{\cov}_{|\{-\epsilon\}\times T}\big)_P(\ast P)
\simeq
 \big(\Etilde^{\cov}_{|\{b_{P,i}\}\times T}\big)_P(\ast P).
\]
Hence,
$\big(E^{\cov}_{|\{b_{P,i}\}\times T}\big)_P$
$(i=1,\ldots,m(P)-1)$
induce a sequence of lattices
$\nbigl_{P,i}$ $(i=1,\ldots,m(P)-1)$
of $V(\ast D)_{P}$.
Thus, we obtain
the following parabolic $\gminia$-difference
module on $(T,(\vectau_P)_{P\in D})$:
\[
 \Upsilon\big(E,\delbar_E\big):=
 \bigl(
 V,(\vectau_P,\vecnbigl_P)_{P\in D}
 \bigr).
\]
The following proposition is clear by the construction.
\begin{Proposition}\label{prop;19.3.2.10}
$\Upsilon$ induces
an equivalence between
$\varrho$-twisted mini-holomorphic bundles
with Dirac type singularity on $(\nbigm;Z)$
and
parabolic $\gbigl_{\varrho}$-twisted $\gminia$-difference modules
on $(T,(\vectau_P)_{P\in D})$.
\end{Proposition}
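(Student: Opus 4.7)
The plan is to construct an explicit quasi-inverse $\Upsilon^{-1}$ and verify that $\Upsilon$ and $\Upsilon^{-1}$ are mutually inverse up to natural isomorphism. The key preliminary observation is that on the cover $\nbigm^{\cov} \simeq \real_s \times T$, the pulled-back twist $\varrho^{\cov}$ is trivialized by $\nu_{\varrho}$ (since $\delbar\nu_{\varrho}=\varrho^{\cov}$), so that $\big(\Etilde^{\cov},\delbar_{\Etilde^{\cov}}\big)=\big(E^{\cov},\delbar_{E^{\cov}}-\nu_{\varrho}\big)$ is a genuine untwisted mini-holomorphic bundle on $\nbigm^{\cov}$ with Dirac type singularity on $Z^{\cov}$. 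Hence the correspondence factors through two steps: (i) an equivalence between mini-holomorphic bundles with Dirac type singularities on $(\real_s\times T;Z^{\cov})$ and the data of a single holomorphic bundle $V$ on $T$ together with a sequence of meromorphic scattering isomorphisms encoded by lattices; (ii) descent from $\nbigm^{\cov}$ to $\nbigm$ via the $\tte_3$-action, which translates into the twisted $\gminia$-difference relation on $T$.

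For step (i), I would use that mini-holomorphic functions on $\real_s\times T$ are pull-backs from $T$, so $\Etilde^{\cov}$ restricts on each slice $\{s_0\}\times T$ away from the image of $Z^{\cov}$ to a holomorphic bundle, and by the scattering map (the untwisted version of Lemma~\ref{lem;19.10.19.110}) these restrictions are canonically isomorphic as long as $s_0$ avoids the singular values $s_{P,i}$. Across each singular value the scattering map is meromorphic by the Dirac-type assumption together with the local analysis via the Kronheimer resolution, and the family of lattices $\nbigl_{P,i}$ inside $V(\ast D)_P$ records these jumps. Conversely, an arbitrary pair of lattices recovers a mini-holomorphic bundle on a small neighbourhood of $P$ with the prescribed scattering, by reversing the local model, exactly as in the periodic-in-one-direction case treated in \cite{Mochizuki-difference-modules}. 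For step (ii), the $\tte_3$-equivariance of $E^{\cov}$ gives an identification between $E^{\cov}_{|\{-\epsilon\}\times T}$ and $\Phi^{\ast}\big(E^{\cov}_{|\{\gminit-\epsilon\}\times T}\big)$; after untwisting by $\nu_{\varrho}$, the fact that $\nu_{\varrho}$ vanishes at $s=0$ while $\nu_{\varrho|\{\gminit\}\times T}=\vartheta_{\varrho}$ produces an additional tensor factor $\gbigl_{\varrho}$, yielding the required relation $V(\ast D)\simeq (\Phi^{\ast})^{-1}(V)\otimes\gbigl_{\varrho}(\ast D)$.

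The main obstacle is the local essential surjectivity in step (i): showing that the local structure of a mini-holomorphic bundle at a Dirac singularity is exactly parametrised by a pair of lattices in the stalk of the fibre bundle, so that $\Upsilon^{-1}$ is well defined near each $P\in D$. This amounts to applying the Kronheimer resolution recalled before Definition~\ref{df;19.10.19.100} in reverse, together with an $S^1$-equivariant GAGA-type argument on $\cnum^2$, to realize any prescribed pair of lattices as the jump of a suitable scattering map. Once this local model is in hand, the remaining verifications---independence of the auxiliary choices of $\epsilon$ and of the intermediate points $b_{P,i}$, functoriality of $\Upsilon$ and $\Upsilon^{-1}$ on morphisms, compatibility with the meromorphic identifications across $D$, and the natural isomorphisms $\Upsilon\circ\Upsilon^{-1}\simeq\id$ and $\Upsilon^{-1}\circ\Upsilon\simeq\id$---are straightforward consequences of the construction.
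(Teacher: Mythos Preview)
Your outline is correct and matches what the paper intends: the paper records this proposition as ``clear by the construction'' and gives no further argument, so your two-step plan (untwist on the cover to reduce to ordinary mini-holomorphic bundles on $\real_s\times T$, then descend along the $\tte_3$-action to produce the $\gbigl_{\varrho}$-twisted difference relation) is exactly the content behind that one-line proof.

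One point worth adjusting: the ``main obstacle'' you single out---local essential surjectivity at a Dirac point---is more elementary than your sketch suggests, and invoking the Kronheimer resolution in reverse together with an $S^1$-equivariant GAGA argument is unnecessary overhead. By the very definition of Dirac type singularity (Definition~\ref{df;19.10.19.1} together with the untwisted reformulation given just after it), an untwisted mini-holomorphic bundle on $\{|t|<\epsilon\}\times B_\delta\setminus\{(0,0)\}$ is precisely the data of two holomorphic bundles on $B_\delta$ (the restrictions to $\{t=\pm\epsilon/2\}$) together with a meromorphic isomorphism between them over $B_\delta^{\ast}$; this is already the same thing as a pair of lattices in a common meromorphic stalk. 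Hence the local inverse is a direct gluing of the constant-in-$s$ extensions of $\nbigl_{P,i-1}$ and $\nbigl_{P,i}$ along the given meromorphic identification, with no need to pass through $\cnum^2$. This is the construction carried out in \cite[Section~2]{Mochizuki-difference-modules}, which you already cite; the Kronheimer resolution enters only later, when one discusses adapted metrics and the analytic side of the correspondence.
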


\subsubsection{Comparison of stability conditions}

Let $\big(E,\delbar_E\big)$ be a $\varrho$-mini-holomorphic bundle
with Dirac type singularity on $(\nbigm;Z)$.

\begin{Proposition}\label{prop;19.2.27.50}
We have
$\mu^{\an}\big(E,\delbar_E\big)
=\gminit\pi\mu\bigl( \Upsilon\big(E,\delbar_E\big)\bigr)
+2\int_{\nbigm}\Re(\gamma\varrho_0)$.
As a result,
$\big(E,\delbar_E\big)$ is analytically (poly)stable
if and only if
$\Upsilon\big(E,\delbar_E\big)$ is (poly)stable.
\end{Proposition}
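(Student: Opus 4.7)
The plan is to pull everything back to the cover $\nbigm^{\cov}=\real_s\times T$, use the coordinate change to $(s,u)$ together with the $\varrho$-twist condition to extract the correction $2\int\Re(\gamma\varrho_0)$, and reduce the remainder to an untwisted computation via the gauge transformation $\delbar_E\mapsto\delbar_E-\nu_\varrho$.

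First, I would pull the integral $\deg^{\an}(E,\delbar_E)=\int_{\nbigm}\Tr G(h)\dvol_{\nbigm}$ back to a fundamental domain $FD=\closedopen{-\epsilon}{\gminit-\epsilon}\times T\subset\nbigm^{\cov}$. Using $\del_w=\del_u+\gamma\del_s$ and $\del_\wbar=\del_\ubar+\gammabar\del_s$, I would expand
\[
G(h)=[\nabla_{h,u},\nabla_{h,\ubar}]-\frac{\sqrt{-1}}{2}\nabla_{h,s}\phi_h+\gammabar[\nabla_{h,u},\nabla_{h,s}]+\gamma[\nabla_{h,s},\nabla_{h,\ubar}].
\]
The $\varrho$-twist relation $[\del_{E,t},\del_{E,\wbar}]=\varrho_0\id$ together with its Chern-dual $[\del_{E,h,t},\del_{E,h,w}]=-\overline{\varrho_0}\id$ lets me express the two cross-commutators in terms of $\varrho_0$ and derivatives of $\phi_h$. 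After taking traces, the $\varrho_0$-contributions combine to $2r\Re(\gamma\varrho_0)$, the $\del_u\Tr\phi_h$ and $\del_\ubar\Tr\phi_h$ parts are divergences on each fibre $T$ and vanish by Stokes on the compact torus, and the surviving $-2\sqrt{-1}|\gamma|^2\del_s\Tr\phi_h$ piece is a boundary contribution in $s$.

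Second, I would twist away $\varrho$ on the cover by setting $\bigl(\Etilde^{\cov},\delbar_{\Etilde^{\cov}}\bigr):=\bigl(E^{\cov},\delbar_{E^{\cov}}-\nu_\varrho\bigr)$, with $\nu_{\varrho,\wbar}(s,u)=\int_0^s\varrho_0^{\cov}(\sigma,u)d\sigma$, producing an untwisted mini-holomorphic bundle on $\nbigm^{\cov}$ with the same adapted metric $h$. By Lemma~\ref{lem;19.10.19.101}, the effect on $[\nabla_{h,u},\nabla_{h,\ubar}]-\frac{\sqrt{-1}}{2}\nabla_{h,s}\phi_h$ is a shift by $2\Re(\del_u\nu_{\varrho,\wbar})\id$ plus $2\Re(\gamma\varrho_0)\id$; the first piece again integrates to zero on each $T$-slice, while the second is absorbed into the correction already identified. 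The problem thus reduces to proving $\int_{FD}\Tr\widetilde G(h)\dvol=\gminit\pi\deg(\Upsilon(E,\delbar_E))$ for the untwisted bundle $\Etilde^{\cov}$, with the residual $|\gamma|^2\del_s\Tr\phi_h$ boundary piece cancelling against the corresponding boundary contribution from the untwisted side.

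Third, for the untwisted bundle I would follow the strategy of \cite{Mochizuki-difference-modules}: Chern-Weil on each slice $\{s\}\times T$ gives $\int_T\Tr[\nabla_{h,u},\nabla_{h,\ubar}]=\pi\deg\bigl(\det\Etilde^{\cov}_{|\{s\}\times T}\bigr)$ away from the Dirac values $s_{P,i}$, and the slice-wise line-bundle degree jumps across $s_{P,i}$ by $\deg(\nbigl_{P,i},\nbigl_{P,i-1})$ through the meromorphic scattering transition recorded by the lattices. Integrating in $s$ over $[-\epsilon,\gminit-\epsilon]$ and applying Stokes to $-\frac{\sqrt{-1}}{2}\nabla_{h,s}\phi_h$, the boundary pieces at $s=-\epsilon$ and $s=\gminit-\epsilon$ are identified via the scattering map (which realises $V(\ast D)\simeq(\Phi^\ast)^{-1}(V)\otimes\gbigl_\varrho(\ast D)$) and assemble with the internal jumps into the parabolic degree $\deg(V)+\sum_{P,i}(1-\tau_{P,i})\deg(\nbigl_{P,i},\nbigl_{P,i-1})$ multiplied by $\gminit\pi$. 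The main obstacle is this last step: carrying out the local analysis near each Dirac singularity using the adaptedness of $h$ (Definition~\ref{df;19.10.19.100}) to control convergence, matching the boundary Higgs integrals to the weights $\tau_{P,i}$, and verifying the cancellation of the residual $|\gamma|^2$-boundary term. These technical steps parallel the comparison of $\int G(h_{0,E_1})$ and $\int G(h_{2,E_1})$ in the proof of \cite[Proposition~9.4]{Mochizuki-difference-modules}.
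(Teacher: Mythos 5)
Your proposal is correct and follows essentially the same route as the paper: the paper's proof likewise rewrites $G(h)$ in the $(s,u)$-coordinates (Lemma~\ref{lem;19.2.25.1}), obtaining $G(h)=\bigl[\del_{E,h,u},\del_{E,\ubar}\bigr]-\sqrt{-1}\nabla_{h,\gminiv}\phi_h+2\Re(\gamma\varrho_0)\id_E$ for an invariant vector field $\gminiv$ on $\nbigm$, kills the Higgs term by Stokes on the closed manifold $\nbigm$ using the adapted-metric estimates (Lemmas~\ref{lem;19.10.19.200} and~\ref{lem;19.10.18.30}), and then applies Fubini and slicewise Chern--Weil to obtain $\gminit\pi\deg\Upsilon\big(E,\delbar_E\big)$, exactly as in your first and third steps. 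Two clarifications on the points you flag as the ``main obstacle'': since $\Tr\phi_h$ descends to $\nbigm$, every $s$-boundary (and fibrewise) Higgs contribution vanishes outright by periodicity, so the weights $(1-\tau_{P,i})$ arise solely from integrating the piecewise-constant slicewise degree in $s$ over one period (using $\sum_{P,i}\deg(\nbigl_{P,i},\nbigl_{P,i-1})=0$), not from any ``boundary Higgs integrals''; and your second step (untwisting by $\nu_{\varrho}$) is unnecessary, because $\int_{\{s\}\times T}\Tr\bigl[\nabla_{h,u},\nabla_{h,\ubar}\bigr]$ already computes the topological degree of $E^{\cov}_{|\{s\}\times T}$, which coincides with that of $\Etilde^{\cov}_{|\{s\}\times T}$.
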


\begin{proof} We consider the real vector field
$\gminiv:=
 2\gammabar\del_w
+2\gamma\del_{\wbar}
-\bigl(
 2|\gamma|^2-\frac{1}{2}
 \bigr)
 \del_t$
on $\nbigm$.
Let $h$ be any Hermitian metric of~$E$.
Let $\del_{E,\ubar}$ denote the operator on~$E$
induced by $\delbar_E$ and $\del_{\ubar}$.
Let $\del_{E,h,u}$ denote the operator on $E$
induced by $\del_{E,h}$ and $\del_{u}$.

\begin{Lemma}\label{lem;19.2.25.1}
$G(h)=
 \bigl[\del_{E,h,u},\del_{E,\ubar}\bigr]
 -\sqrt{-1}\nabla_{h,\gminiv}\phi_h
+2\Re(\gamma\varrho_0)\id_E$ holds.
\end{Lemma}
\begin{proof}
Because
$\del_{E,t}=\nabla_{h,t}-\sqrt{-1}\phi_h$
and
$\del_{E,h,t}=\nabla_{h,t}+\sqrt{-1}\phi_h$,
the following holds:
\[
 \del_{E,\ubar}
=\nabla_{h,\wbar}
-\gammabar\big(\nabla_{h,t}-\sqrt{-1}\phi_h\big),
\qquad
 \del_{E,h,u}
=\nabla_{h,w}
-\gamma\big(\nabla_{h,t}+\sqrt{-1}\phi_h\big).
\]
Hence, we obtain
\begin{gather*}
 \bigl[
 \del_{E,h,u},\del_{E,\ubar}
 \bigr]
=
\bigl[\nabla_{h,w},\nabla_{h,\wbar}
 \bigr]
-\gammabar\bigl[\nabla_{h,w},\nabla_{h,t}\bigr]
+\gammabar\sqrt{-1}\nabla_{h,w}\phi_h\\
\hphantom{\bigl[\del_{E,h,u},\del_{E,\ubar} \bigr]=}{}
+\gamma[\nabla_{h,\wbar},\nabla_{h,t}]
+\gamma\sqrt{-1}\nabla_{h,\wbar}\phi
-2\sqrt{-1}|\gamma|^2\nabla_{h,t}\phi_h.
\end{gather*}
According to Lemma \ref{lem;19.10.20.2},
we have
$\bigl[
 \nabla_{h,\wbar},\nabla_{h,t}
 \bigr]
-\sqrt{-1}\nabla_{h,\wbar}\phi
=-\varrho_0\id_E$
and
$\bigl[
 \nabla_{h,w},\nabla_{h,t}
 \bigr]
+\sqrt{-1}\nabla_{h,w}\phi
=\overline{\varrho_0}\id_E$.
Hence, we obtain
\begin{gather*}
 \bigl[
 \del_{E,h,u},\del_{E,\ubar}
 \bigr]
=\bigl[\nabla_{h,w},\nabla_{h,\wbar}\bigr]
+2\sqrt{-1}\gammabar\nabla_{w}\phi
+2\sqrt{-1}\gamma\nabla_{\wbar}\phi\nonumber\\
\hphantom{\bigl[ \del_{E,h,u},\del_{E,\ubar} \bigr]=}{}
-2\sqrt{-1}|\gamma|^2\nabla_t\phi
-2\Re(\gamma\varrho_0)\id_E.
\end{gather*}
Then, we obtain the claim of the lemma.
\end{proof}

Let $h$ be an adapted metric of $\big(E,\delbar_E\big)$.
According to Lemmas~\ref{lem;19.10.19.200} and~\ref{lem;19.10.18.30},
$G(h)$ and $\nabla_h\phi_h$ are $L^1$. Hence, we obtain
\begin{gather*}
 \deg^{\an}(E)
=\int_{\nbigm} \Tr G(h)
=\int_{\nbigm}
 \Tr \bigl[ \del_{E,h,u},\del_{E,\ubar}\bigr]\\
 \hphantom{\deg^{\an}(E)=}{}
-\int_{\nbigm}
 \sqrt{-1}\Tr\nabla_{h,\gminiv}\phi_h
+2\rank(E)\int_{\nbigm}\Re(\gamma\varrho_0).
\end{gather*}
Note that the volume form of $\nbigm$
is equal to
$\frac{\sqrt{-1}}{2}dt\,dw\,d\wbar
=\frac{\sqrt{-1}}{2}ds\,du\,d\ubar$.
By the Stokes theorem
and the estimate in Lemma~\ref{lem;19.10.18.30},
we obtain that
$\int_{\nbigm}\Tr\bigl(\nabla_{h,\gminiv}\phi_h\bigr)
 \frac{\sqrt{-1}}{2}dt\,dw\,d\wbar=0$.
By the Fubini theorem,
we obtain that
\begin{align*}
 \int_{\nbigm}
 \Tr\bigl[\del_{E,h,u},\del_{E,\ubar}\bigr]
& =\int_0^{\gminit}\,ds
 \int_{\{s\}\times T}
 \Tr\bigl[\del_{E,h,u},\del_{E,\ubar}\bigr]
 \,\frac{\sqrt{-1}}{2}\,du\,d\ubar
 \\
& =\int_0^{\gminit}\,ds
 \int_{\{s\}\times T}
 \pi c_1\big(E^{\cov}_{|\{s\}\times T}\big)
=\gminit\pi\deg\Upsilon\big(E,\delbar_E\big).
\end{align*}
Thus, we obtain the claim of Proposition~\ref{prop;19.2.27.50}.
\end{proof}

\subsection{Twisted monopoles and twisted mini-holomorphic bundles}\label{subsection;20.5.17.2}

\subsubsection{Statements}

Let $B$ be a real $2$-form on $\nbigm$.
We set $\varrho_B:=\sqrt{-1}B^{(0,1),\eta}$
and $\mu_B:=-\frac{1}{2}\int_{\nbigm}B^{(1,1)}\,dt$.
Let $(E,h,\nabla,\phi)$ be a $B$-twisted
monopole with Dirac type singularity
on $\nbigm\setminus Z$.
We have the associated
$\varrho_B$-twisted mini-holomorphic bundle
$\big(E,\delbar_E\big)$.
Note that
$G\big(h,\delbar_E\big)\,dw\,d\wbar=\sqrt{-1}B^{(1,1)}\id_E$.
Hence, we obtain
\[
 \mu^{\an}\big(E,\delbar_E\big)
=\frac{1}{\rank(E)}\int_{\nbigm}
 \Tr\big(G\big(h,\delbar_E\big)\big)
 \frac{\sqrt{-1}}{2}dt\,dw\,d\wbar
=-\frac{1}{2}
\int_{\nbigm}B^{(1,1)}\,dt
=\mu_B.
\]

We shall prove the following theorem
in Sections~\ref{subsection;19.3.3.1}--\ref{subsection;19.2.27.3},
which is a variant of
the correspondence in~\cite{Charbonneau-Hurtubise}
on the basis of~\cite{Simpson88}.

\begin{Theorem}\label{thm;19.2.27.10}
The above construction induces an equivalence
between
$B$-twisted monopoles with Dirac type singularity on
$\nbigm\setminus Z$
and analytically polystable
$\varrho_B$-twisted mini-holomorphic bundles
with Dirac type singularity with slope $\mu_B$
on $(\nbigm;Z)$.
\end{Theorem}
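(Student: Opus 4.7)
The plan is to establish the theorem as a Kobayashi--Hitchin correspondence in the sense of Simpson \cite{Simpson88}, adapted to accommodate the $\varrho$-twisting and the Dirac type singularities, following the template of Charbonneau--Hurtubise \cite{Charbonneau-Hurtubise}. The functor from monopoles to twisted mini-holomorphic bundles has been constructed in Section~\ref{subsection;19.10.19.32}, so most of the forward direction is already in place. Given a $B$-twisted monopole $(E,h,\nabla,\phi)$ with Dirac type singularities, Lemma~\ref{lem;19.10.20.2} together with Definition~\ref{df;19.10.19.20} yields a $\varrho_B$-twisted mini-holomorphic bundle $\big(E,\delbar_E\big)$ with Dirac singularities for which $h$ is adapted, and the slope identity $\mu^{\an}\big(E,\delbar_E\big)=\mu_B$ has been verified just before the statement. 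For polystability I would use the Hermitian orthogonal complement argument: given a $\varrho_B$-twisted mini-holomorphic subbundle $E'\subset E$, the Chern--Weil formula (\ref{eq;19.2.27.2}) combined with the analogous formula for $E/E'$ yields $\mu^{\an}\big(E',\delbar_{E'}\big)\leq\mu^{\an}\big(E,\delbar_E\big)$, with equality only when the orthogonal projection $p_{E'}$ is parallel with respect to both $\del_{E,\wbar}$ and $\del_{E,t}$; this splits the monopole as an orthogonal sum and permits induction on the rank.

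For the reverse direction, given an analytically polystable $\big(E,\delbar_E\big)$ of slope $\mu_B$, one must construct an adapted Hermitian metric $h$ whose Chern connection and Higgs field satisfy the $B$-twisted Bogomolny equation. By Corollary~\ref{cor;19.10.20.1} this reduces to finding $h$ for which $G(h)$ is a scalar multiple of $\id_E$, the correct scalar being forced by the slope identity. Locally near each $P\in Z$, choosing $\nu\in C^{\infty}\big(U,\Omega^{0,1}\big)$ with $\delbar\nu=\varrho_B$ reduces the problem to the untwisted setting of \cite{Mochizuki-difference-modules} via Lemma~\ref{lem;19.10.19.101}, so an adapted reference metric $h_0$ exists. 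I would then search for $h=h_0e^s$ with $s$ self-adjoint relative to $h_0$ and with controlled behaviour at $Z$, and solve the resulting nonlinear elliptic problem either by the continuity method or by the nonlinear heat flow as in Simpson's work; the analytic stability hypothesis of Definition~\ref{df;19.10.18.10} is expected to produce the necessary $C^0$ bound on $s$ via properness of the associated Donaldson-type functional. For polystable $\big(E,\delbar_E\big)=\bigoplus\big(E_i,\delbar_{E_i}\big)$ with each factor stable of slope $\mu_B$, one applies the stable case to each summand and takes the orthogonal sum. Uniqueness up to the automorphism group of the underlying polystable bundle would then follow by comparing two adapted solutions via the positive self-adjoint endomorphism $s=h_2h_1^{-1}$; a standard elliptic identity combined with the adaptedness at Dirac singularities and a maximum-principle argument should force $s$ to be parallel, hence block-constant with respect to the stable decomposition.

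The principal technical difficulty is executing the existence step rigorously at the Dirac singularities: one must adapt Simpson's existence theorem to the non-compact base $\nbigm\setminus Z$ while ensuring that the new metric $h$ remains adapted in the sense of Definition~\ref{df;19.10.19.100}, verify properness of the Donaldson functional on the space of adapted metrics under the analytic stability hypothesis, and check that the $\varrho_B$-twisting introduces no new global obstruction beyond what is handled locally by Lemma~\ref{lem;19.10.19.101}. These steps follow the pattern of Charbonneau--Hurtubise for $S^1$ times a Riemann surface, together with Yoshino's treatment \cite{Yoshino-KH} of Dirac singularities on mini-complex $3$-manifolds, with only minor modifications for the twisted case; the local reduction to the untwisted picture via primitives of $\varrho_B$ is what keeps the twisting from interfering with Simpson's machinery.
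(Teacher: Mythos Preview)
Your treatment of the forward direction (polystability via the Chern--Weil formula and orthogonal splitting) and of uniqueness (via the self-adjoint endomorphism $s$ and a subharmonicity argument) matches the paper's Propositions~\ref{prop;19.3.5.1} and~\ref{prop;19.3.5.3} essentially verbatim.

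The existence step, however, is handled quite differently. You propose to solve the Hermitian--Einstein problem directly on the $3$-manifold $\nbigm\setminus Z$, redoing Simpson's continuity or heat-flow argument in the mini-holomorphic setting and checking properness of a Donaldson functional near the Dirac points. The paper instead avoids all of this by a dimensional lift: it pulls the mini-holomorphic bundle back along $p\colon X\to\nbigm$, where $X=\real^4/\Gammatilde$ is a genuine K\"ahler $2$-torus on which $\big(\Etilde,\delbar_{\Etilde}\big)$ is an honest holomorphic bundle with an $S^1$-action, and then applies Simpson's Theorem~1 from \cite{Simpson88} directly to obtain an $S^1$-invariant Hermitian--Einstein metric, which descends to the desired monopole metric on $\nbigm$; adaptedness at the singularities is then read off from \cite{Mochizuki-Yoshino}. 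This buys a much shorter argument, since no new analysis needs to be developed. Your plan is viable in principle (indeed \cite{Yoshino-KH} carries out something of this sort), but as written it is a programme rather than a proof.

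Your handling of the twist is also different, and here there is a genuine gap. Local primitives $\nu$ of $\varrho_B$ do not patch to a global $(0,1)$-form when $[\varrho_B]\neq 0$ in $H^2(\nbigm,\nbigo_{\nbigm})$, so the local untwisting via Lemma~\ref{lem;19.10.19.101} cannot by itself reduce the global existence problem to the untwisted case. The paper's device is to tensor $\big(E,\delbar_E\big)$ with the inverse of a rank-one $\varrho_B$-twisted mini-holomorphic line bundle $\big(E_1,\delbar_{E_1}\big)$ of slope $\mu_B$ (produced via Lemma~\ref{lem;19.10.19.300} and Proposition~\ref{prop;19.3.2.10}); the tensor product is then an untwisted stable mini-holomorphic bundle of slope $0$, to which the untwisted existence applies, and one recovers the twisted monopole by tensoring back with the rank-one monopole on $E_1$ supplied by Lemma~\ref{lem;19.2.28.20}. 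You should replace the local-primitive reduction by this global tensor-product trick.
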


More precisely, Theorem \ref{thm;19.2.27.10} consists of Propositions~\ref{prop;19.3.5.1},~\ref{prop;19.3.5.3}, and~\ref{prop;19.3.5.2}
below.

\begin{Remark} According to Lemma~\ref{lem;19.10.19.30}
and Remark~\ref{rem;19.10.19.31},
it is essential to study the case where
\[
B=c\frac{\sqrt{-1}}{2}\,dw\wedge d\wbar
+\alpha\,dt\wedge d\wbar+
 \alphabar\,dt\wedge dw
\]
for $(c,\alpha)\in\real\times\cnum$.
We have
$\varrho_B=\sqrt{-1}\alpha\,dt\wedge dw$
and $\mu_B=-\frac{1}{2}\vol(\nbigm)c$
in this case.
\end{Remark}

\subsubsection{Polystability}\label{subsection;19.3.3.1}

Let $\big(E,\delbar_E,h\big)$ be a $B$-twisted
monopole with Dirac type singularity
on $\nbigm\setminus Z$.
\begin{Proposition}\label{prop;19.3.5.1}
$\big(E,\delbar_E\big)$ is analytically polystable
with $\deg^{\an}\big(E,\delbar_E\big)=\rank(E)\mu_B$.
\end{Proposition}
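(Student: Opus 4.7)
The plan is to prove this as an adaptation of the classical Kobayashi--Hitchin style argument for polystability of Hermite--Einstein bundles, with the Chern--Weil identity (\ref{eq;19.2.27.2}) playing the role of the usual subbundle slope formula and Lemma~\ref{lem;19.2.27.1} absorbing the Dirac singularities. First I would establish the degree equality. Because $(E,h,\nabla,\phi)$ is a $B$-twisted monopole, Corollary~\ref{cor;19.10.20.1} together with (\ref{eq;19.10.20.3}) gives $G(h,\delbar_E)\,dw\,d\wbar=\sqrt{-1}B^{(1,1)}\id_E$, which in particular makes $G(h)$ a scalar operator. Integrating the trace against the volume form $\tfrac{\sqrt{-1}}{2}dt\,dw\,d\wbar$ and carrying out the $dw\,d\wbar$--integration (or invoking the computation already displayed immediately before Theorem~\ref{thm;19.2.27.10}) yields $\deg^{\an}(E,\delbar_E)=\rank(E)\,\mu_B$.

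Next I would prove the stability inequality. Let $E'\subset E$ be any $\varrho_B$-twisted mini-holomorphic subbundle with $0<\rank(E')<\rank(E)$, and let $p_{E'}$ denote the $h$-orthogonal projection onto $E'$. By Lemma~\ref{lem;19.2.27.1} the induced metric $h_{E'}$ computes the correct invariant, $\deg(E',h_{E'})=\deg^{\an}(E',\delbar_{E'})$. Applying the Chern--Weil formula (\ref{eq;19.2.27.2}) pointwise on $\nbigm\setminus Z$ and using that $G(h)$ is a scalar multiple of $\id_E$, one gets
\[
\Tr G(h_{E'})=\rank(E')\,G(h)-\bigl|\del_{E,\wbar}p_{E'}\bigr|^2
-\tfrac{1}{4}\bigl|\del_{E,t}p_{E'}\bigr|^2.
\]
Since $|G(h)|_h$ is $L^1$ by Lemma~\ref{lem;19.10.19.200}, the right-hand side is a sum of an $L^1$ function and a non-positive function, so $\deg(E',h_{E'})$ is well defined in $\real\cup\{-\infty\}$; integrating over $\nbigm$ gives
\[
\deg^{\an}(E',\delbar_{E'})=\rank(E')\,\mu_B
-\bigl\|\del_{E,\wbar}p_{E'}\bigr\|_{L^2}^2
-\tfrac{1}{4}\bigl\|\del_{E,t}p_{E'}\bigr\|_{L^2}^2,
\]
hence $\mu^{\an}(E',\delbar_{E'})\leq\mu_B=\mu^{\an}(E,\delbar_E)$, with equality forcing $\del_{E,\wbar}p_{E'}=0$ and $\del_{E,t}p_{E'}=0$.

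Finally I would deduce polystability by a standard saturation-and-induction argument on the rank. If $(E,\delbar_E)$ is not already stable, take a destabilizing $\varrho_B$-twisted mini-holomorphic subbundle $E'$ of smallest possible rank with $\mu^{\an}(E',\delbar_{E'})=\mu_B$ (its existence follows once any destabilizer with equal slope is found, using the argument above applied to subbundles of $E'$). The equality case of the slope inequality forces $p_{E'}$ to be preserved by both $\del_{E,\wbar}$ and $\del_{E,t}$, so $p_{E'}$ commutes with $\delbar_E$, giving an orthogonal direct sum decomposition $(E,\delbar_E)=(E',\delbar_{E'})\oplus(E'',\delbar_{E''})$ of $\varrho_B$-twisted mini-holomorphic bundles; the complement $E''=\ker p_{E'}$ inherits an adapted Hermitian metric from $h$ and Dirac type singularities from $E$, so it too is a $B$-twisted monopole with Dirac type singularity, of the same slope $\mu_B$. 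Minimality of $\rank(E')$ ensures that $(E',\delbar_{E'})$ is analytically stable, and induction on the rank of $E''$ completes the proof.

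The main technical obstacle is the control of $\del_{E,\wbar}p_{E'}$ and $\del_{E,t}p_{E'}$ near $Z$; it is resolved not by direct estimates but by combining Lemma~\ref{lem;19.2.27.1} (so that $h_{E'}$ and any adapted metric on $E'$ give the same degree) with Lemma~\ref{lem;19.10.19.200} (which makes the right-hand side of the Chern--Weil formula a well-defined element of $\real\cup\{-\infty\}$), so that the inequality $\mu^{\an}(E',\delbar_{E'})\leq\mu_B$ and the characterization of its equality case are automatic without any separate regularity analysis at the Dirac singularities.
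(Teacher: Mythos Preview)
Your proposal is correct and follows essentially the same approach as the paper: both use the scalar form of $G(h)$ coming from the $B$-twisted monopole equation, the Chern--Weil identity (\ref{eq;19.2.27.2}), and Lemma~\ref{lem;19.2.27.1} to obtain the slope inequality, then deduce from the equality case that the orthogonal complement is a $\varrho_B$-twisted mini-holomorphic subbundle and conclude by induction on rank. Your version spells out the induction slightly more explicitly (selecting a minimal-rank destabilizer to guarantee stability of $E'$), but this is just an elaboration of the paper's phrase ``by an easy induction.''
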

\begin{proof}
Let $E'$ be a $\varrho_B$-twisted
mini-holomorphic subbundle of $E$.
Let $h_{E'}$ be the metric of $E'$ induced by $h$.
By the Chern--Weil formula~(\ref{eq;19.2.27.2})
and Lemma~\ref{lem;19.2.27.1},
we have
\begin{gather*}
 \deg^{\an}\big(E',\delbar_{E'}\big)=
 \int \Tr G(h_{E'})=
\rank(E')\mu_B
-\int\bigl|\del_{E,\wbar}p_{E'}\bigr|^2\\
\hphantom{\deg^{\an}\big(E',\delbar_{E'}\big)=}{}
-\frac{1}{4}\int\bigl|\del_{E,t}p_{E'}\bigr|^2
\leq \rank(E')\mu_B.
\end{gather*}
If $\mu^{\an}\big(E',\delbar_{E'}\big)=\mu_B$,
we obtain
$\del_{E,\wbar}p_{E'}=\del_{E,t}p_{E'}=0$.
We obtain that
the orthogonal complement $E^{\prime\bot}$
is also a $\varrho_B$-twisted
mini-holomorphic subbundle of~$E$.
Let $h_{E^{\prime\bot}}$ be the metric of $E^{\prime\bot}$
induced by~$h$.
Thus, we obtain a decomposition of
monopoles
$\big(E,\delbar_E,h\big)=
\big(E',\delbar_{E'},h_{E'}\big)
\oplus
 \big(E^{\prime\bot},\delbar_{E^{\prime\bot}},h_{E^{\prime\bot}}\big)$.
Hence, we obtain the polystability of $\big(E,\delbar_E\big)$ by an easy induction.
\end{proof}

\subsubsection{Uniqueness}\label{subsection;19.3.3.2}

The uniqueness is also standard.
\begin{Proposition}\label{prop;19.3.5.3}
Let $\big(E,\delbar_E\big)$ be a $\varrho_B$-twisted
mini-holomorphic bundle
with Dirac type singularity on $(\nbigm;Z)$.
Let $h_1$ and $h_2$ be adapted Hermitian-metrics
of $E$
such that $G(h_i)\,dw\,d\wbar=\sqrt{-1}B^{(1,1)}\id_E$.
Then, there exists a decomposition
$\big(E,\delbar_E\big)=
 \bigoplus \big(E_j,\delbar_{E_j}\big)$
such that
$(i)$~it is orthogonal with respect to both~$h_1$ and~$h_2$,
$(ii)$~there exist positive constants~$a_j$
 such that
 $h_{2|E_j}=a_jh_{1|E_j}$.
\end{Proposition}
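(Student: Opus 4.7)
The plan is to follow the standard Donaldson--Simpson comparison argument for Hermitian--Einstein metrics, adapted to the mini-holomorphic setting. Set $s := h_1^{-1}h_2$, which is a positive self-adjoint endomorphism of $E$ with respect to $h_1$. Because both $h_1$ and $h_2$ are adapted metrics of $\bigl(E,\delbar_E\bigr)$, at each $P \in Z$ they induce $C^{\infty}$-metrics on the common Kronheimer resolution $\Etilde_P$; consequently $s$ extends to a $C^{\infty}$ positive endomorphism on each $\Etilde_P$, and both $s$ and $s^{-1}$ are bounded in operator norm on all of $\nbigm\setminus Z$.

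The first step is to derive a PDE for $s$. Under $h_1 \mapsto h_2 = h_1\cdot s$, the operators $\del_{E,\wbar}$ and $\del_{E,t}$ are unchanged (they depend only on $\delbar_E$), while the standard identities $\del_{E,h_2,w} - \del_{E,h_1,w} = s^{-1}\del_{E,h_1,w}(s)$ and $\del_{E,h_2,t} - \del_{E,h_1,t} = s^{-1}\del_{E,h_1,t}(s)$ hold. Substituting into the formula $G(h) = [\del_{E,h,w},\del_{E,\wbar}] - \tfrac{1}{4}[\del_{E,t},\del_{E,h,t}]$ (which follows directly from the definition of $G(h)$ together with $\nabla_{h,t} = \tfrac{1}{2}(\del_{E,t}+\del_{E,h,t})$ and $\phi_h = \tfrac{\sqrt{-1}}{2}(\del_{E,t}-\del_{E,h,t})$), and using the hypothesis $G(h_1) = G(h_2)$, yields the key equation
\[
 \del_{E,\wbar}\bigl(s^{-1}\del_{E,h_1,w}(s)\bigr) + \tfrac{1}{4}\,\del_{E,t}\bigl(s^{-1}\del_{E,h_1,t}(s)\bigr) = 0.
\]
From this, the standard Simpson computation produces a Bochner-type inequality
\[
 \tfrac{1}{2}\Delta\bigl(\tr(s) + \tr(s^{-1})\bigr) \geq \bigl|\del_{E,\wbar}s\bigr|_{h_1}^2 + \tfrac{1}{4}\bigl|\del_{E,t}s\bigr|_{h_1}^2
\]
on $\nbigm\setminus Z$, where $\Delta = \del_t^2 + 4\del_w\del_\wbar$.

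I would then integrate this inequality against the volume form on $\nbigm\setminus Z$. Since $\nbigm$ is compact and the integrand on the left is an exact divergence, it integrates to zero once one verifies that the boundary contributions from shrinking neighbourhoods of $Z$ vanish in the limit. This forces both $\del_{E,\wbar}s = 0$ and $\del_{E,t}s = 0$, and taking $h_1$-adjoints then also gives $\del_{E,h_1,w}s = \del_{E,h_1,t}s = 0$, i.e., $s$ is covariantly constant for the Chern connection of $h_1$.

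The main obstacle is the rigorous handling of the boundary terms near the Dirac type singularities. The cleanest route is to transfer the analysis to the Kronheimer resolution $\Etilde_P$ at each $P \in Z$, where $s$ and its first derivatives are smooth and bounded, and the curvature estimates $|F(\nabla_h)|_h = O\bigl(d(P,\cdot)^{-2}\bigr)$ and $|\nabla_h\phi_h|_h = O\bigl(d(P,\cdot)^{-2}\bigr)$ from Lemma~\ref{lem;19.10.18.30} combine with the $O(r^2)$ surface measure of geodesic spheres around $P$ to make any boundary integral decay as the cutoff radius $r \to 0$. Once $s$ is known to be covariantly constant, its eigenvalues $a_j$ are locally constant functions on $\nbigm\setminus Z$, hence constant on each connected component, and extend across $Z$ via the Kronheimer resolutions. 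The corresponding eigenbundles $E_j$ are then mutually orthogonal with respect to both $h_1$ and $h_2$, preserved by $\delbar_E$ (yielding the sub-mini-holomorphic structures $\delbar_{E_j}$), and satisfy $h_{2|E_j} = a_j h_{1|E_j}$, giving the desired decomposition.
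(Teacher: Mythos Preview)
Your overall strategy is sound and follows the same Donaldson--Simpson template as the paper, but the paper's execution differs in two noteworthy ways.

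First, the paper works only with $\Tr(s)$, not with $\tr(s)+\tr(s^{-1})$, and derives the \emph{identity}
\[
-\Bigl(\del_{\wbar}\del_w+\tfrac14\del_t^2\Bigr)\Tr(s)
= -\bigl|s^{-1/2}\del_{E,h_1,w}(s)\bigr|_{h_1}^2
 -\tfrac14\bigl|s^{-1/2}\del_{E,h_1,t}(s)\bigr|_{h_1}^2\le 0,
\]
so that $\Tr(s)$ is subharmonic on $\nbigm\setminus Z$. Second, and more importantly, the paper does \emph{not} integrate by parts or analyse boundary terms near $Z$. Instead it observes that $\Tr(s)$ is bounded (since both metrics are adapted), so the subharmonicity extends across $Z$ in the sense of distributions (citing Simpson), and then compactness of $\nbigm$ forces $\Tr(s)$ to be constant. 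The vanishing of $\del_{E,h_1,w}(s)$ and $\del_{E,h_1,t}(s)$ then drops out of the identity, and the rest proceeds as you describe.

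Your route via integration by parts can be made to work, but the boundary analysis you sketch is somewhat muddled: the curvature estimates $|F(\nabla_h)|_h=O(d^{-2})$ from Lemma~\ref{lem;19.10.18.30} are not what controls the boundary integral $\int_{S^2(r)}\partial_\nu\tr(s)$. What you actually need is a bound on $|\nabla\tr(s)|$ near each $P\in Z$, and that comes from the smoothness of $s$ on the Kronheimer resolution together with the scaling of the map $\varphi(z_1,z_2)=(|z_1|^2-|z_2|^2,2z_1z_2)$, not from curvature. The paper's distributional/maximum-principle argument sidesteps this entirely and is both shorter and more robust.
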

\begin{proof} Let $s$ be the automorphism of $E$ determined by $h_2=h_1s$.

\begin{Lemma}The following inequality holds on $\nbigm\setminus Z$:
\[
 -\left(
 \del_{\wbar}\del_w
 +\frac{1}{4}\del_{t}^2
 \right)
 \Tr(s)
=-\bigl|
 s^{-1/2}\del_{E,h_1,w}(s)
 \bigr|^2_{h_1}
-\frac{1}{4}
 \bigl|
 s^{-1/2}
 \del_{E,h_1,t}(s)
 \bigr|^2_{h_1}
\leq 0.
\]
\end{Lemma}
\begin{proof} In the case $\varrho_B=0$,
it follows from
\cite[Corollary 2.30]{Mochizuki-difference-modules}.
(Note that $\del_{E,h_1,t}$ is denoted by
$\del'_{E,h_1,t}$ in
\cite[Corollary 2.30]{Mochizuki-difference-modules}.)
Let us study the general case.
We have only to check the inequality locally around any point $P$
of $\nbigm\setminus Z$.
We take a small neighbourhood $U$ of $P$
and $\nu=\nu_t\,dt+\nu_{\wbar}\,d\wbar
 \in C^{\infty}(U,\Omega^{0,1})$
such that
$\delbar_E-\nu\,\id$
is mini-holomorphic.
We obtain
$\del^{\nu}_{E,h,w}=\del_{E,h,w}+\overline{\nu_{\wbar}}\id$
and
$\del^{\nu}_{E,h,t}=\del_{E,h,t}+\overline{\nu_t}\id$.
Hence, we obtain
$\del^{\nu}_{E,h,w}(s)
=[\del^{\nu}_{E,h,w},s]
=[\del_{E,h,w},s]
=\del_{E,h,w}(s)$.
Similarly,
we obtain
$\del^{\nu}_{E,h,t}(s)
=\del_{E,h,t}(s)$.
Hence, the general case
can be reduced to the case $\varrho_B=0$.
\end{proof}

By the assumption, $\Tr(s)\geq 0$ is bounded.
Then, the inequality holds on $\nbigm$ in the sense of distributions.
(See the proof of \cite[Proposition~2.2]{Simpson88}.)
Hence, we obtain that
$\Tr(s)$ is constant,
and
$\del_{E,h_1,w}(s)=\del_{E,h_1,t}(s)=0$.
Because $s$ is self-adjoint with respect to $h_1$,
we also obtain that
$\del_{E,\wbar}(s)=\del_{E,t}(s)=0$.
We obtain that the eigenvalues of $s$ are constant,
and the eigen decomposition
$E=\bigoplus E_i$ is compatible with
the mini-holomorphic structure.
Then, the claim of the proposition follows.
\end{proof}

\subsubsection{Construction of twisted monopoles}\label{subsection;19.2.27.3}

Let $\big(E,\delbar_E\big)$ be a stable $\varrho_B$-twisted
mini-holomorphic bundle
with Dirac type singularity on $(\nbigm;Z)$
with $\mu^{\an}\big(E,\delbar_E\big)=\mu_B$.

\begin{Proposition}\label{prop;19.3.5.2}
There exists a Hermitian metric $h$
of $\big(E,\delbar_E\big)$
such that $\big(E,\delbar_E,h\big)$
is a~$B$-twisted monopole
with Dirac type singularity on $\nbigm\setminus Z$.
\end{Proposition}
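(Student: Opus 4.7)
The plan is to adapt Simpson's existence theorem \cite{Simpson88} for Hermitian--Einstein metrics on stable bundles to the present mini-holomorphic setting with Dirac type singularities, along the lines of the Charbonneau--Hurtubise argument \cite{Charbonneau-Hurtubise} for singular monopoles on $S^1 \times \Sigma$. First, I would fix an adapted Hermitian metric $h_0$ of $\big(E,\delbar_E\big)$, which exists because in a neighbourhood of each $P\in Z$ the Kronheimer resolution $\Etilde_P$ is an $S^1$-equivariant holomorphic bundle on the smooth four-dimensional chart $\Utilde$, and smooth Hermitian metrics on $\Etilde_P$ descend to adapted metrics of $\big(E,\delbar_E\big)$ in the sense of Definition \ref{df;19.10.19.100}. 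By Lemmas \ref{lem;19.10.19.200} and \ref{lem;19.10.18.30}, the quantities $|G(h_0)|_{h_0}$, $|\nabla_{h_0}\phi_{h_0}|_{h_0,g_{\nbigm}}$ and $|F(\nabla_{h_0})|_{h_0,g_{\nbigm}}$ are all $L^1$ on $\nbigm\setminus Z$ with controlled singular behaviour near $Z$.

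Next, I would seek the desired metric in the form $h = h_0\,s$ with $s$ positive self-adjoint relative to $h_0$, so that the twisted monopole condition $G(h) = \beta\,\id_E$, where $\beta$ denotes the scalar defined by $\sqrt{-1}B^{(1,1)} = \beta\,dw\wedge d\wbar$, reduces to a scalar nonlinear elliptic equation in $s$. I would solve it via the Donaldson--Simpson parabolic flow
\[
 s_\tau^{-1}\,\partial_\tau s_\tau
 = -2\bigl(G(h_\tau) - \beta\,\id_E\bigr),
\qquad s_0 = \id_E.
\]
Away from $Z$ this flow is strictly parabolic and short-time existence is standard; near each $P\in Z$, pulling back through the Kronheimer resolution $\Utilde \to U$ converts the flow into an $S^1$-equivariant Hermitian--Einstein-type flow on a smooth four-ball, where the short- and long-time existence theory of \cite{Simpson88} applies verbatim, and the local flows assemble into a global one on $\nbigm\setminus Z$ by parabolic uniqueness.

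The main obstacle is the uniform $C^0$-bound on $\log s_\tau$, which is what upgrades long-time existence into convergence to a limit $s_\infty$ for which $h_\infty := h_0\,s_\infty$ is again an adapted metric. I would establish this bound by the Uhlenbeck--Yau strategy: if $\sup_\tau \|\log s_\tau\|_{L^\infty}$ were unbounded, a rescaled weak limit would produce a non-trivial weakly $\varrho_B$-twisted mini-holomorphic subsheaf $E' \subsetneq E$. On each Kronheimer resolution $\Utilde$, this subsheaf extends across the exceptional set in the $S^1$-equivariant holomorphic category, yielding a genuine $\varrho_B$-twisted mini-holomorphic subbundle of $E$ over a dense open subset of $\nbigm\setminus Z$. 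Combining the Chern--Weil identity (\ref{eq;19.2.27.2}), the degree-independence Lemma \ref{lem;19.2.27.1}, and the integration-by-parts argument used in the proof of Proposition \ref{prop;19.3.5.1}, one concludes $\mu^{\an}\big(E',\delbar_{E'}\big) \ge \mu^{\an}\big(E,\delbar_E\big) = \mu_B$, contradicting the assumed analytic stability. Once $s_\infty$ is obtained, elliptic regularity on each Kronheimer resolution shows that $s_\infty$ is smooth in the resolved four-dimensional picture, so $h_\infty$ is adapted and $\big(E,\delbar_E,h_\infty\big)$ is the desired $B$-twisted monopole with Dirac type singularity on $\nbigm\setminus Z$.
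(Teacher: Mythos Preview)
Your outline diverges from the paper's proof in one essential respect, and that divergence is where the real gap lies. The paper does \emph{not} run a heat flow on the three-manifold $\nbigm\setminus Z$ patched with local Kronheimer resolutions. Instead it performs a \emph{global} $S^1$-lift: one forms the compact K\"ahler surface $X=(\real_s\times Y)/\Gammatilde$ with the projection $p\colon X\to\nbigm$, and the mini-holomorphic bundle $(E,\delbar_E)$ (in the untwisted case $\varrho_B=0$) pulls back to an $S^1$-equivariant holomorphic bundle $(\Etilde,\delbar_{\Etilde})$ on $X\setminus p^{-1}(Z)$ with $\sqrt{-1}\Lambda F(\htilde_0)=p^{-1}(G(h_0))$. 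Analytic stability of $(E,\delbar_E,h_0)$ is then equivalent to $S^1$-equivariant analytic stability of $(\Etilde,\delbar_{\Etilde},\htilde_0)$, and Simpson's existence theorem \cite[Theorem~1]{Simpson88} is invoked as a black box to produce an $S^1$-invariant Hermitian--Einstein metric $\htilde$, which descends to the desired $h$. The Dirac type singularity of $h$ at each $P\in Z$ then follows from \cite[Theorem~3]{Mochizuki-Yoshino}, using only that $h$ and $h_0$ are mutually bounded.

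Your plan, by contrast, attempts to run the Donaldson--Simpson flow intrinsically on the three-manifold and to treat the singularities by purely local four-dimensional charts. The trouble is that $\nbigm$ is not K\"ahler, so none of the analytic package of \cite{Simpson88} --- the Donaldson functional, the $C^0$ estimate, the construction of a destabilising weakly holomorphic subsheaf --- is available as stated; you would have to rebuild that theory for mini-holomorphic bundles from scratch. Your sentence ``the local flows assemble into a global one on $\nbigm\setminus Z$ by parabolic uniqueness'' conflates two different flows (one on a local four-ball, one on a three-manifold) without explaining why their solutions coincide on the overlap. The global lift is precisely the device that makes this identification automatic and lets Simpson's theorem do all the work. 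You also omit two further steps the paper needs: first normalising the initial metric so that $\det(h_0)$ already satisfies the rank-one $B$-twisted Bogomolny equation (this uses the rank-one Lemma~\ref{lem;19.2.28.20} and is required for Simpson's determinant-fixing hypothesis); and second, reducing the general $\varrho_B\neq 0$ case to the untwisted case by tensoring with a suitable rank-one $\varrho_B$-twisted bundle $(E_1,\delbar_{E_1})$ of the correct degree, supplied by Lemma~\ref{lem;19.10.19.300} and Proposition~\ref{prop;19.3.2.10}.
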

\begin{proof}As a preliminary,
let us consider the rank one case.
Note that the stability condition is trivial in the rank one case.

\begin{Lemma}\label{lem;19.2.28.20}
Assume $\rank E=1$.
Then, there exists a Hermitian metric $h$
of $\big(E,\delbar_E\big)$
such that $\big(E,\delbar_E,h\big)$ is a $B$-twisted monopole
with Dirac type singularity on $\nbigm\setminus Z$.
\end{Lemma}
\begin{proof}
We take a Hermitian metric $h_0$ of $E$
such that the following holds:
\begin{itemize}\itemsep=0pt
\item
Each $P\in Z$ has a neighbourhood $\nbigu_P$ in $\nbigm$
such that
(i) $G(h_0)=0$ on $\nbigu_P\setminus\{P\}$,
(ii) $P$ is Dirac type singularity of
 the monopole $\big(E,\delbar_{E},h_0\big)_{|\nbigu_P\setminus\{P\}}$.
\end{itemize}
Let $f$ be any $C^{\infty}$-function on $\nbigm$.
Note that
$G\big(h_0e^f\big)-G(h_0)=4^{-1}\Delta f$,
where $\Delta$ denote the Laplacian of $\nbigm$.
(See \cite[Section~2.8.4]{Mochizuki-difference-modules}
for the untwisted case.
The twisted case can be argued similarly.)
Because
\[
\int_{\nbigm} G(h_0)\frac{\sqrt{-1}}{2}\,dt\,dw\,d\wbar
=\mu_B=-\frac{1}{2}\int_{\nbigm}B^{(1,1)}\,dt,
\]
there exists an $\real$-valued $C^{\infty}$-function
$f_1$ such that
$(\Delta f_1)dw\,d\wbar=
-4\bigl(
 G(h_0)\,dw\,d\wbar
-\sqrt{-1}B^{(1,1)}
\bigr)$.
Then, the claim of
Lemma~\ref{lem;19.2.28.20}
follows.
\end{proof}

Let us study the case
where $\varrho_B=0$,
which implies $B=B^{(1,1)}$.
On $\real^4=\real\times \real^3$,
we use the real coordinate system $(s,t,x,y)$
and the complex coordinate system
$(z,w)$ given by $z=s+\sqrt{-1}t$ and $w=x+\sqrt{-1}y$.

Let $\Gammatilde$ denote the lattice of
$\real^4=\real\times(\real\times\cnum)$
generated by
$(1,0,0)$ and $(0,a_i,\alpha_i)$ $(i=1,2,3)$.
We consider the action of $\Gammatilde$ on
$\real^4$
induced by
the natural $\seisuu$-action on $\real$
and the $\Gamma$-action on $\real\times\cnum$.
Let $(X,g_X)$ denote the K\"ahler manifold
obtained as the quotient of $(\cnum^2,dz\,d\zbar+dw\,d\wbar)$
by the $\Gammatilde$-action.
Let $p\colon X\lrarr \nbigm$ denote the naturally defined projection.

We set $\Etilde:=p^{-1}(E)$ on $X\setminus p^{-1}(Z)$.
It is equipped with the complex structure
$\delbar_{\Etilde}$
determined by
\[
\del_{\Etilde,\wbar}p^{-1}(u)=
 p^{-1}(\del_{E,\wbar}u),
\qquad
 \del_{\Etilde,\zbar}p^{-1}(u)=
 \frac{1}{2}\cdot
 p^{-1}\big( \phi\cdot u+\sqrt{-1}\del_{E,t}u\big)
\]
for sections $u$ of $E$.
For any adapted Hermitian metric $h_0$ of $E$,
set $\htilde_0:=p^{-1}(h_0)$.

Let $F\big(\htilde_0\big)$
denote the curvature of the Chern connection of
$\big(\Etilde,\delbar_{\Etilde},\htilde_0\big)$.
Let $\Lambda$ denote the contraction
from $(1,1)$-forms to $(0,0)$-forms
with respect to the K\"ahler form of~$(X,g_X)$.
Then,
$\sqrt{-1}\Lambda F\big(\htilde_0\big)=p^{-1}\bigl(G(h_0)\bigr)$
holds.

For any saturated coherent
$\nbigo_{X\setminus p^{-1}(Z)}$-submodule
$\Etilde'\subset \Etilde$,
we have a closed complex analytic subset
$W\subset X\setminus p^{-1}(Z)$
with complex codimension $2$
such that $\Etilde'$ is a subbundle of $\Etilde$
outside of $W$.
We have the induced metric $\htilde_{0,\Etilde'}$
of $\Etilde'_{|X\setminus (p^{-1}(Z)\cup W)}$.
We define
\[
 \deg\big(\Etilde',\htilde_0\big):=
 \sqrt{-1}\int \Tr\Lambda F\big(\htilde_{0,\Etilde'}\big)\dvol_X.
\]
Because of the Chern--Weil formula,
it is well defined in $\real\cup\{-\infty\}$
as explained in~\cite{Simpson88}.
Then,
$\big(\Etilde,\delbar_{\Etilde},\htilde_0\big)$
is defined to be analytically stable
with respect to the $S^1$-action
if
\[
 \frac{\deg\big(\Etilde',\htilde_0\big)}{\rank \Etilde'}
<
 \frac{\deg\big(\Etilde,\htilde_0\big)}{\rank \Etilde}
\]
holds for any $S^1$-invariant
saturated subsheaf $\Etilde'\subset \Etilde$
with
$0<\rank\Etilde'<\rank \Etilde$.
The following is clear.
\begin{Lemma}\label{lem;19.2.27.4}
$\big(\Etilde,\delbar_{\Etilde},\htilde_0\big)$
is analytically stable with respect to the $S^1$-action
if and only if
$\big(E,\delbar_E,h_0\big)$ is analytically stable.
\end{Lemma}

According to Lemma~\ref{lem;19.2.28.20},
there exists a Hermitian metric $h_{\det(E)}$
such that
$\big(E,\delbar_E,h_{\det(E)}\big)$
is a $(\rank E)B$-twisted monopole.
We take an adapted Hermitian metric $h_{-1}$
such that
each $P\in Z$ has a neighbourhood $\nbigu_P$
such that
$G(h_{-1})_{|\nbigu_P\setminus\{P\}}=0$.
An $\real$-valued $C^{\infty}$-function $f$
is determined by
$\det(h_{-1})=h_{\det(E)}e^{f}$.
We set $h_0=h_{-1}e^{-f/\rank(E)}$.
Then, $h_0$ is an adapted metric of $E$.
By Lemma~\ref{lem;19.2.27.4},
$\big(\Etilde,\delbar_{\Etilde},\htilde_0\big)$
is analytically stable with respect to
the $S^1$-action.
We also have
$\Lambda \Tr F\big(\htilde_0\big)=\sqrt{-1}\rank(E)p^{-1}(B)$.
According to a theorem of Simpson
\cite[Theorem~1]{Simpson88},
there exists an $S^1$-invariant metric
$\htilde$ of $\Etilde$
such that
(i) $\det\big(\htilde\big)=\det\big(\htilde_0\big)$,
(ii) $\Lambda F\big(\htilde\big)=\sqrt{-1}p^{-1}(B)\id_{\Etilde}$,
(iii) $\htilde$ and $\htilde_0$ are mutually bounded.
We obtain the corresponding metric~$h$ of~$E$,
for which $G(h)=\sqrt{-1}B\id_E$ holds.
Because~$h$ and~$h_0$ are mutually bounded,
each $P\in Z$ is a Dirac type singularity of
$\big(E,\delbar_E,h\big)$
which is implied by \cite[Theorem~3]{Mochizuki-Yoshino}.
Thus, we obtain the claim of Proposition~\ref{prop;19.3.5.2}
in the case $\varrho_B=0$.

Let us study the case where $\varrho_B$ is not necessarily $0$.
\begin{Lemma}
There exist a finite subset $Z_1\subset\nbigm$
and a $\varrho_B$-twisted mini-holomorphic bundle
$\big(E_1,\delbar_{E_1}\big)$
with Dirac type singularity of rank one
on $(\nbigm;Z_1)$
such that
$\deg^{\an}\big(E_1,\delbar_{E_1}\big)=\mu_B$.
\end{Lemma}
\begin{proof}
It follows from Lemma \ref{lem;19.10.19.300} and Proposition \ref{prop;19.3.2.10}.
\end{proof}

We set
$\big(E',\delbar_{E'}\big):=\big(E,\delbar_E\big)\otimes\big(E_1,\delbar_{E_1}\big)^{-1}$.
Then,
$\big(E',\delbar_{E'}\big)$ is a stable mini-holomorphic bundle
with $\mu^{\an}\big(E',\delbar_{E'}\big)=0$.
According to the claim in the case $\varrho_B=0$,
there exists an adapted Hermitian metric $h'$
of $\big(E',\delbar_{E'}\big)$
such that
$\big(E',\delbar_{E'},h'\big)$ is a monopole.
According to Lemma~\ref{lem;19.2.28.20},
there exists a Hermitian metric $h_1$ of $E_1$
such that
$\big(E_1,\delbar_{E_1},h_1\big)$
is a $B$-twisted monopole
with Dirac type singularity.
Let $h$ be the Hermitian metric of $E$
induced by~$h'$ and~$h_1$.
Then, $h$ is adapted to $\big(E,\delbar_E\big)$,
and
$\big(E,\delbar_E,h\big)$ is a $B$-twisted monopole.
Thus the proof of Proposition~\ref{prop;19.3.5.2}
is completed.
\end{proof}

\section{A more sophisticated formulation of the stability condition}

We explain that the analytic stability condition (Definition~\ref{df;19.10.18.10})
is equivalent to the stability condition
introduced by Kontsevich and Soibelman
in the case $\varrho=0$ (see Section~\ref{subsection;19.3.2.20}).
This section is devoted to explain their idea of degree.

\subsection{Preliminary}

\subsubsection{Closed 1-forms and 1-homology }

Let $A$ be a $3$-dimensional manifold.
Let $Z^i_{\DR}(A)$ denote the space of
closed $i$-form $\tau$ on $A$.
Let $B$ be finite subset of $A$.
Let $H_j(A,B)$ denote the relative $j$-th homology group
with $\real$-coefficient.

Let $\gamma$ be any element of $H_1(A,B)$.
We take a representative of $\gamma$
by a smooth $1$-chain $\gammatilde$.
For any $\omega\in Z^1_{\DR}(A)$,
the number $\int_{\gammatilde}\omega$
is independent of the choice of
a representative $\gammatilde$.
They are denoted by $\int_{\gamma}\omega$.

Let $C^{\infty}(A,B)$ denote the space of
$C^{\infty}$-functions $f$ on $A$
such that $f(P)=0$ for any $P\in B$.
Let $Z^1_{\DR}(A)$ denote the space of
closed $1$-forms on $A$.
Let $B^1_{\DR}(A,B)$ denote the image of
$d\colon C^{\infty}(A,B)\lrarr Z^1_{\DR}(A)$.
Because $\int_{\gamma}df=0$
for any $f\in C^{\infty}(A,B)$,
we obtain the well defined map
\[
 \int_{\gamma}\colon \
 Z^1_{\DR}(A)\big/
 B^1_{\DR}(A,B)
\lrarr\real.
\]

\subsubsection{Duality}

Suppose that $A$ is compact and oriented.
Let $H^j(A\setminus B)$
denote the $j$-th de Rham cohomology group
of $A\setminus B$.
Let $H^j_c(A\setminus B)$
denote the $j$-th de Rham cohomology group
with compact support.
We have the non-degenerate pairing between
$H^2(A\setminus B)$
and $H^1_c(A\setminus B)$
induced by the cup product and the integration.
We also have the non-degenerate pairing
between
$H^1_c(A\setminus B)$
and
$H_1(A,B)$
induced by the integration.
Hence, we obtain the isomorphism
\[
 \Phi_{A,B}\colon \ H^2(A\setminus B)\simeq H_1(A,B).
\]
By definition,
for any $a\in H^2(A\setminus B)$
and $b\in H^1_c(A\setminus B)$,
the following holds:
\[
 \int_{\Phi_{A,B}(a)}b=\int_{A}a\wedge b.
\]

Take any Riemannian metric $g_A$ of $A$.
For any $j$-form $\tau$ on $A\setminus B$,
let $|\tau|_{g_A}$ denote the function on $A\setminus B$
obtained as the norm of $\tau$ with respect to $g_A$.
\begin{Lemma}\label{lem;19.2.27.12}
Let $\tau\in Z^2_{\DR}(A\setminus B)$
such that $|\tau|_{g_A}$ is an $L^1$-function on $A$.
Then, the following holds
for any $\rho\in Z^1_{\DR}(A)$:
\[
 \int_{\Phi_{A,B}([\tau])}\rho
=\int_{A}\rho\wedge \tau.
\]
Here, $[\tau]\in H^2_{\DR}(A\setminus B)$
denotes the cohomology class of $\tau$.
\end{Lemma}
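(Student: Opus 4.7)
The plan is to reduce to the defining property of $\Phi_{A,B}$ by replacing $\rho$ with a cohomologous representative that has compact support in $A\setminus B$. Since $B$ is finite and $\rho$ is closed, the Poincar\'e lemma on a small coordinate ball $U_P$ around each $P\in B$ yields a primitive $f_P$ with $f_P(P)=0$. Taking smooth cutoffs $\chi_P$ that equal $1$ on smaller disjoint neighbourhoods $V_P\subset U_P$ of the points of $B$, I set $f:=\sum_{P\in B}\chi_P f_P\in C^{\infty}(A)$; then $f$ vanishes on $B$, and $\rho':=\rho-df$ represents the same class as $\rho|_{A\setminus B}$ in $H^1(A\setminus B)$ while vanishing identically on the neighbourhood $\bigcup_{P\in B}V_P$ of $B$. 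Hence $\rho'$ defines a class in $H^1_c(A\setminus B)$.

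By the defining property of $\Phi_{A,B}$ recalled just above the statement,
\[
\int_{\Phi_{A,B}([\tau])}\rho'=\int_{A}\rho'\wedge\tau,
\]
and the right-hand side converges because $\rho'$ vanishes near $B$ while $\tau$ is $L^1$ on $A$. On the chain side, $\int_{\Phi_{A,B}([\tau])}df=0$ since $f\in C^{\infty}(A,B)$, so $\int_{\Phi_{A,B}([\tau])}\rho=\int_{\Phi_{A,B}([\tau])}\rho'$. The claimed identity is therefore equivalent to showing $\int_{A}df\wedge\tau=0$.

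This last vanishing is the main obstacle, because $\tau$ need not be bounded near $B$ and the usual Stokes argument requires controlling a boundary term by the $L^1$ data alone. My approach is to apply Stokes' theorem on $A\setminus\bigcup_{P\in B}B_\epsilon(P)$: because $d\tau=0$ on $A\setminus B$, we have $d(f\tau)=df\wedge\tau$ there, and
\[
\int_{A\setminus\bigcup_{P}B_\epsilon(P)}df\wedge\tau
=-\sum_{P\in B}\int_{\partial B_\epsilon(P)}f\tau.
\]
On $\partial B_\epsilon(P)$ one has $|f|\leq C\epsilon$ since $f(P)=0$, so each boundary term is bounded by $C\epsilon\int_{\partial B_\epsilon(P)}|\tau|_{g_A}$. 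The $L^1$ hypothesis on $|\tau|_{g_A}$ combined with the coarea formula gives $\int_0^{\delta}\bigl(\int_{\partial B_r(P)}|\tau|_{g_A}\bigr)dr<\infty$ for each $P$, from which I extract a sequence $\epsilon_k\to 0$ along which $\epsilon_k\int_{\partial B_{\epsilon_k}(P)}|\tau|_{g_A}\to 0$ simultaneously at every $P\in B$. Combining this with dominated convergence for the left-hand side along $\epsilon_k$ yields $\int_{A}df\wedge\tau=0$ and completes the proof.
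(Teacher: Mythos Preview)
Your argument is correct and follows essentially the same route as the paper: construct local primitives $f_P$ of $\rho$ with $f_P(P)=0$, cut them off to get a compactly supported representative $\rho'=\rho-df$, apply the defining identity of $\Phi_{A,B}$ to $\rho'$, and then kill the boundary term $\int_{\partial B_\epsilon(P)}f\tau$ using $|f|=O(\epsilon)$ together with a sequence $\epsilon_k\to 0$ extracted from the coarea/$L^1$ bound. The only cosmetic difference is that the paper builds each $f_P$ already compactly supported and treats the points of $B$ one at a time (so no ``simultaneous'' sequence is needed), whereas you first take a primitive and then multiply by a cutoff; the analytic content is identical.
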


\begin{proof} For any point $P\in Z$,
we take a small coordinate neighbourhood
$(A_P,x_{P,1},x_{P,2},x_{P,3})$ of $P$ such that
(i) $P$ corresponds to $(0,0,0)$,
(ii) $A_P\simeq \big\{(x_1,x_2,x_3)\in\real^3\,|\,\sum x_i^2<1\big\}$
by the coordinate system.
Set $\|\vecx_P\|:=\bigl(x_{P,1}^2+x_{P,2}^2+x_{P,3}^2\bigr)^{1/2}$.
Then, there exists a $C^{\infty}$-function $f_P$
on $A_P$ such that
(i) $df_P=\rho$ on $\{\|\vecx_P\|<1/2\}$,
(ii) $f_P(P)=0$,
(iii) $f_P(Q)=0$ for $Q\in \{\|\vecx_P\|>2/3 \}$.
We naturally regard $f_P$ as a $C^{\infty}$-function on $A$.
Then, the following holds:
\begin{align*}
\int_{\Phi_{A,B}([\tau])}
 \rho
& =\int_{\Phi_{A,B}([\tau])}
 \bigg(
 \rho-\sum_{P\in B}df_P
 \bigg)\\
 & =\int_{A}
 \bigg(
 \rho-\sum_{P\in B}df_P
 \bigg)\wedge\tau
=\int_A\rho\wedge\tau
-\sum_P
 \int_Ad(f_P\tau).
\end{align*}
For each $P$, we set $S_P^2(r):=\bigl\{\|\vecx_P\|=r\bigr\}$
with the orientation as the boundary of
$\bigl\{\|\vecx_P\|\leq r\bigr\}$.
Then, we obtain the following
\begin{gather}\label{eq;19.3.1.1}
 \int_Ad(f_P\tau)
=-\lim_{\epsilon\to 0}\int_{S_P^2(\epsilon)}f_P\tau.
\end{gather}
Note that the limit exists
because $d(f_P\tau)=df_P\wedge \tau$ is integrable.
Because $|\tau|_{g_A}$ is $L^1$,
we have
$\int dr
 \int_{S_P^2(r)}|\tau|_{g_A}<\infty$,
and hence there exists a sequence
$r_i\to 0$ such that
$r_i\int_{S_P^2(r_i)}|\tau|_{g_A}\to 0$.
Because $|f_P|=O(\|\vecx_P\|)$,
we obtain
that (\ref{eq;19.3.1.1}) is $0$.
\end{proof}

\subsection{Relation between degrees of mini-holomorphic bundles}\label{subsection;19.8.10.30}

Let $\nbigm$ be as in Section~\ref{section;19.10.19.400}.
We may naturally regard $\nbigm$
as a $3$-dimensional abelian Lie group.
Let $\gbigt$ denote the space of the invariant
vector fields on $\nbigm$.
Let $\gbigt^{\lor}$ denote the space of
the invariant $1$-forms on $\nbigm$.
We have the natural non-degenerate paring
$\gbigt\otimes\gbigt^{\lor}\lrarr\real$.
We have the dual morphism
$\real\lrarr\gbigt^{\lor}\otimes\gbigt$.
Let $\sigma$ denote the image of $1$.
If we take a base $e_i$ $(i=1,2,3)$ of $\gbigt$
and the dual frame $e_i^{\lor}$ $(i=1,2,3)$,
then $\sigma=\sum e_i^{\lor}\otimes e_i$.
For the mini-complex coordinate
$(t,w)$,
we have
$\sigma=dt\otimes\del_t+dw\otimes\del_w+d\wbar\otimes\del_{\wbar}$.

Let $E$ be a vector bundle on $\nbigm\setminus Z$.
Kontsevich and Soibelman \cite{Kontsevich-Soibelman}
introduced the following element:
\[
\int_{\Phi_{\nbigm,Z}(c_1(E))}
 \sigma
\in \gbigt.
\]

\begin{Proposition}\label{prop;19.2.27.21} Let $\varrho=\varrho_0\,dt\,d\wbar$ be a $2$-form on $\nbigm$.
Let $\big(E,\delbar_E\big)$ be a $\varrho$-twisted mini-holomorphic bundle with Dirac type singularity on $(\nbigm;Z)$. Then,
\[
 \int_{\Phi_{\nbigm,Z}(c_1(E))}\sigma
=\frac{1}{\pi}\deg^{\an}(E)\cdot\del_t
-\frac{\rank(E)}{\pi}
\left(
 \bigg(
 \int_{\nbigm}\varrho_0
 \bigg)\del_{w}
+ \bigg(
 \int_{\nbigm}\overline{\varrho_0}
 \bigg)\del_{\wbar}
\right).
\]
In particular,
if $\varrho=0$,
then the following holds:
\[
 \int_{\Phi_{\nbigm,Z}(c_1(E))}\sigma
=\frac{1}{\pi}\deg^{\an}(E)\cdot\del_t.
\]
\end{Proposition}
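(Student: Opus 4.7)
The plan is to represent $c_1(E)$ by an explicit $L^1$ closed $2$-form built from an adapted Hermitian metric, apply the duality result Lemma~\ref{lem;19.2.27.12} to convert the homological pairing into an integral over $\nbigm$, and extract each component of~$\sigma$ by decomposing the curvature according to the mini-complex structure. Since $\sigma = dt\otimes\del_t + dw\otimes\del_w + d\wbar\otimes\del_{\wbar}$, it suffices to compute $\int_{\Phi_{\nbigm,Z}(c_1(E))}\rho$ for the three invariant $1$-forms $\rho = dt, dw, d\wbar$.

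Pick an adapted metric $h$ on $\big(E,\delbar_E\big)$. Lemma~\ref{lem;19.10.18.30} ensures that $|F(\nabla_h)|$, $|\nabla_h\phi_h|$ and $|\phi_h|$ are $L^1$, with singular behaviour $O\big(d(P,\cdot)^{-k}\big)$ for $k=2,2,1$ near each $P\in Z$. The form $\tau := \frac{\sqrt{-1}}{2\pi}\Tr F(\nabla_h)$ then represents $c_1(E)$ and has $|\tau|_{g_{\nbigm}}$ integrable, so Lemma~\ref{lem;19.2.27.12} yields
\[
 \int_{\Phi_{\nbigm,Z}(c_1(E))}\rho \,=\, \int_{\nbigm}\rho\wedge\tau
\]
for every invariant $\rho$. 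Writing $F(\nabla_h) = F_{tw}\,dt\wedge dw + F_{t\wbar}\,dt\wedge d\wbar + F_{w\wbar}\,dw\wedge d\wbar$ with $F_{ab}=[\nabla_{h,a},\nabla_{h,b}]$, the products $dt\wedge F$, $dw\wedge F$, $d\wbar\wedge F$ reduce to $F_{w\wbar}$, $-F_{t\wbar}$, $F_{tw}$ each multiplied by $dt\wedge dw\wedge d\wbar$.

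Using the Hodge stars $\ast dt = \frac{\sqrt{-1}}{2}dw\wedge d\wbar$, $\ast dw = \sqrt{-1}\,dt\wedge dw$, $\ast d\wbar = -\sqrt{-1}\,dt\wedge d\wbar$ together with Lemma~\ref{lem;19.10.20.2}, match the $(1,1)$-, $(0,1),\eta$- and $(1,0),\eta$-components of $F(\nabla_h)-\ast\nabla_h\phi_h$ against $G(h)\,dw\wedge d\wbar$, $\varrho_0\,dt\wedge d\wbar\,\id_E$ and $-\overline{\varrho_0}\,dt\wedge dw\,\id_E$. This gives
\[
 F_{w\wbar} = G(h)+\frac{\sqrt{-1}}{2}\nabla_{h,t}\phi_h,\quad F_{t\wbar} = \varrho_0\id_E-\sqrt{-1}\nabla_{h,\wbar}\phi_h,\quad F_{tw} = -\overline{\varrho_0}\id_E+\sqrt{-1}\nabla_{h,w}\phi_h.
\]
Taking traces and integrating against $dt\wedge dw\wedge d\wbar = -2\sqrt{-1}\dvol_{\nbigm}$, the terms linear in $\nabla_h\phi_h$ vanish by Stokes on $\nbigm\setminus\bigcup_{P\in Z}B_\epsilon(P)$: each boundary integral is $O(\epsilon^{-1})\cdot O(\epsilon^2) = O(\epsilon)\to 0$. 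The remaining three integrals, combined with the prefactor $\frac{\sqrt{-1}}{2\pi}$, collect precisely into the claimed expression.

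The argument has no substantive obstacle; the only technical care is in the sign bookkeeping across the Hodge star and the decomposition~(\ref{eq;19.10.19.3}), and in justifying the Stokes step at the Dirac singularities, both of which are controlled by Lemmas~\ref{lem;19.10.20.2} and~\ref{lem;19.10.18.30}.
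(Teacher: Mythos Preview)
Your proof is correct and follows essentially the same route as the paper: represent $c_1(E)$ by $\frac{\sqrt{-1}}{2\pi}\Tr F(\nabla_h)$ for an adapted metric, invoke Lemma~\ref{lem;19.2.27.12} (using the $L^1$ bounds from Lemma~\ref{lem;19.10.18.30}) to pass to an integral over $\nbigm$, use Lemma~\ref{lem;19.10.20.2} to rewrite the three curvature components $F_{w\wbar}$, $F_{t\wbar}$, $F_{tw}$ in terms of $G(h)$, $\varrho_0$, $\overline{\varrho_0}$ and derivatives of $\phi_h$, and kill the $\Tr\nabla_{h,\kappa}\phi_h$ contributions by Stokes with the boundary estimate $|\phi_h|=O(d^{-1})$. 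The paper's argument is organized identically, only more tersely.
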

\begin{proof}
Let $h$ be an adapted metric of $\big(E,\delbar_E\big)$.
By Lemma \ref{lem;19.2.27.12},
it is enough to prove the following equality:
\begin{gather}
 \frac{\sqrt{-1}}{2}
 \int_{\nbigm}\Tr F(h)\cdot\sigma
=\int_{\nbigm}\Tr G(h)\dvol_{\nbigm}\cdot \del_t\nonumber\\
 \hphantom{\frac{\sqrt{-1}}{2} \int_{\nbigm}\Tr F(h)\cdot\sigma=}{}
-\rank(E)\left(
 \bigg(
 \int_{\nbigm}\varrho_0
 \bigg)\del_{w}
+ \bigg(
 \int_{\nbigm}\overline{\varrho_0}
 \bigg)\del_{\wbar}
\right).\label{eq;18.1.25.1}
\end{gather}
For $\kappa=t,w,\wbar$,
we obtain the following by the Stokes formula
and the estimate $|\phi_{h,Q}|_h=O\bigl(d(P,Q)^{-1}\bigr)$:
\begin{gather}\label{eq;19.3.3.10}
 \int
 \Tr\bigl(
 \nabla_{h,\kappa}\phi_h
 \bigr)\frac{\sqrt{-1}}{2}dt\,dw\,d\wbar
=0.
\end{gather}

Note that
$F(h)_{t\wbar}+\sqrt{-1}\nabla_{\wbar}\phi
=\varrho_0\id_E$
and
$F(h)_{tw}-\sqrt{-1}\nabla_{w}\phi
=-\overline{\varrho_0}\id_E$,
according to Lemma~\ref{lem;19.10.20.2}.
We obtain
\begin{align*}
 \frac{\sqrt{-1}}{2}
 \int \Tr F(h)\,dw\otimes \del_w
& =\int \Tr \bigl(
F(h)_{t\wbar}
+\sqrt{-1}\nabla_{\wbar}\phi
\bigr)\frac{\sqrt{-1}}{2}dt\,d\wbar\,dw
 \otimes\del_w
 \\
& =-\rank(E)
\bigg(\int_{\nbigm}\varrho_0
 \bigg)\del_w.
\end{align*}
Similarly, we obtain
\[
 \frac{\sqrt{-1}}{2}
 \int \Tr F(h)\,d\wbar\otimes \del_{\wbar}
=-\rank(E)
\bigg(\int_{\nbigm}\overline{\varrho_0}
 \bigg)\del_{\wbar}.
\]

We also obtain the following from (\ref{eq;19.3.3.10}):
\begin{align*}
 \frac{\sqrt{-1}}{2}
 \int\Tr F(h)_{w\wbar}\,dw\,d\wbar\,dt\otimes\del_t
& =\int
 \Tr
 \left(
F(h)_{w\wbar}
 -\frac{\sqrt{-1}}{2}\nabla_{h,t}\phi_h
 \right)
 \frac{\sqrt{-1}}{2}\,dw\,d\wbar\,dt\,\otimes\del_t
\\
 & =\bigg(
 \int_{\nbigm} \Tr G(h)
 \bigg)\del_t.
\end{align*}
Thus, we obtain~(\ref{eq;18.1.25.1}), and the proof of Proposition \ref{prop;19.2.27.21} is completed.
\end{proof}

\begin{Remark}As explained in Section~\ref{subsection;19.3.2.20}, Kontsevich and Soibelman~\cite{Kontsevich-Soibelman} formulated the stability condition for mini-holomorphic bundles in terms of the coefficient of $\del_t$ in $\int_{\Phi_Z(c_1(E))}\sigma$.
\end{Remark}

\subsection*{Acknowledgements}

I thanks Maxim Kontsevich and Yan Soibelman for the communications and for sending the preprint~\cite{Kontsevich-Soibelman}. Indeed, this study grew out of my answer to one of their questions.
They also kindly suggested that there should be a generalization to the twisted case. I hope that this would be useful for their big project. I owe much to Carlos Simpson whose ideas on the Kobayashi--Hitchin correspondence are fundamental in this study. I thank Masaki Yoshino for discussions. I thank the referees for their careful readings and valuable comments.

I am grateful to the organizers of the conference ``Integrability, Geometry and Moduli'' to ce\-lebrate 60th birthday of Motohico Mulase. The twisted version of the equivalences was explained in my talk at the conference.

It is my great pleasure to dedicate this paper to Motohico Mulase with appreciation to his friendly encouragements and suggestions on many occasions.

I am partially supported by
the Grant-in-Aid for Scientific Research~(S) (No.~17H06127),
the Grant-in-Aid for Scientific Research~(S) (No.~16H06335),
and the Grant-in-Aid for Scientific Research~(C) (No.~15K04843),
Grant-in-Aid for Scientific Research~(C) (No.~20K03609),
Japan Society for the Promotion of Science.

\pdfbookmark[1]{References}{ref}
\LastPageEnding

\end{document}